\newcommand{\be}{\begin{equation}}
\newcommand{\ee}{\end{equation}}
\newcommand{\ba}{\begin{array}}
\newcommand{\ea}{\end{array}}
\newcommand{\bea}{\begin{eqnarray}}
\newcommand{\eea}{\end{eqnarray}}
\newcommand{\bee}{\begin{eqnarray*}}
\newcommand{\eee}{\end{eqnarray*}}
\newcommand{\tr}{{\rm Tr}}
\newcommand{\mass}{{\bf M}}
\newtheorem{thm}{Theorem}
\newtheorem{lemma}{Lemma}
\newtheorem{remark}{Remark}
\newtheorem{conjecture}{Conjecture}
\newtheorem{claim}{Claim}
\newtheorem*{remark*}{Remark}
\numberwithin{equation}{section}
\def\section{\@startsection{section}{1}%
  \z@{1.5\linespacing\@plus\linespacing}{.5\linespacing}%
  {\normalfont\bfseries\large\centering}}
\newcommand{\R}{\mathbb{R}}
\newcommand{\Z}{\mathbb{Z}}
\newcommand{\C}{\mathbb{C}}
\newcommand{\N}{\mathbb{N}}
\newcommand{\pt}{\partial}
\renewcommand{\leq}{\leqslant}
\renewcommand{\geq}{\geqslant}
\newcommand{\Dcou}{\mathcal{D}}
\newcommand{\weakto}{\rightharpoonup}
\newcommand{\wto}{\weakto}
\newcommand{\ii}{\infty}
\newcommand\1{{\ensuremath {\mathds 1} }}
\newcommand{\eps}{\varepsilon}
\newcommand{\cM}{\mathcal M}
\newcommand{\cE}{\mathcal E}
\newcommand{\Mcrit}{M_{\rm c}}
\newcommand\pscal[1]{{\ensuremath{\left\langle #1 \right\rangle}}}
\newcommand{\norm}[1]{ \left| \! \left| #1 \right| \! \right| }
\begin{document}

\title[On Singularity formation for the $L^2$-critical Boson star equation]{On singularity formation for \\ the $L^2$-critical Boson star equation}

\author[E. Lenzmann]{Enno LENZMANN}
\address{Department of Mathematical Sciences, University of Copenhagen, Universitetspark 5, 2100 Copenhagen \O, Denmark.}
\email{lenzmann@math.ku.dk}

\author[M. Lewin]{Mathieu LEWIN}
\address{CNRS \& Laboratoire de Mathématiques (UMR 8088), Université de Cergy-Pontoise, F-95000 Cergy-Pontoise, France.}
\email{Mathieu.Lewin@math.cnrs.fr}

\begin{abstract}
We prove a general, non-perturbative result about finite-time blowup solutions for the $L^2$-critical boson star equation
$$
i \partial_t u = \sqrt{-\Delta+m^2} \, u - ( |x|^{-1} \ast |u|^2  ) u,
$$
in $d=3$ space dimensions. Under the sole assumption that $u=u(t,x)$ blows up in the energy space $H^{1/2}$ at finite time $0 < T < +\infty$, we show that $u(t,\cdot)$ has a unique weak limit in $L^2$ and that $|u(t,\cdot)|^2$ has a unique weak limit in the sense of measures as $t \to T^-$.  Moreover, we prove that the limiting measure exhibits minimal mass concentration. A central ingredient used in the proof is a ``finite speed of propagation'' property, which puts a strong rigidity on the blowup behavior of $u=u(t,x)$. 

As the second main result, we prove that any radial finite-time blowup solution $u=u(t,|x|)$ converges {\em strongly} in $L^2(\{ |x| \geq R \})$ as $t \to T^-$ for any $R > 0$. For radial solutions, this result establishes a large data blowup conjecture for the $L^2$-critical boson star equation.

We also discuss some extensions of our results to other $L^2$-critical theories of gravitational collapse, in particular to critical Hartree-type equations.
\end{abstract}

\maketitle

\section{Introduction and Main Results}

In this paper, we are interested in deriving general, non-perturbative results on singularity formation (blowup) for the $L^2$-critical boson star equation.  The corresponding Cauchy problem is given by
\begin{equation} \label{eq:pde}
\left \{ \begin{array}{ll} i\, \pt_t u = \sqrt{-\Delta+m^2} \, u - ( |x|^{-1} \ast |u|^2  ) u,  & (t,x) \in [0,T) \times \R^3, \\[1ex]
u(0,x) = u_0(x), \quad u_0 : \R^3 \to \C. \end{array} \right .
\end{equation}
Here $m \geq 0$ is a physical constant, the operator $\sqrt{-\Delta+m^2}$ is defined via its symbol $\sqrt{\xi^2 + m^2}$ in Fourier space, and $\ast$ denotes convolution of functions on $\R^3$. With regard to physical applications, we mention that the nonlinear evolution problem \eqref{eq:pde} plays a central role in the theory of the gravitational collapse of boson stars; see the recent works \cite{ElSc07,ElMi10} for the rigorous derivation of \eqref{eq:pde} from many-body quantum mechanics; see also the seminal work \cite{LiTh84,LiYa87} in the time-independent setting.

Let us briefly recall the known facts concerning the well-posedness and blowup for the $L^2$-critical boson star equation~\eqref{eq:pde}. From \cite{Le07} we have that the Cauchy problem \eqref{eq:pde} is locally well-posed for initial data $u_0$ in the Sobolev space $H^{s}(\R^3)$ for any $s \geq 1/2$. In particular, we have the fundamental blowup criterion that if $u=u(t,x)$ cannot be extended to all times $t \geq 0$, then the solution exhibits finite-time blowup in $H^{1/2}(\R^3)$ meaning that
\begin{equation} \label{eq:blowup}
\| u(t,\cdot) \|_{H^{1/2}} \to +\infty \quad \mbox{as} \quad t \to T^-,
\end{equation}
for some $0 < T < +\infty$. Moreover, along its time interval of existence, the solution $u=u(t,x)$ satisfies the conservation laws of energy and $L^2$-mass which are given by
\begin{equation}
 E[u] = \frac{1}{2} \int_{\R^3} \overline{u} \sqrt{-\Delta+m^2} \, u - \frac{1}{4} \int_{\R^3} ( |x|^{-1} \ast |u|^2 ) |u|^2 ,
\label{eq:def_energy}
\end{equation}
\begin{equation}
 M[u] = \int_{\R^3} |u|^2.
\end{equation}
Similar to $L^2$-critical NLS (see, e.\,g., \cite{We86}), we can combine the conservation laws for $E[u]$ and $M[u]$ with an interpolation inequality to obtain the following sufficient condition for global well-posedness in energy space: If the initial datum $u_0 \in H^{1/2}(\R^3)$ satisfies the smallness condition
\begin{equation} \label{eq:glo}
M[u_0] < \Mcrit ,
\end{equation}   
then the corresponding solution $u=u(t,x)$ of \eqref{eq:pde} extends to all times $t \geq 0$. Here $\Mcrit > 0$ is the so-called {\em ``critical (or minimal) mass''}  and it is given by
\begin{equation}
\Mcrit = \| Q \|_{L^2}^2 
\end{equation}  
where $Q =Q(|x|)  \in H^{1/2}(\R^3)$ is a radial ground state solution of the nonlinear equation 
\begin{equation} 
\sqrt{-\Delta} \, Q + Q - \big ( |x|^{-1} \ast |Q|^2 \big ) Q = 0 ,
\label{eq:def_Q}
\end{equation}
see Appendix \ref{app:Mcrit} for more details. Equivalently, we can define $\Mcrit> 0$ as the unique positive number such that 
\begin{equation}
\inf_{\substack{u\in H^{1/2}(\R^3)\\ M[u]=\lambda}}E[u]\;\begin{cases}
\geq0&\text{if $\lambda\leq\Mcrit$,}\\
=-\ii & \text{if $\lambda>\Mcrit$.}
\end{cases}
\label{eq:energy_critical_mass} 
\end{equation}
Note that the global well-posedness criterion \eqref{eq:glo} is independent of the physical constant $m \geq 0$. 

Regarding the existence of finite-time blowup solutions for \eqref{eq:pde}, we mention the following criterion that was proven in \cite{FrLe07}: If the initial datum $u_0 = u_0(|x|) \in C^\infty_0(\R^3)$ is radially symmetric and has negative energy
 \begin{equation} \label{ineq:Eneg}
E[u_0] < 0,
\end{equation}
then the corresponding solution $u=u(t,x)$ blows up in finite time. By \eqref{eq:energy_critical_mass}, we note that condition \eqref{ineq:Eneg} implies that $u_0$ satisfies the strict inequality $M[u_0] > \Mcrit$. Furthermore, a closer inspection of the arguments in \cite{FrLe07} reveals that the global well-posedness criterion \eqref{eq:glo} is almost optimal in the sense that, for any $\eps > 0$, there exists a finite-time (radial) blowup solution with  $M[u_0] = \Mcrit + \eps$. In particular, this blowup result rigorously justifies (for radial data) the physical intuition that $\Mcrit$ describes the {\em ``Chandrasekhar limiting mass''} of relativistic boson stars. However, we mention that the existence of {\em non-radially symmetric} blowup solutions for the evolution problem \eqref{eq:pde} still presents a major open problem.

\subsection{Nonperturbative description of blowup for the boson star equation~\eqref{eq:pde}}
In the present paper, we derive general properties of finite-time blowup solutions for the $L^2$-critical boson star equation~\eqref{eq:pde}. Our first main result  deals with blowup solutions without any symmetry assumption.
\begin{thm}[\bf General Case]\label{thm:nonrad}

Suppose that $u \in C^0 ([0,T), H^{1/2}(\R^3))$ solves \eqref{eq:pde} with $m\geq0$, and blows up at finite time $0 < T < +\infty$. Then the following holds.
\begin{enumerate}

\smallskip

\item[$(i)$] {\bf Existence and uniqueness of weak $L^2$-limit:} There exists a unique function $u_* \in L^2(\R^3)$ such that 
$$\mbox{$u(t,\cdot) \weakto u_*$ weakly in $L^2(\R^3)$ as $t \to T^-$}.$$ 
Moreover, we have that $u(t) \to u_*$ strongly in $H^{-1/2}(\R^3)$ as $t \to T^-$.

\smallskip

\item[$(ii)$] {\bf Existence and uniqueness of blowup measure:} There exists a unique, positive, regular Borel measure $\mu \in \mathcal{M}(\R^3)$ such that 
$$
\mbox{$|u(t,\cdot)|^2 \weakto \mu$ weakly in $\mathcal{M}(\R^3)$ as $t \to T^-$},
$$
and we have that $\mu(\R^3) = \int_{\R^3} |u_0|^2$.  

\smallskip

\item[$(iii)$] {\bf Minimal mass concentration point:} There is some point $x_*  \in \R^3$ such that 
$$
\mu(\{ x_* \}) \geq \Mcrit,
$$
where $\mu \in \mathcal{M}(\R^3)$ is given in $(ii)$. 
\end{enumerate}
\end{thm}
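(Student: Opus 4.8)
The plan is to extract the three pieces of information by combining compactness (for existence of limits), conservation laws (for identifying total mass), and a concentration-compactness / ``finite speed of propagation'' argument (for the minimal-mass bubble).

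For part $(i)$, since $M[u(t)] = M[u_0]$ is conserved, the family $\{u(t,\cdot)\}_{t<T}$ is bounded in $L^2(\R^3)$, so along any sequence $t_n \to T^-$ there is a weakly convergent subsequence. The issue is uniqueness of the weak limit. Here I would use the equation: for a fixed test function $\varphi \in C_0^\infty(\R^3)$, consider $g_\varphi(t) = \langle u(t), \varphi \rangle$ and estimate $\frac{d}{dt} g_\varphi(t) = -i \langle \sqrt{-\Delta+m^2}\, u, \varphi\rangle + i \langle (|x|^{-1}\ast|u|^2)u, \varphi\rangle$. The first term is controlled in $H^{-1/2}$ by $\|u\|_{L^2}$ times $\|\varphi\|_{H^{1/2}}$, which is bounded; the nonlinear term needs a Hardy/Hardy--Littlewood--Sobolev bound of the form $\|(|x|^{-1}\ast|u|^2)u\|_{L^{6/5}} \lesssim \||x|^{-1}\ast|u|^2\|_{L^6}\|u\|_{L^{3/2}}\lesssim \|u\|_{L^2}^3$ (or the analogous $H^{-1/2}$-type estimate), again bounded in $t$. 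Hence $t \mapsto u(t)$ is (uniformly) Lipschitz — or at least uniformly continuous — from $[0,T)$ into $H^{-1/2}(\R^3)$, so it extends continuously to $t=T$, giving strong convergence in $H^{-1/2}$ to some $u_* \in H^{-1/2}$. Combined with boundedness in $L^2$, this forces $u_* \in L^2$ and $u(t) \weakto u_*$ weakly in $L^2$, with a \emph{unique} limit.

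For part $(ii)$, boundedness of $|u(t,\cdot)|^2$ in $\mathcal M(\R^3)$ (total mass $= M[u_0]$) gives weak-$*$ precompactness, so it suffices to show the limit is unique. The key point is that $u(t)\to u_*$ strongly in $H^{-1/2}$ and weakly in $L^2$ already pins down the \emph{absolutely continuous part / low-frequency behavior} of the limiting measure; the possible non-uniqueness could only come from mass escaping to a defect measure concentrated at points or at infinity. To rule this out I would again use the equation to control the ``local mass'' $\int \chi_R(x)|u(t,x)|^2\,dx$ for cutoffs $\chi_R$: differentiating in time and using the commutator structure of $\sqrt{-\Delta+m^2}$ with a smooth cutoff (the relevant commutator is a bounded, in fact smoothing-by-one-derivative operator with a quantitative kernel bound — this is essentially the ``finite speed of propagation'' mechanism advertised in the abstract) shows that $\frac{d}{dt}\int \chi_R |u(t)|^2$ is bounded uniformly in $t<T$, so $\int\chi_R|u(t)|^2$ converges as $t\to T^-$ for every such $\chi_R$. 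Since these integrals determine the measure, $\mu$ is unique, and testing against $\chi_R \nearrow 1$ together with monotone convergence plus the no-loss-of-mass statement (again from the uniform control) gives $\mu(\R^3) = M[u_0]$.

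For part $(iii)$, this is where the main work lies. I would take a blowup sequence $t_n \to T^-$ and apply a profile decomposition / concentration-compactness lemma adapted to the $\dot H^{1/2}$-critical scaling $u\mapsto \lambda^{2}u(\lambda x)$: write $u(t_n,x) = \sum_j \lambda_n^{(j)\,-2} V^{(j)}\big((x-x_n^{(j)})/\lambda_n^{(j)}\big) + (\text{remainder})$, where $\|u(t_n)\|_{H^{1/2}}\to\infty$ forces at least one bubble to have $\lambda_n^{(j)}\to 0$. Using energy conservation $E[u(t_n)] = E[u_0]$ (bounded) together with the sharp Gagliardo--Nirenberg-type inequality attached to the ground state $Q$ — namely that the functional $\frac14\int(|x|^{-1}\ast|u|^2)|u|^2$ is bounded by $\big(M[u]/\Mcrit\big)$ times the kinetic energy, with equality only for $Q$ — one shows the concentrating bubble must carry $L^2$-mass at least $\Mcrit$ in the limit; the mass of a bubble with $\lambda_n\to 0$ localizes to a single point $x_*$ (up to extracting a further subsequence so that $x_n^{(j)}\to x_*$), which contributes an atom of size $\geq \Mcrit$ to $\mu$. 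The uniqueness of $\mu$ from part $(ii)$ then removes the dependence on the subsequence. The delicate points I anticipate are: (a) controlling the nonlocal term $(|x|^{-1}\ast|u|^2)|u|^2$ under the profile decomposition, since the convolution kernel is not scale-invariant only up to the good power and cross terms between bubbles at different scales/centers must be shown to vanish; (b) making sure no mass is ``lost to infinity'' or smeared out, which is precisely what the finite-speed-of-propagation estimate from $(ii)$ is there to prevent; and (c) the fact that $Q$ is only known to be a \emph{radial} ground state, so the sharp constant and the rigidity of the optimizer must be invoked carefully. Step (iii), and specifically obtaining the lower bound $\geq\Mcrit$ (rather than merely $>0$) for the concentrated mass, is the main obstacle.
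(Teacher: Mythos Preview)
Your strategy for parts $(i)$ and $(ii)$ is essentially the paper's. For $(i)$ the paper bounds $\partial_t u$ in $H^{-1}$ (not $H^{-1/2}$ directly): the nonlinear term is handled by Minkowski plus Hardy's inequality, giving $\|V_u\phi\|_{L^2}\leq 4\|\nabla\phi\|_{L^2}\|u_0\|_{L^2}^2$, and then one interpolates $L^\infty_t L^2\cap W^{1,\infty}_t H^{-1}\hookrightarrow C^0_t H^{-1/2}$. Your specific bound $\|(|x|^{-1}\ast|u|^2)u\|_{L^{6/5}}\lesssim\|u\|_{L^2}^3$ does not work as written (it would need $\|u\|_{L^{3/2}}\lesssim\|u\|_{L^2}$ on $\R^3$), so the $H^{-1}$ route is the clean one. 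For $(ii)$ the paper uses exactly the Calder\'on commutator estimate $\|[\sqrt{-\Delta+m^2},\chi]\|_{L^2\to L^2}\lesssim\|\nabla\chi\|_{L^\infty}$ to get $\big|\frac{d}{dt}\int\chi|u(t)|^2\big|\lesssim\|\nabla\chi\|_{L^\infty}$, plus a separate tightness lemma (again from this bound, with $\chi=\chi(\cdot/R)$) to secure $\mu(\R^3)=M[u_0]$.

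For part $(iii)$ your route differs. The paper does \emph{not} run a multi-scale Bahouri--G\'erard--Keraani decomposition on $u(t_n)$; instead it first rescales by the single blowup factor $\sigma_n=\||\nabla|^{1/2}u(t_n)\|_{L^2}^2$ to obtain $v_n(x)=\sigma_n^{-3/2}u(t_n,\sigma_n^{-1}x)$ normalized in $\dot H^{1/2}$ with $\mathcal E(v_n)\to 0$, and then applies a \emph{translation-only} profile decomposition in $H^{1/2}(\R^3)$ to show that the maximal weak-limit mass satisfies $\mass(\{v_n\})\geq\Mcrit$ (via the sharp interpolation inequality, exactly as you outline). This single-scale reduction avoids the cross-scale bookkeeping you flag in (a); the price is a separate argument (tightness again) showing the concentration center $y(t)$ stays bounded and hence subconverges to some $x_*\in\R^3$, which is your point (b). Your multi-scale approach should also go through, but note the $L^2$-invariant scaling here is $u\mapsto\lambda^{3/2}u(\lambda\cdot)$, not $\lambda^2 u(\lambda\cdot)$; since the problem is $L^2$-critical, rescaling once and working at a fixed scale is both sufficient and simpler.
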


\noindent
Let us emphasize that our result covers the zero-mass case $m=0$. We now make the following comments about this result.

\subsubsection*{Improvement for Radial Data}
As our second main result, we will obtain a considerable strengthening of Theorem 1 for radially symmetric blowup solutions, see Theorem \ref{thm:rad} below.

\subsubsection*{Generalizations}
The arguments to prove Theorem \ref{thm:nonrad} can be readily extended to the following generalization of \eqref{eq:pde} given by 
\begin{equation} \label{eq:pde2}
i \partial_t u = L u + (\Phi \ast |u|^2) u \quad \mbox{in $\R^d$ with $d \geq 3$}. 
\end{equation}
Here $L$ is a self-adjoint pseudo-differential operator of order 1 satisfying suitable properties. For instance, it is sufficient to assume that $L$ if of the form $L=L(i\nabla)$ with real-valued symbol $\widehat{L}(\xi)$ such that 
\begin{equation}
\nabla_{\xi}\widehat{L} \in L^\infty, \quad  A |\xi| \leq \widehat{L}(\xi) \leq B |\xi| \quad \mbox{for $|\xi| \geq C$}, 
\end{equation}
with some constants $A, B, C > 0$. Moreover, the convolution kernel $\Phi = \Phi(x)$ is supposed to be of the form 
\begin{equation}
 \Phi(|x|) = - \frac{1}{|x|}+ w(x),
\end{equation}
where $w \in L^\infty(\R^3)$ is a real-valued and even function satisfying the decay estimate $|w(x)| \leq C(1+|x|)^{-1}$ for $x \in \R^d$, with some constant $C >0$.

Another (and physically important) generalization of \eqref{eq:pde} are {\em Hartree equations,} which we discuss in some detail at the end of this section. See also Theorem \ref{thm:Hartree} below. 

\subsubsection*{Minimal Mass Concentration}
From $(ii)$ and $(iii)$ in Theorem \ref{thm:nonrad} above, we conclude that if $x(t) \to x_*$ as $t \to T$, then
\begin{equation}
\forall  R > 0  : \quad \liminf_{t \to T^-} \int_{|x-x(t)| \leq R} |u(t,x)|^2  \geq \Mcrit .
\end{equation}
Such a phenomenon of {\em ``minimal mass concentration''} was already proved for \eqref{eq:pde} for radially symmetric solutions in \cite{FrLe07}. It is also known to hold for $L^2$-critical nonlinear Schrödinger equation (see \cite{MeTs90,Na90,We86,HmKe05}), where $x=x(t)$ is some appropriate function. In this latter case, proving that $x(t)$ tends to some finite limit $x_* \in \R^3$ as $t \to T$ is much more complicated. This has only been resolved so far for radially symmetric solutions (see, e.\,g., \cite{MeTs90}) and for blowup solutions with $L^2$-mass close to the critical mass; see \cite{MeRa05} and references therein.

\subsubsection*{Blowup Conjecture}
It would be highly desirable to describe the singular part of the blowup measure $\mu \in \mathcal{M}(\R^3)$ in more detail. Here, it seems natural to expect that the singular part of $\mu$ is a finite sum of Dirac measures with each having at least mass $\Mcrit$. Furthermore, we expect that $u(t,\cdot)$ converges strongly in $L^2$ outside the singular set  of $\mu$. To formalize this remark, we state the following general blowup conjecture, which is inspired of a famous similar conjecture by Merle and Rapha\"el in \cite{MeRa05} formulated for the $L^2$-critical nonlinear Schrödinger equation (NLS).    
 
\begin{conjecture}
Suppose that $u \in C^0([0,T); H^{1/2}(\R^3))$ solves \eqref{eq:pde} with $m \geq 0$ and blows up at finite time $0 < T < +\infty$. Let $u^*\in L^2(\R^3)$ and $\mu\in \cM(\R^3)$ be as in Theorem~\ref{thm:nonrad}. Then there exist finitely many points $\{ x_1, \ldots, x_L \} \subset \R^3$ with $1 \leq L \leq \frac{\int |u_0|^2}{\Mcrit}$ such that, as $t \to T^-$, the following holds:
$$
\mbox{$u(t,\cdot) \to u_*$ strongly in $L^2 \left ( \R^3 \setminus \bigcup_{1 \leq i \leq L} B(x_i,R) \right)$ for all $R >0$,}
$$
and
$$
|u(t,\cdot)|^2 \weakto \mu = \sum_{1 \leq i \leq L} M_i \delta_{x=x_i} + |u_*|^2\ \text{weakly in $\cM(\R^3)$, with}\ M_i \geq  \Mcrit.
$$
\end{conjecture}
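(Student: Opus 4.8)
The plan is to work with the defect measure $\nu := \mu - |u_*|^2$. Since $u(t)\weakto u_*$ weakly in $L^2(\R^3)$, weak lower semicontinuity of $f\mapsto\int\phi|f|^2$ ($\phi\geq0$) gives $|u_*|^2\leq\mu$, so $\nu$ is a nonnegative finite Borel measure with $\nu(\R^3)=M[u_0]-\|u_*\|_{L^2}^2$ by Theorem~\ref{thm:nonrad}(ii). I claim the conjecture is equivalent to the single structural statement that \emph{$\nu$ is purely atomic with every atom of mass $\geq\Mcrit$}. Indeed $|u_*|^2$ is absolutely continuous, so the atoms of $\mu$ coincide with those of $\nu$; writing them as $x_1,\dots,x_L$ and $M_i:=\mu(\{x_i\})=\nu(\{x_i\})$, the structural statement gives $\mu=|u_*|^2+\sum_{i=1}^L M_i\delta_{x_i}$, forces $L\leq M[u_0]/\Mcrit$ (the atoms split the finite total mass $\mu(\R^3)=M[u_0]$ into chunks $\geq\Mcrit$), and gives $L\geq1$ via Theorem~\ref{thm:nonrad}(iii) (were $\nu=0$, then $\mu=|u_*|^2$ would be absolutely continuous, contradicting $\mu(\{x_*\})\geq\Mcrit$). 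Strong $L^2$-convergence off the atoms is then soft: for $R>0$ pick smooth $0\leq\phi\leq1$ with $\phi\equiv1$ on $\R^3\setminus\bigcup_iB(x_i,R)$ and $\phi$ vanishing near each $x_i$; since $\mu(\R^3)=M[u_0]=\|u(t)\|_{L^2}^2$ no mass escapes to spatial infinity, so $|u(t)|^2dx\to\mu$ against bounded continuous functions and $\int\phi|u(t)|^2\to\int\phi\,d\mu=\int\phi|u_*|^2$; combined with $\int\phi\,u(t)\overline{u_*}\to\int\phi|u_*|^2$ (weak $L^2$-convergence tested against $\phi u_*\in L^2$) this yields $\int\phi|u(t)-u_*|^2\to0$.

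\textbf{The key local estimate.} Everything thus reduces to proving: \emph{if $x_0\in\R^3$ and $r>0$ are such that $\limsup_{t\to T}\int_{B(x_0,2r)}|u(t)|^2<\Mcrit$, then $\nu(B(x_0,r))=0$.} Granting this, any point $y$ with $\mu(\{y\})<\Mcrit$ lies outside $\mathrm{supp}\,\nu$ (choose $\rho$ small with $\mu(\partial B(y,2\rho))=0$ and $\mu(B(y,2\rho))<\Mcrit$; then $\limsup_t\int_{B(y,2\rho)}|u(t)|^2=\mu(B(y,2\rho))<\Mcrit$, so $\nu(B(y,\rho))=0$), whence $\mathrm{supp}\,\nu\subseteq\{y:\mu(\{y\})\geq\Mcrit\}$ --- a finite set, since $\mu$ is finite --- and $\nu$ is purely atomic with atoms of mass $\geq\Mcrit$, as needed. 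To prove the local estimate, fix such $x_0,r$; the first and main goal is the \emph{uniform local bound} $\limsup_{t\to T}\|u(t)\|_{H^{1/2}(B(x_0,r))}<\infty$. For this I would track along \eqref{eq:pde} a localized energy functional of $\psi u$, with $\psi$ smooth, $\psi\equiv1$ on $B(x_0,r)$ and $\mathrm{supp}\,\psi\subset B(x_0,2r)$ (working in practice with an order-one weighted momentum/virial functional in the spirit of \cite{FrLe07}, the naive localized kinetic energy being too singular for merely $H^{1/2}$ data). The localized Hartree term satisfies the sharp inequality
\[
\int_{\R^3}\bigl(|x|^{-1}\ast|\psi u|^2\bigr)|\psi u|^2\;\leq\;\frac{2\,\|\psi u\|_{L^2}^2}{\Mcrit}\,\bigl\langle\psi u,\sqrt{-\Delta}\,\psi u\bigr\rangle,
\]
whose constant is attained by the ground state $Q$, and $\limsup_{t\to T}\|\psi u(t)\|_{L^2}^2\leq\limsup_{t\to T}\int_{B(x_0,2r)}|u(t)|^2<\Mcrit$; hence the localized energy controls the localized kinetic energy with a strictly positive coefficient, and it suffices to show the localized energy stays bounded on a terminal interval $[T-\delta,T)$. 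Its time-derivative involves commutators of $\sqrt{-\Delta+m^2}$ with $\psi$, the mismatch between $|x|^{-1}\ast|\psi u|^2$ and $\psi^2(|x|^{-1}\ast|u|^2)$, and boundary contributions supported on $\mathrm{supp}\,\nabla\psi$, and this is where the \emph{finite speed of propagation} of $\sqrt{-\Delta+m^2}$ --- quantified as fast decay of its kernel and of its propagator outside the light cone --- is invoked to dominate all of these by the conserved $\|u(t)\|_{L^2}$ together with the mass of $u(t)$ in the annulus $B(x_0,2r)\setminus B(x_0,r)$, which stays bounded since that annulus is at positive distance from every atom. A Gronwall argument in $t$ then closes the uniform bound.

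\textbf{From the bound to strong convergence.} Once $\limsup_{t\to T}\|u(t)\|_{H^{1/2}(B(x_0,r))}<\infty$ is known, the conclusion $\nu(B(x_0,r))=0$ is automatic. Indeed, along any $t_n\to T$ the functions $u(t_n)$ are bounded in $H^{1/2}(B(x_0,r))$, so by the compact embedding $H^{1/2}(\Omega)\hookrightarrow\hookrightarrow L^2(\Omega)$ for bounded $\Omega$ a subsequence converges strongly in $L^2(B(x_0,r))$; the limit is $u_*$ because $u(t)\weakto u_*$ weakly in $L^2(\R^3)$, and by uniqueness of $u_*$ (Theorem~\ref{thm:nonrad}(i)) the whole family converges, $u(t)\to u_*$ strongly in $L^2(B(x_0,r))$. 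Then $|u(t)|^2\to|u_*|^2$ in $L^1(B(x_0,r))$, which together with $|u(t)|^2\weakto\mu$ gives $\mu=|u_*|^2$ on $B(x_0,r)$, i.e. $\nu(B(x_0,r))=0$.

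\textbf{Main obstacle.} The hard part is closing the localized energy estimate \emph{uniformly} up to $t=T$ under the sole hypothesis $u\in H^{1/2}$. The obvious localized kinetic energy $\langle\psi u,\sqrt{-\Delta+m^2}\psi u\rangle$ differentiates into a term pairing $\sqrt{-\Delta+m^2}u$ --- which need not lie in $L^2$ when $u\in H^{1/2}$ --- against the commutator $[\sqrt{-\Delta+m^2},\psi]$, so one is forced to a milder weighted momentum functional and a delicate commutator analysis, and the genuinely hard point is to absorb the tails of these \emph{nonlocal} commutators into the conserved mass via finite speed of propagation, uniformly on $[T-\delta,T)$. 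This is precisely where extra structure is currently needed, and it is why only the radial case is resolved here (Theorem~\ref{thm:rad}): by rotational symmetry and finiteness of $M[u_0]$, $\mu$ can have an atom only at the origin (concentration on a sphere of positive radius would demand infinitely many $\Mcrit$-bubbles, hence infinite mass), so one needs the local estimate only on the exterior $\{|x|\geq R\}$, where the radial structure of $\sqrt{-\Delta+m^2}$ makes the commutator and finite-speed-of-propagation bookkeeping tractable.
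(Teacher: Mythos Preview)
The statement you are attempting is \emph{Conjecture 1}, which the paper does \emph{not} prove in general; it proves only the radial case (Theorem~\ref{thm:rad}). So there is no ``paper's own proof'' to compare against for the full statement. Your reduction to the defect measure $\nu=\mu-|u_*|^2$ and the equivalence with ``$\nu$ purely atomic with atoms $\geq\Mcrit$'' is clean and correct, as is the soft argument deducing strong $L^2$-convergence off the atoms from narrow convergence plus weak $L^2$-convergence. But you yourself identify that the proof does not close: the ``key local estimate'' hinges on a uniform local $H^{1/2}$-bound under the subcritical-mass hypothesis, and your sketch of a localized energy/virial argument with commutator control does not provide one. In particular, the time derivative of any localized kinetic quantity will produce a term of the schematic form $\langle \sqrt{-\Delta+m^2}\,u,\,[\sqrt{-\Delta+m^2},\psi^2]\,u\rangle$, and neither Calder\'on's commutator bound nor finite-speed-of-propagation estimates for the free propagator are known to absorb this uniformly in $t$ when $\|u(t)\|_{H^{1/2}}\to\infty$. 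This is exactly the open problem; your write-up is a correct reduction followed by an honest acknowledgement of the gap, not a proof.

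Your description of the radial mechanism, however, does not match the paper. The proof of Theorem~\ref{thm:rad} does \emph{not} proceed via a localized $H^{1/2}$-bound or a localized energy functional, and the key input is not ``the radial structure of $\sqrt{-\Delta+m^2}$'' making commutator bookkeeping tractable. The actual argument is far more direct: by Newton's theorem, for radial $u$ one has the pointwise bound $\big(|x|^{-1}\ast|u(t)|^2\big)(x)\leq |x|^{-1}\|u_0\|_{L^2}^2$, so the Hartree potential is uniformly bounded on $\{|x|\geq R\}$. One then writes the equation for $u_R=\zeta_R u$ in Duhamel form,
\[
u_R(t)=U(t)u_R(0)-i\int_0^t U(t-s)\big(V_u(s)u_R(s)+[\zeta_R,\sqrt{-\Delta+m^2}]u(s)\big)\,ds,
\]
and observes that both source terms are \emph{uniformly bounded in $L^2$} on $[0,T)$: the first by Newton's bound and mass conservation, the second by the commutator estimate \eqref{eq:Stein}. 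Since $T<\infty$, the Duhamel integral converges in $L^2$ as $t\to T^-$, giving strong $L^2$-convergence of $u(t)$ on $\{|x|\geq R\}$ directly, without ever producing or needing an $H^{1/2}$-bound away from the origin. The structure of $\mu$ then follows. In short, the radial case is resolved by a pointwise estimate on the \emph{potential}, not by finite-speed-of-propagation control of a localized \emph{energy}; and it is precisely the absence of any analogue of Newton's theorem for non-radial data that leaves the general conjecture open.
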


\medskip
\noindent
In fact, we can prove the previous blowup conjecture for radially symmetric solutions. We have the following main result.

\begin{thm}[{\bf Radial Case}] \label{thm:rad}
If $u_0 = u_0(|x|) \in H^{1/2}(\R^3)$ is radially symmetric, then Conjecture 1 holds true. More precisely, with $u_* \in L^2(\R^3)$ and $\mu \in \mathcal{M}(\R^3)$ as in Theorem \ref{thm:nonrad}, we have the following properties. 
\begin{enumerate}
\item[$(i)$] For every $R > 0$, it holds that
$$
\mbox{$u(t,\cdot) \to u_*$ strongly in $L^2\left ( \R^3 \setminus B(0,R) \right)$ as $t \to T^-$.}
$$
\item[$(ii)$] The blowup measure $\mu \in \mathcal{M}(\R^3)$ is given by
$$\mu = M \delta_{x=0} + |u_*|^2, $$ 
with some $M \geq \Mcrit$.
\end{enumerate}
Moreover, if $x \, u_0(|x|) \in H^{1/2}(\R^3)$ then $(i)$ can be replaced by
\begin{enumerate}
\item[$(i')$] For every cutoff function $\chi \in C^\infty(\R^3)$ with $0 \leq \chi \leq 1$ and $\mathrm{supp} \, \chi \subset \R^3 \setminus \{ 0 \}$, it holds that $\chi \, u(t,\cdot) \weakto \chi \, u_*$ weakly in $H^{1/2}(\R^3)$ as $t \to T^-$ and
$$
\mbox{$\chi \, u(t,\cdot) \to \chi \, u_*$ strongly in $H^s(\R^3)$ for $0 \leq s < 1/2$ as $t \to T^-$}.
$$
\end{enumerate}
\end{thm}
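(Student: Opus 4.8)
The plan is to reduce both $(i)$ and $(ii)$ to the single statement that the $L^2$-defect measure
$$
\nu \;\defeq\; \mu - |u_*|^2\,dx \;\geq\; 0
$$
(nonnegative by weak lower semicontinuity and Theorem~\ref{thm:nonrad}) is supported at the origin. First, radial symmetry forces $u_*$ and $\mu$ to be radial, and the only atom a radial finite measure can carry sits at $x=0$; together with part $(iii)$ of Theorem~\ref{thm:nonrad} this already gives $\mu(\{0\})\geq\Mcrit$. If in addition $\mathrm{supp}\,\nu\subseteq\{0\}$, then $\nu = M\delta_{x=0}$ with $M=\mu(\{0\})\geq\Mcrit$ (recall $|u_*|^2\,dx$ has no atom), which is exactly $(ii)$; and then on each exterior $\{|x|\geq R\}$ one has $\mu = |u_*|^2\,dx$, hence $\int_{|x|\geq R}|u(t)|^2 \to \int_{|x|\geq R}|u_*|^2$ as $t\to T^-$, which — combined with the weak convergence $u(t)\weakto u_*$ in $L^2$ from Theorem~\ref{thm:nonrad} — upgrades to strong $L^2(\{|x|\geq R\})$-convergence, i.e.\ $(i)$. (A countable set of radii $R$, where $\mu$ charges the sphere $\{|x|=R\}$, has to be excluded first, and general $R$ recovered by monotone approximation.)

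The heart of the matter is therefore to show that no $L^2$-mass is lost in the weak limit away from the origin, i.e.\ $\nu(\{|x|\geq R\})=0$ for every $R>0$. I would argue by contradiction. If $\nu$ charges some annulus $\{0<a\leq|x|\leq b\}$, pick a cutoff $\chi\in C^\infty_0(\R^3)$ with $\mathrm{supp}\,\chi$ in that annulus and $\nu(\mathrm{supp}\,\chi)>0$; then $\{\chi u(t)\}_{t<T}$ cannot be bounded in $H^{1/2}$, since otherwise tightness of $\{|u(t)|^2\}$ (a consequence of mass conservation together with $\mu(\R^3)=\int|u_0|^2$) and the Rellich--Kondrachov theorem would make $\{\chi u(t)\}$ precompact in $L^2$ with unique limit $\chi u_*$, contradicting $\nu(\mathrm{supp}\,\chi)>0$. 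So the local kinetic energy $\|\chi u(t_n)\|_{\dot H^{1/2}}^2$ blows up along some $t_n\to T^-$. By radial symmetry this excess kinetic energy is spread over a sphere $\{|x|=r_0\}$ with $r_0\geq a>0$, and a direct computation shows that a radial concentration on a positive-radius sphere produces a Hartree self-interaction $\iint |\cdot|^2|\cdot|^2/|x-y|$ that stays \emph{bounded}: the $1/|x-y|$-kernel is effectively averaged out over the sphere and does not exhibit the critical growth it has at a single point. Hence this divergent kinetic energy has nothing to compensate it in the energy functional, forcing $E[u(t_n)]\to+\infty$ and contradicting conservation of the finite energy $E[u_0]$. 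The finite-speed-of-propagation property is what makes this rigorous: it forces any such outward concentration to be created locally, near time $T$, in the exterior region, so that the energy of $u(t_n)$ genuinely decouples — up to cross-terms that vanish because of the decay of the convolution kernel of $\sqrt{-\Delta+m^2}$ and because the $L^1$-norms of the concentrating bumps (both the origin bump and the exterior shell) tend to $0$ — into an ``interior'' part carrying the true blowup and an ``exterior'' part whose kinetic energy cannot be absorbed by its own potential energy. This decoupling is where I expect the main difficulty to lie; everything else in $(i)$--$(ii)$ is soft functional analysis plus the sharp Gagliardo--Nirenberg/Hardy--Littlewood--Sobolev inequality underlying the definition of $\Mcrit$.

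Finally, for $(i')$ I would assume $x\,u_0\in H^{1/2}$ and first propagate this weighted regularity along the flow: $x\,u(t)\in H^{1/2}$ for all $t<T$, by running the local well-posedness theory in the weighted space (the commutator $[\sqrt{-\Delta+m^2},x]$ has order $0$, and the Hartree potential $|x|^{-1}\ast|u|^2$ is bounded on the regions that matter), which yields $\sup_{t<T}\|\chi u(t)\|_{H^{1/2}}<\infty$ for every $\chi\in C^\infty(\R^3)$ with $0\leq\chi\leq1$ and $\mathrm{supp}\,\chi\subset\R^3\setminus\{0\}$. Interpolating this uniform $H^{1/2}$-bound against the strong $L^2(\{|x|\geq R\})$-convergence already obtained in $(i)$ gives $\chi u(t)\to\chi u_*$ strongly in $H^s$ for every $0\leq s<1/2$; and the uniform $H^{1/2}$-bound, together with the uniqueness of the limit just identified, gives $\chi u(t)\weakto \chi u_*$ weakly in $H^{1/2}$. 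Writing an arbitrary cutoff supported away from $0$ as a product with such a $\chi$ then completes the proof of $(i')$.
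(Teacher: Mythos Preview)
Your route to $(i)$--$(ii)$ is genuinely different from the paper's, and it contains a real gap. In your energy-contradiction argument you localize $u=\eta_0 u+\eta_1 u$ with $\eta_1$ supported in the exterior annulus and argue that, if $\|\eta_1 u(t_n)\|_{\dot H^{1/2}}\to\infty$ while (by Newton's theorem) the exterior self-interaction $\Dcou(|\eta_1 u|^2)$ stays $O(1)$, then $E[u(t_n)]\to+\infty$. But the decoupling you outline only gives
\[
E[u(t_n)] \;=\; E[\eta_0 u(t_n)] \;+\; E[\eta_1 u(t_n)] \;+\; O(1),
\]
and nothing prevents $E[\eta_0 u(t_n)]\to-\infty$: since a blowup solution necessarily has $M[u_0]>\Mcrit$, the interior piece $\eta_0 u(t_n)$ may carry supercritical mass, so the Gagliardo--Nirenberg bound gives no lower control on $E[\eta_0 u(t_n)]$. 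Your sentence ``this divergent kinetic energy has nothing to compensate it'' is precisely where the argument breaks.

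What you are missing is that the very fact you invoke for $(i')$ --- that $V_u=|x|^{-1}\ast|u|^2$ is \emph{uniformly bounded} on $\{|x|>R\}$ by Newton's theorem and mass conservation --- already proves $(i)$ directly, without any energy argument. The paper writes the equation satisfied by $u_R:=\zeta_R u$ (with $\zeta_R$ an exterior cutoff) as
\[
i\partial_t u_R \;=\; \sqrt{-\Delta+m^2}\,u_R \;+\; V_u\,u_R \;+\; [\zeta_R,\sqrt{-\Delta+m^2}]\,u,
\]
observes that both inhomogeneous terms are bounded in $L^\infty_t L^2_x$ by $C/R$ (the first by the Newton bound $|V_u(t,x)|\leq \|u_0\|_{L^2}^2/|x|$ on $\mathrm{supp}\,\zeta_R$, the second by Calder\'on's commutator estimate), and then Duhamel with the unitary group $e^{-it\sqrt{-\Delta+m^2}}$ gives strong $L^2$-convergence of $u_R(t)$ as $t\to T^-$. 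Part $(ii)$ then follows from $(i)$ and Theorem~\ref{thm:nonrad} exactly as in your soft argument. For $(i')$ the paper does not propagate $x$-weighted regularity through the local theory; it quotes the virial inequality from \cite{FrLe07} to bound $\sum_j\langle u(t),x_j\sqrt{-\Delta+m^2}\,x_j u(t)\rangle$ uniformly on $[0,T)$, which immediately yields the uniform exterior $H^{1/2}$-bound you need.
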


\medskip\noindent
We make the following comments about this result.

\subsubsection*{Generalization}
We remark that $(i)$ and $(ii)$ of Theorem \ref{thm:rad} can be readily generalized to the evolution problem \eqref{eq:pde2}, {\em provided} that we consider $d=3$ space dimensions. The reason for this restriction stems from the fact that we make use of Newton's theorem, which only holds true for the convolution kernel $|x|^{-1}$ in dimension $d=3$. See below for more details. 

\subsubsection*{Regularity of $u_*$} 
It would be interesting to better understand the regularity properties of the limiting function $u_*$. Under the sole assumption that the radially symmetric initial datum $u_0$ belongs to $H^{1/2}(\R^3)$, we do not know whether $\chi \, u_*$ is  in $H^{1/2}(\R^3)$ for $\chi$ as in $(i')$. But we expect that $(i')$ also holds true in this case. On the other hand, the regularity of $u_*$ at $x=0$ is unclear to us at the present. 

\subsubsection*{Quantization of Blowup Mass and Universilaty of Blowup Profile}
It seems to be a very interesting (but difficult) open problem to understand whether $M \geq \Mcrit$ in $(ii)$ can only attain discrete values. In particular, it seems natural to expect that $M=\Mcrit$ holds in $(ii)$ above, provided that $\Mcrit < \| u_0 \|_{L^2}^2 < \Mcrit + \alpha$ with some universal small constant $\alpha > 0$. Moreover, in analogy to the fundamental result for $L^2$-critical NLS in \cite{MeRa04}, we might in fact expect that the ground state $Q$ provides (modulo symmetries) the {\em universal blowup profile} for data with $L^2$-mass close to $\Mcrit$; i.\,e., 
$$
\mbox{$e^{i \gamma(t)} u(t, \lambda(t) x +x_0(t)) \to Q$ in $L^2_{\mathrm{loc}}(\R^3)$ as $t \to T^-$},
$$ 
where $\lambda(t) = \frac{\| |\nabla|^{1/2} Q \|_{L^2}}{ \| |\nabla|^{1/2} u(t) \|_{L^2}}$, $\gamma(t) \in \R$ and $x_0(t) \in \R^3$  are  modulation parameters corresponding to scaling, phase and translation symmetry, respectively. Note that for radial solutions $u=u(t,|x|)$ we must have  that $x_0(t) \equiv 0$ holds, by Theorem \ref{thm:rad} above.

\bigskip
Let us conclude this section with some comments on the proof of the main results.

In both the proofs of Theorem 1 and 2, we will use the important fact that equation \eqref{eq:pde} exhibits some version of a \emph{finite speed of propagation}. Properties of the same sort were already exploited in different contexts (for instance when proving the existence of global solutions to the defocusing critical wave equation, see, e.g.,~\cite{Struwe-88,Tao-06}), but, to our knowledge, it was never used for blow-up solutions to the Boson star equation \eqref{eq:pde}.

In our context, \emph{finite speed of propagation} has the effect that the variation of mass in any domain stays bounded. More precisely, we show in Lemma~\ref{lem:finite} below that the derivative
$$\frac{d}{dt}\int_{\R^3}\chi |u(t)|^2$$
is uniformly bounded in time for any fixed function $\chi \in W^{1,\infty}(\R^3)$. This fact is itself a simple consequence of a commutator estimate due to Calder\'on~\cite{Calderon-65,St93} given by
\begin{equation}
\| [\sqrt{-\Delta+m^2}, \chi ] \|_{L^2 \to L^2} \lesssim \| \nabla \chi \|_{L^\infty}.
\label{eq:Stein_intro}
\end{equation} 
The fact that the localized mass $\int \chi |u(t)|^2$ cannot vary too rapidly yields the existence of the limiting blowup measure $\mu$. 

To prove that the limiting measure $\mu$ must contain a delta with mass $\geq\Mcrit$, we borrow ideas already used in the context of NLS, see, e.g.,~\cite{MeTs90,Na90,We86,HmKe05}, which are mainly based on concentration-compactness techniques~\cite{Lions-84a,Lions-84b}. However, the extension to the nonlocal pseudodifferential operator $\sqrt{-\Delta+m^2}$ poses some technical difficulties. For this reason, we give a detailed proof and, for the sake of clarity, we even provide in Appendix~\ref{app:CC_frac_Sobolev} a self-contained description of the possible lack of compactness of bounded sequences in fractional Sobolev spaces. 

As we have already remarked, the proof of Theorem~\ref{thm:nonrad} can be extended to a corresponding blowup result for the generalized problem \eqref{eq:pde2} in $d \geq 3$ space dimensions. In particular, the proof does {\em not} hinge on specific properties of $|x|^{-1}$ being the Green's function of some local differential operator. However,  to deal with the radial case in Theorem~\ref{thm:rad}, we use the fact that the convolution of a radial function with $|x|^{-1}$ in $\R^3$ obeys a pointwise estimate, due to Newton's theorem. As explained in Section~\ref{sec:proof_radial} below, the latter fact guarantees that the potential $|x|^{-1}\ast|u(t)|^2$ is uniformly bounded on $\R^3\setminus B(0,R)$ for any $R>0$, whenever $u=u(t,|x|)$ is radially symmetric in $x \in \R^3$. This fact, combined with \eqref{eq:Stein_intro} again, is itself enough to give strong convergence in $L^2(\R^3\setminus B(0,R))$.

\begin{remark}\em
Let us emphasize that the finite speed of propagation is alone not enough to explain why radial solutions can only blow up at the origin (and not on a ring or a disc). It is also important that the gravitational interaction is nonlocal. For instance, for the focusing wave equation (which, of course, exhibits finite speed of propagation), 
\begin{equation} \label{eq:wave}
\left(\frac{\partial^2}{\partial t^2}-\Delta\right) u - |u|^{p-1} u = 0, \quad \mbox{in $\R^3$ where $p > 1$},
\end{equation}
it is well-known that there exist radial solutions $u=u(t,|x|)$ with finite energy, which blow up on the whole disc ${|x| < T}$ as $t \to T^-$, by a simple ODE-type mechanism.\footnote{For the wave equation \eqref{eq:wave} take the initial conditions $u(0,x) = C T^{-\alpha} \phi(x)$ and $\partial_t u(0,x) = \alpha C T^{\alpha-1} \phi(x)$ with $\alpha = \frac{2}{p-1}$ and $C = \left [ \alpha (\alpha +1) \right ]^{\frac{\alpha}{2}}$, where $\phi \in C^\infty_0(\R^3)$ is radial and satisfies $0 \leq \phi \leq 1$ and $\phi(x) \equiv 1$ for $|x| < 2T$.}
Our Theorem~\ref{thm:rad} shows that this phenomenon is impossible for radial solutions to the boson star equation~\eqref{eq:pde}.
\end{remark}

Our method is certainly rather specific to the boson star equation~\eqref{eq:pde}. But, on the other hand, it is based on natural physical properties of the model under consideration (e.\,g., finite speed of propagation reflecting special relativity, Newton's theorem), which lead to a purely nonperturbative result. In the next section, we detail a further generalization of Theorems \ref{thm:nonrad} and \ref{thm:rad} to Hartree equations, which  arise as a model problem for collapsing fermionic relativistic stars.

\subsection{Hartree theory}
The $L^2$-critical boson star equation~\eqref{eq:pde} is one of the simplest (yet quite rich) physical models describing the dynamical collapse of stars. For fermionic stars like neutron stars or white dwarfs, one has to use more complicated models~\cite{LiTh84,LiYa87,FrLe07b,LeLe10,HaiLenLewSch-10}. In this section we consider Hartree-type theories, in which the state of the system is not described by a single function $u\in L^2(\R^3)$, but rather by a nonnegative and self-adjoint trace-class operator $\gamma$ acting on $L^2(\R^3)$. The time evolution of the state $\gamma(t)$ is determined by the von Neumann equation
\begin{equation}
\left \{ \begin{array}{ll} i \,\pt_t \gamma = \big[H_\gamma, \gamma\big],  & 0\leq t<T, \\[1ex]
\gamma(t=0) = \gamma_0\geq0. \end{array} \right.
\label{eq:HF}
\end{equation}
Here $[A,B] \equiv AB-BA$ denotes the commutator of two (possibly unbounded) operators $A$ and $B$ acting on $L^2(\R^3)$, and $H_\gamma$ is the so-called \emph{mean-field operator} defined by
\begin{equation}
H_\gamma=\sqrt{-\Delta+m^2} -  |x|^{-1} \ast \rho_\gamma,
\label{eq:def_H_gamma}
\end{equation}
where $\rho_\gamma(x) = \gamma(x,x) \in L^1(\R^3)$ is the density associated with the trace-class operator $\gamma$, meaning that $\rho_\gamma$ is the unique function in $L^1(\R^3)$ such that (where $\tr( \cdot)$ denotes the trace of an operator):
$$\forall V\in L^\ii(\R^3):  \quad \tr(\gamma V)=\int_{\R^3}\rho_\gamma(x)\,V(x)\,dx.$$

We note that equation~\eqref{eq:HF} generalizes the boson star equation~\eqref{eq:pde}. Indeed, when $\gamma_0$ is the operator with integral kernel $\gamma_0(x,y)=u_0(x)\overline{u_0(y)}$, then the unique solution to~\eqref{eq:HF} is the operator $\gamma(t)$ with kernel $\gamma(t,x,y)=u(t,x)\overline{u(t,y)}$, where $u(t,x)$ is the solution to~\eqref{eq:pde}. 

It is easy to see (at least formally) that the solution $\gamma(t)$ of \eqref{eq:HF} satisfies $\gamma(t)=U(0,t)^*\,\gamma_0\, U(0,t)$ where $U(0,t)$ is a time-dependent family of unitary operators solving the equation
$$\left\{\begin{array}{ll}
 i\, \pt_t\, U(0,t)=-U(0,t)\, H_{\gamma(t)} & \text{for $0<t<T$}\\
U(t=0)=1,
\end{array}\right.$$
where $1$ denotes the identity operator on $L^2(\R^3)$. In particular, the operator $\gamma(t)$ is unitarily equivalent to the initial state $\gamma_0$ for all times $t\in[0,T)$ and the total mass of the system is conserved 
$$M[\gamma(t)]:=\tr(\gamma)=M[\gamma_0].$$
Also, if $\gamma_0\geq0$ as an operator, then $\gamma(t)\geq0$ for all times $t\in[0,T)$, as we easily verify.

For fermionic stars like neutrons stars and white dwarfs, the state $\gamma$ has to satisfy the additional constraint that $\gamma\leq 1$. By unitary equivalence, note that if $0\leq\gamma_0\leq 1$, then $0\leq\gamma(t)\leq 1$ for all times. Of course, the total mass of the system, $M[\gamma_0]=\tr(\gamma_0)$, can be made arbitrarily large, even with this new constraint. In the fermionic case, one usually considers a more complicated model called \emph{Hartree-Fock}, by adding to the mean-field operator $H_\gamma$ a nonlocal operator depending on $\gamma$ (see for instance \cite{HaiLenLewSch-10}). For the sake of simplicity, we do not consider such Hartree-Fock models here.

In analogy to the boson star equation~\eqref{eq:pde}, there is a conserved energy which, in the present case, reads
$$E_{\rm H}[\gamma]= \frac{1}{2} \tr\sqrt{-\Delta+m^2}\,\gamma -\frac{1}{4}\int_{\R^3}(|x|^{-1}\ast\rho_\gamma)\,\rho_\gamma.$$
Actually, we have $E_{\rm H}[\gamma]\geq E[\sqrt{\rho_\gamma}]$ where the right side is the boson star energy~\eqref{eq:def_energy} of $\sqrt{\rho_\gamma}$, see~\cite[Lemma 2.1]{LeLe10} and \cite[Lemma 7.13]{LieLos-01}. In particular, we have 
$$\inf_{\substack{\gamma\geq0,\; M[\gamma] = \lambda \\ \tr\,\sqrt{-\Delta+m^2}\gamma<+\ii\\}}E_{\rm H}[\gamma]=\inf_{\substack{u\in H^{1/2}(\R^3)\\ M[u]=\lambda}}E[u].$$

Existence of local-in-time solutions to~\eqref{eq:HF}, such that $t\mapsto \gamma(t)$ is continuous for an appropriate Sobolev-like operator norm was proved in the finite rank case in \cite{FrLe07b}, but the extension to the general case can be carried over by using the method of \cite{ChaGla-75,Chadam-76,BovPraFan-74,BovPraFan-76}. As in the case of equation~\eqref{eq:pde}, a blowup criterion tells us that when the maximal time of existence $T$ is finite, then it must hold $\tr\sqrt{-\Delta+m^2}\,\gamma(t)\to+\ii$ as $t\to T^-$. Existence of blowup solutions was proved in \cite{FrLe07b}, assuming that the initial state $\gamma_0$ is radial and finite rank. This result was later strengthened in \cite{HaiSch-09,HaiLenLewSch-10}. 

As before, there is a critical mass $\Mcrit$ which, however, might depend on the constraints that are imposed on $\gamma$. If we work under the assumption that $0\leq \gamma\leq \kappa 1$ (for fermions $\kappa=1$), we can define similarly as in~\eqref{eq:energy_critical_mass} an associated critical mass $0<\Mcrit(\kappa)<\ii$, above which blowup solutions are known to exist. The number $\Mcrit(\kappa)$ is nonincreasing with $\kappa$. It diverges like $\kappa^{-3/2}$ for small $\kappa$ (this follows from results of \cite{LiYa87}) and it equals $\Mcrit$, the critical mass of the boson star equation~\eqref{eq:pde}, for all $\kappa\geq \Mcrit$.

Adapting our method explained in the next section for the boson star equation~\eqref{eq:pde} and using techniques similar to those of \cite{LeLe10}, it is not difficult to prove the following result for the Hartree model~\eqref{eq:HF}.

\begin{thm}[\bf Hartree Case]\label{thm:Hartree}

Suppose that the solution $\gamma(t)$ of the Hartree equation \eqref{eq:HF} with $m\geq0$, has initial datum $\gamma_0 \geq 0$ such that
$$\tr\,\sqrt{-\Delta+m^2}\,\gamma_0< +\ii.$$
If $\gamma(t)$ blows up at finite time $0 < T<+\ii$, then the following holds:
\begin{enumerate}

\smallskip

\item[$(i)$] {\bf Existence and uniqueness of weak limit:} There exists a unique, self-adjoint trace-class operator $\gamma_*\geq0$ on $L^2(\R^3)$ such that 
$$\mbox{$\gamma(t) \weakto \gamma_*$ weakly--$\ast$ in the trace class as $t \to T^-$},$$
meaning that $\tr(\gamma(t) K) \to \tr(\gamma_* K)$ as $t \to T^-$ for every compact operator $K$ on $L^2(\R^3)$. 
Moreover, we have
$$(-\Delta+1)^{-1/4}\gamma(t)(-\Delta+1)^{-1/4}\to (-\Delta+1)^{-1/4}\gamma_*(-\Delta+1)^{-1/4}$$
strongly in the trace class as $t\to T^-$. 

\smallskip

\item[$(ii)$] {\bf Existence and uniqueness of blowup measure:} There exists a unique, positive, regular Borel measure $\mu \in \mathcal{M}(\R^3)$ such that 
$$
\mbox{$\rho_\gamma(t,\cdot) \weakto \mu$ weakly in $\mathcal{M}(\R^3)$ as $t \to T^-$},
$$
and we have that $\mu(\R^3) = \tr\,\gamma_0$.  

\smallskip

\item[$(iii)$] {\bf Minimal mass concentration point:} There is some point $x_*  \in \R^3$ such that 
$$
\mu(\{ x_* \}) \geq \Mcrit(\norm{\gamma_0}),
$$
where $\mu \in \mathcal{M}(\R^3)$ is given in $(ii)$. 
\end{enumerate}

\medskip

Furthermore, if $\gamma_0$ is radially symmetric in the sense that its integral kernel satisfies $\gamma_0(\mathcal{R}x,\mathcal{R}y)=\gamma_0(x,y)$ for every rotation $\mathcal{R}\in SO(3)$, then we have 
$$\mu = M \delta_{x=0} + \tr\,\gamma_*$$ 
for some $M \geq \Mcrit(\norm{\gamma_0})$, and moreover
$$
\mbox{$\1_{|x|\geq R}\,\gamma(t)\, \1_{|x|\geq R}\to \1_{|x|\geq R}\,\gamma_* \,\1_{|x|\geq R}$ strongly in the trace class,}
$$
$$
\mbox{$\1_{|x|\geq R}\,\rho_{\gamma(t)}\to \1_{|x|\geq R}\,\rho_{\gamma_*}$ strongly in $L^1(\R^3)$}
$$
for every $R > 0$, as $t \to T^-$. 
\end{thm}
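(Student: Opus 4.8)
The plan is to transpose the proofs of Theorems~\ref{thm:nonrad} and~\ref{thm:rad} to the operator setting, using three structural facts: (a) $\gamma(t)=U(0,t)^*\gamma_0U(0,t)$ is unitarily equivalent to $\gamma_0$, so $\{\gamma(t)\}_{0\le t<T}$ is uniformly bounded in the trace class, with $\tr\gamma(t)=\tr\gamma_0$ and $0\le\gamma(t)\le\norm{\gamma_0}$; (b) the Calder\'on commutator bound~\eqref{eq:Stein_intro}; and (c) Newton's theorem in the radial case. I would first prove $(ii)$ and the uniqueness of the weak-$*$ limit in $(i)$. For $\chi\in W^{1,\ii}(\R^3)$, equation~\eqref{eq:HF} gives $\tfrac{d}{dt}\tr(\chi\gamma(t))=-i\,\tr([\chi,\sqrt{-\Delta+m^2}]\,\gamma(t))$, since $\tr(\chi[H_{\gamma(t)},\gamma(t)])=\tr([\chi,H_{\gamma(t)}]\gamma(t))$ and $\chi$ commutes with the multiplication operator $|x|^{-1}\ast\rho_{\gamma(t)}$; by~\eqref{eq:Stein_intro} and $(a)$ this is bounded by $C\norm{\nabla\chi}_{L^\ii}\tr\gamma_0$, uniformly in $t$. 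Hence $t\mapsto\int\chi\rho_{\gamma(t)}$ is Lipschitz, has a limit as $t\to T^-$, and since $\rho_{\gamma(t)}$ is bounded in $L^1$ this produces a unique positive $\mu\in\cM(\R^3)$ with $\rho_{\gamma(t)}\weakto\mu$; testing against cut-offs $\chi_R$ with $\norm{\nabla\chi_R}_{L^\ii}\lesssim R^{-1}$ rules out loss of mass at infinity, so $\mu(\R^3)=\tr\gamma_0$. The same identity with $\chi$ replaced by a finite-rank operator $K=\sum_j|\phi_j\rangle\langle\psi_j|$, $\phi_j,\psi_j\in C^\ii_0(\R^3)$, gives $\tfrac{d}{dt}\tr(K\gamma(t))=-i\,\tr([K,H_{\gamma(t)}]\gamma(t))$ with $[K,H_{\gamma(t)}]$ bounded uniformly in $t$: $[K,\sqrt{-\Delta+m^2}]$ is bounded since $\phi_j,\psi_j\in H^1$, and $[K,|x|^{-1}\ast\rho_{\gamma(t)}]$ is bounded because $|x|^{-1}\ast\rho_{\gamma(t)}\in L^{3,\ii}$ with norm $\lesssim\tr\gamma_0$, hence locally $L^2$ with controlled norm. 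Since such $K$ are operator-norm dense among compact operators and $\tr\gamma(t)$ is bounded, $\tr(K\gamma(t))$ converges for every compact $K$, so $\gamma(t)$ has a unique weak-$*$ limit $\gamma_*\ge0$.

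For the strong statement in $(i)$, put $\Gamma(t)=(1-\Delta)^{-1/4}\gamma(t)(1-\Delta)^{-1/4}$; conjugation by $(1-\Delta)^{-1/4}$ preserves the compacts, so $\Gamma(t)\weakto\Gamma_*=(1-\Delta)^{-1/4}\gamma_*(1-\Delta)^{-1/4}$ weak-$*$. Using the standard fact that, for positive trace-class operators, weak-$*$ convergence together with convergence of traces implies trace-norm convergence, it suffices to show $\tr\Gamma(t)\to\tr\Gamma_*$. Weak-$*$ lower semicontinuity gives $\liminf\tr\Gamma(t)\ge\tr\Gamma_*$; for the reverse one controls the two ways mass could be lost. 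In frequency, $\tr(\1_{|D|>N}\Gamma(t))\le(1+N^2)^{-1/2}\tr\gamma_0\to0$ uniformly in $t$ (this is where $\tr\gamma(t)=\tr\gamma_0$ is essential). In space, the density $\rho_{\Gamma(t)}$ is uniformly tight at infinity, transferred from the tightness of $\rho_{\gamma(t)}$ established above using the fast decay of the kernel of $(1-\Delta)^{-1/4}$. Inserting the Hilbert--Schmidt (hence compact) cut-offs $\1_{|x|\le R}\1_{|D|\le N}$ and letting $N,R\to\ii$ then gives $\tr\Gamma(t)\to\tr\Gamma_*$, hence the stated strong convergence.

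For $(iii)$ I would rescale. Given $t_n\to T^-$, set $\lambda_n=(\tr\sqrt{-\Delta}\,\gamma(t_n))^{-1}\to 0$ and $v_n=S_{\lambda_n}\gamma(t_n)S_{\lambda_n}^*$, where $S_\lambda$ is the $L^2$-unitary rescaling $u\mapsto\lambda^{3/2}u(\lambda\,\cdot)$; then $\tr\sqrt{-\Delta}\,v_n=1$, $\tr v_n=\tr\gamma_0$, $0\le v_n\le\norm{\gamma_0}$, and conservation of $E_{\rm H}$ together with $\sqrt{-\Delta}\le\sqrt{-\Delta+m^2}\le\sqrt{-\Delta}+m$ forces the Coulomb energy $D[\rho_{v_n}]=\int\!\int\frac{\rho_{v_n}(x)\rho_{v_n}(y)}{|x-y|}\,dx\,dy$ to converge to $2$. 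By the inequality $E_{\rm H}[\gamma]\ge E[\sqrt{\rho_\gamma}]$ recalled above, $\sqrt{\rho_{v_n}}$ is bounded in $H^{1/2}(\R^3)$; since $D[\rho_{v_n}]\not\to 0$, the concentration-compactness analysis of $H^{1/2}$-bounded sequences from Appendix~\ref{app:CC_frac_Sobolev}, in its operator form as in~\cite{LeLe10}, excludes vanishing and yields translations $x_n$ and a nonzero weak limit $\tilde\gamma\ge 0$ with $0\le\tilde\gamma\le\norm{\gamma_0}$ and $D[\rho_{\tilde\gamma}]=2$; thus the massless energy $\tfrac12\tr\sqrt{-\Delta}\,\tilde\gamma-\tfrac14 D[\rho_{\tilde\gamma}]\le 0$ with $\tilde\gamma\ne 0$, and the variational characterization of $\Mcrit(\cdot)$ forces $\tr\tilde\gamma\ge\Mcrit(\norm{\gamma_0})$. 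Undoing the dilation and translation, for every $\eps>0$ a mass $\ge\Mcrit(\norm{\gamma_0})-\eps$ of $\gamma(t_n)$ sits in a ball $B(-\lambda_nx_n,C_\eps\lambda_n)$; by the tightness of $\rho_{\gamma(t)}$ the centers $-\lambda_nx_n$ stay bounded, so along a further subsequence $-\lambda_nx_n\to x_*$, whence $\mu(\overline{B(x_*,\delta)})\ge\limsup_n\int_{\overline{B(x_*,\delta)}}\rho_{\gamma(t_n)}\ge\Mcrit(\norm{\gamma_0})-\eps$ for all $\delta,\eps>0$, i.e.\ $\mu(\{x_*\})\ge\Mcrit(\norm{\gamma_0})$. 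This concentration-compactness step is the main obstacle: one must carry the non-vanishing/profile analysis of $H^{1/2}$-bounded \emph{operator} sequences through the Coulomb nonlinearity, and verify that the constraint $0\le\gamma\le\norm{\gamma_0}$ passes to the weak limit, so that the threshold is $\Mcrit(\norm{\gamma_0})$ and not merely $\Mcrit$.

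Finally, for radial $\gamma_0$ the solution $\gamma(t)$ stays radial, so Newton's theorem gives $|x|^{-1}\ast\rho_{\gamma(t)}\le\tfrac{2}{R}\tr\gamma_0$ on $\{|x|\ge R\}$, uniformly in $t$. Fix $\chi\in C^\ii(\R^3)$ with $\chi=1$ on $\{|x|\ge 2R\}$ and $\chi=0$ on $\{|x|\le R\}$, and put $\beta(t)=\chi\gamma(t)\chi$. A short computation gives $i\pt_t\beta=[\sqrt{-\Delta+m^2},\beta]+B(t)$, where
\[B(t)=[\chi,\sqrt{-\Delta+m^2}]\,\gamma(t)\,\chi+\chi\,\gamma(t)\,[\chi,\sqrt{-\Delta+m^2}]-\chi\,[\,|x|^{-1}\ast\rho_{\gamma(t)},\,\gamma(t)\,]\,\chi.\]
The first two terms of $B(t)$ are bounded in trace norm by~\eqref{eq:Stein_intro} and $(a)$, and the third because $\chi\,(|x|^{-1}\ast\rho_{\gamma(t)})$ is bounded uniformly in $t$ by Newton's theorem. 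Hence $e^{it\sqrt{-\Delta+m^2}}\beta(t)e^{-it\sqrt{-\Delta+m^2}}$ is Lipschitz in trace norm, hence converges as $t\to T^-$; undoing the conjugation (trace-norm continuity of $K\mapsto e^{-itA}Ke^{itA}$ together with strong continuity of the unitary group at $t=T$) shows that $\chi\gamma(t)\chi$, and therefore $\1_{|x|\ge R}\,\gamma(t)\,\1_{|x|\ge R}$, converges in trace norm, necessarily to $\1_{|x|\ge R}\,\gamma_*\,\1_{|x|\ge R}$; since $A\mapsto\rho_A$ is continuous from the trace class into $L^1(\R^3)$, also $\1_{|x|\ge R}\,\rho_{\gamma(t)}\to\1_{|x|\ge R}\,\rho_{\gamma_*}$ in $L^1$. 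Consequently $\mu$ and $\rho_{\gamma_*}\,dx$ agree on $\R^3\setminus\{0\}$, so $\mu=M\delta_{x=0}+\rho_{\gamma_*}\,dx$ with $M=\tr\gamma_0-\tr\gamma_*$; and since $\mu$ is rotation invariant, the concentration point produced in $(iii)$ can only be the origin, whence $M\ge\Mcrit(\norm{\gamma_0})$. Apart from the concentration-compactness input, every step is an operator-theoretic analogue of the arguments behind Theorems~\ref{thm:nonrad} and~\ref{thm:rad}.
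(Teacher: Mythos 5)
The paper itself omits the proof of Theorem~\ref{thm:Hartree} (it is stated to follow by adapting Theorems~\ref{thm:nonrad} and~\ref{thm:rad} with the techniques of \cite{LeLe10}), and your plan is exactly such an adaptation. Parts $(ii)$, the weak-$*$ limit in $(i)$, and the radial statements are sound operator analogues: the bound $\left|\tfrac{d}{dt}\tr(\chi\gamma(t))\right|\leq \norm{[\chi,\sqrt{-\Delta+m^2}]}\,\tr\gamma_0\lesssim\norm{\nabla\chi}_{L^\ii}$ replaces Lemma~\ref{lem:finite}, tightness and Prokhorov go through verbatim, and your trace-norm argument for $\1_{|x|\geq R}\gamma(t)\1_{|x|\geq R}$ via the conjugated Duhamel/Gronwall bound with Newton's theorem is the correct analogue of Section~\ref{sec:proof_radial}. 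Your route to the strong convergence in $(i)$ (weak-$*$ convergence plus no loss of trace at high frequency or at spatial infinity, then the Gr\"umm/dell'Antonio fact that weak convergence of positive operators together with convergence of traces gives trace-norm convergence) differs from the time-equicontinuity/interpolation argument used in the scalar Step~1, but it is a valid alternative.

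The genuine gap is in $(iii)$. You assert that excluding vanishing ``yields translations $x_n$ and a nonzero weak limit $\tilde\gamma$ with $D[\rho_{\tilde\gamma}]=2$'', and then conclude that the massless energy of $\tilde\gamma$ is nonpositive. This does not follow: non-vanishing only produces \emph{some} nontrivial weak limit up to translations, while the Coulomb term is not weakly continuous and its mass-energy can split among several bubbles receding from each other (dichotomy), in which case any single translated weak limit captures only part of $D[\rho_{v_n}]\to2$ and its massless energy may well be strictly positive even though $\cE_{\rm H}$ of the full sequence tends to $0$. This is precisely the scenario that the scalar proof of {\bf Claim 1} rules out, and it does so not by passing to one weak limit but by the full profile decomposition (Lemma~\ref{lem:splitting}), the strict positivity $\cE(v^j)>0$ for nonzero profiles of mass below the critical mass (which uses the sharp interpolation inequality behind $\Mcrit$), and the $\mass(\bm r^J)^{1/3}$ bound of Lemma~\ref{lem:subcritical_bound} for the remainder's energy. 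To prove $(iii)$ you must run this contradiction argument in the Hartree setting: either a profile decomposition at the operator level (localized states $\chi\gamma\chi$, using the fractional IMS formula of Lemma~\ref{lem:localization}) or a decomposition of $\sqrt{\rho_{v_n}}$ with a careful dictionary between bubble masses of the density and localized traces of $v_n$; and in either case you need the operator analogue of the sharp inequality, i.e.\ that a nonzero state with $0\leq\gamma\leq\norm{\gamma_0}$ and $\tr\gamma<\Mcrit(\norm{\gamma_0})$ has strictly positive massless Hartree energy (this is where the constraint-dependent threshold $\Mcrit(\kappa)$, and the fact that $0\leq\gamma\leq\kappa$ survives weak limits, enter). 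You correctly flag this step as the main obstacle, but as written your argument skips it, so the key conclusion $\mu(\{x_*\})\geq\Mcrit(\norm{\gamma_0})$ is not yet established; once it is, your Lemmas~\ref{lem:yt}--\ref{lem:fini}-style endgame and the radial identification $M\geq\Mcrit(\norm{\gamma_0})$ are fine.
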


\medskip

\begin{remark*} \em
If the radially symmetric initial datum $\gamma_0$ additionally satisfies $$\sum_{j=1}^3 \tr \left ( x_j \sqrt{-\Delta+m^2} x_j \gamma_0 \right ) < +\infty,$$ then a convergence statement similar to $(i')$ in Theorem \ref{thm:rad} holds.
\end{remark*}

\bigskip

The rest of the paper is devoted to the proof of Theorems~\ref{thm:nonrad} and~\ref{thm:rad}. The proof of the more general Theorem~\ref{thm:Hartree} is similar and it will not be given here.

\bigskip

\noindent\textbf{Conventions.}
We use $X \lesssim Y$ to denote that $X \leq C Y$, where $C > 0$ is some constant that only depends on the initial datum $u_0 \in H^{1/2}(\R^3)$, on the mass constant $m \geq 0$   in \eqref{eq:pde}, and perhaps on some fixed cutoff function. Since we mostly work in $d=3$ space dimensions, we use the notation $\| u \|_{L^p} \equiv \| u \|_{L^p(\R^3)}$ and $\| u \|_{H^s} \equiv \| u \|_{H^s(\R^3)}$ throughout this paper. For brevity's sake, we will use $u(t)$ or simply $u$ to denote the function $u=u(t,x)$.

\bigskip

\noindent\textbf{Acknowledgments.} E.\,L.~gratefully acknowledges support by a Steno research grant from the Danish science research council (FNU). M.\,L.~gratefully acknowledges support by the European Research Council under the European Community's Seventh Framework Program (FP7/2007--2013 Grant Agreement MNIQS no. 258023) and by the French Ministry of Research (ANR-10-BLAN-0101).

\section{Proof of Theorem \ref{thm:nonrad}}\label{sec:proof_nonradial}

Suppose that $u \in C^0([0,T); H^{1/2}(\R^3))$ solves \eqref{eq:pde} and blows up at finite time $0 < T < +\infty$. Note that we do not impose any symmetry condition on $u=u(t,x)$ throughout this section. 

\subsection*{Step 1 (Existence and Uniqueness of weak $L^2$-limit)}
First, we derive the following a-priori bound 
\begin{equation} \label{ineq:apriori_Hminus1}
\sup_{t \in [0,T)} \| \pt_t u(t) \|_{H^{-1}} \lesssim 1. 
\end{equation}
To show this bound, we note that the first term on the right-hand side in \eqref{eq:pde} satisfies
\begin{equation*}
 \| \sqrt{-\Delta + m^2} \, u(t) \|_{H^{-1}} \lesssim \| u(t) \|_{L^2} \lesssim 1,
\end{equation*} 
which follows from $L^2$-mass conservation. Next, we consider the function
\begin{equation*}
V_{u} := |x|^{-1} \ast |u|^2.
\end{equation*}
For $\phi \in H^1(\R^3)$,  we deduce the bound
\begin{equation*}
\| V_{u(t)} \phi \|_{L^2} \leq \int_{\R^3} \left ( \int_{\R^3} \frac{|\phi(y)|^2}{|x-y|^2} \, dy \right )^{1/2} |u(t,x)|^2 \, dx \leq 4 \| \nabla \phi \|_{L^2} \| u_0 \|_{L^2}^2,
\end{equation*}
where we used Minkowski's inequality together with Hardy's inequality $|x|^{-2} \leq 4 (-\Delta)$ in $\R^3$ and the conservation of $L^2$-mass. Therefore, we obtain the bound
\begin{equation*}
\big  | \big \langle V_{u(t)} u(t), \phi \big \rangle_{L^2} \big | \leq \| V_{u(t)} \phi \|_{L^2} \| u(t) \|_{L^2} \lesssim \| \phi \|_{H^1} \quad \mbox{for $t \in [0,T)$},
\end{equation*}
with any test function $\phi \in H^1(\R^3)$. In summary, we have shown that the right-hand side of \eqref{eq:pde} satisfies 
\begin{equation*}
\left \| \mbox{RHS of \eqref{eq:pde}} \right  \|_{H^{-1}} \lesssim 1, \quad \mbox{for $t \in [0,T)$}, 
\end{equation*}
which implies the desired a-priori bound \eqref{ineq:apriori_Hminus1}. 

Next, by $L^2$-mass conservation, we know that $u\in L^\ii([0,T),L^2(\R^3))$ and we have just shown in \eqref{ineq:apriori_Hminus1} that $\partial_t u\in L^\ii([0,T),H^{-1}(\R^3))$. By a classical interpolation result (see \cite{Lions-58}) this implies that
$$u\in C^0([0,T],H^{-1/2}(\R^3)).$$
In particular, there exists $u_* \in H^{-1/2}(\R^3)$ such that $u(t)\to u_*$ strongly in $H^{-1/2}(\R^3)$ as $t \to T^-$. Since $u(t)$ is also uniformly bounded in $L^2(\R^3)$, we also have that $u(t)\wto u_*$ weakly in $L^2(\R^3)$ as $t \to T^-$. This completes the proof of Part $(i)$ of Theorem \ref{thm:nonrad}.

\subsection*{Step 2 (Existence and Uniqueness of Blowup Measure)}
We begin with the following {\em ``finite speed of propagation estimate'' } for spatially localized parts of the $L^2$-norm of the solution $u=u(t,x)$. We have the following estimate.
\begin{lemma}[Finite speed of propagation] \label{lem:finite}
For $\chi \in W^{1,\infty}(\R^3)$ given, we define the function
$$
M_\chi(t) := \int_{\R^3} |u(t,x)|^2 \chi(x) \, dx, \quad \mbox{for $t \in [0,T)$}.
$$
Then $M_\chi(t)$ is differentiable and satisfies the uniform bound
$$
\left | \frac{d M_\chi(t)}{dt} \right | \lesssim \| \nabla \chi \|_{L^\infty} , \quad \mbox{for $t \in [0,T)$}.
$$
In particular, the function $M_{\chi}(t)$ has a limit as $t \to T^-$.
\end{lemma}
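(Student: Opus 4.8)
The plan is to compute $\frac{dM_\chi}{dt}$ directly from the equation \eqref{eq:pde}, identify the terms, and show that the potential term cancels while the kinetic term is controlled by Calder\'on's commutator estimate \eqref{eq:Stein_intro}. First I would justify differentiation under the integral: since $u \in C^0([0,T),H^{1/2})$ and $\partial_t u \in L^\infty([0,T),H^{-1})$ by \eqref{ineq:apriori_Hminus1}, the map $t \mapsto M_\chi(t) = \langle u(t), \chi u(t)\rangle$ is Lipschitz (pairing the $L^2$-bounded factor against the $H^{-1}$-bounded derivative, noting $\chi u(t)$ stays bounded in $H^1$ only if $\chi$ is smooth — so more carefully one first proves the identity for $u_0 \in H^1$ where everything is classical, then passes to the limit using well-posedness in $H^s$, $s\geq 1/2$, and density). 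Granting this, one gets
$$
\frac{dM_\chi(t)}{dt} = 2\,\mathrm{Re}\,\langle \partial_t u, \chi u\rangle = 2\,\mathrm{Re}\,\langle -i\sqrt{-\Delta+m^2}\,u + i V_u u,\ \chi u\rangle,
$$
where $V_u = |x|^{-1}\ast|u|^2$ is real-valued.

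The second, nonlinear, term contributes $2\,\mathrm{Re}\,\langle iV_u u, \chi u\rangle = 2\,\mathrm{Re}\big(i\int V_u \chi |u|^2\big) = 0$ since $V_u$, $\chi$, $|u|^2$ are all real. For the first term, write $2\,\mathrm{Re}\,\langle -i\sqrt{-\Delta+m^2}\,u, \chi u\rangle = -2\,\mathrm{Re}\big(i\langle \sqrt{-\Delta+m^2}\,u, \chi u\rangle\big) = -i\big(\langle \sqrt{-\Delta+m^2}\,u, \chi u\rangle - \langle \chi u, \sqrt{-\Delta+m^2}\,u\rangle\big)$. Using self-adjointness of $\sqrt{-\Delta+m^2}$, the bracket equals $\langle u, (\chi\sqrt{-\Delta+m^2} - \sqrt{-\Delta+m^2}\,\chi)u\rangle = \langle u, [\chi,\sqrt{-\Delta+m^2}]u\rangle$, i.e. a commutator. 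Hence
$$
\left|\frac{dM_\chi(t)}{dt}\right| \leq \big\| [\sqrt{-\Delta+m^2},\chi] \big\|_{L^2\to L^2}\, \| u(t)\|_{L^2}^2 \lesssim \|\nabla\chi\|_{L^\infty}\,\|u_0\|_{L^2}^2 \lesssim \|\nabla\chi\|_{L^\infty},
$$
by Calder\'on's commutator bound \eqref{eq:Stein_intro} and $L^2$-mass conservation. The uniform-in-$t$ bound on $|M_\chi'(t)|$ makes $M_\chi$ Lipschitz on $[0,T)$ with $T<\infty$, so it is uniformly continuous and extends continuously to $t=T$; in particular $\lim_{t\to T^-}M_\chi(t)$ exists.

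The main obstacle is the rigorous justification of the differentiation identity for merely $H^{1/2}$ data, since then $\chi u(t)$ need not lie in $H^{1/2}$ in a way that makes the pairing $\langle\partial_t u,\chi u\rangle$ literally a duality pairing unless $\chi$ is smooth; the clean route is to prove the identity first for smooth compactly supported $\chi$ and $u_0\in H^1$ (where \eqref{eq:pde} holds in $L^2$ and all manipulations are classical), obtain the commutator bound which is uniform, and then remove both restrictions by approximation — using density of $H^1$ in $H^{1/2}$ together with continuous dependence on initial data for \eqref{eq:pde}, and approximating a general $\chi\in W^{1,\infty}$ by mollification while controlling $\|\nabla\chi\|_{L^\infty}$. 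One should also double-check that the commutator $[\sqrt{-\Delta+m^2},\chi]$, a priori only bounded $H^{1/2}\to H^{-1/2}$ on general grounds, is in fact $L^2\to L^2$ bounded — but this is exactly the content of \eqref{eq:Stein_intro}, which we are permitted to invoke.
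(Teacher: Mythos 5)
Your proof is correct and follows essentially the same route as the paper: derive the identity $\frac{d}{dt}M_\chi(t)=i\langle u,[\sqrt{-\Delta+m^2},\chi]u\rangle$ (the real potential term cancelling), bound it by Calder\'on's commutator estimate together with $L^2$-mass conservation, and justify the computation by first working with $H^1$ data and then approximating using local well-posedness in $H^s$, $s\geq 1/2$. The extra care you take in spelling out the cancellation of the nonlinear term and the approximation argument is exactly what the paper compresses into its ``well-defined calculation'' and ``standard approximation argument.''
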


\begin{proof}
First, we assume that $u \in C^0([0,T); H^1(\R^3)) \cap C^1([0,T); L^2(\R^3))$ holds. Then a well-defined calculation shows that $M_\chi(t)$ satisfies 
\begin{equation} \label{eq:heisen}
\frac{d M_\chi(t)}{dt} = i \left \langle u(t), [\sqrt{-\Delta +m^2}, \chi ] u(t) \right \rangle_{L^2}.
\end{equation}
Next, we recall Calder\'on's commutator estimate (see Appendix \ref{app:CC_frac_Sobolev} for more details)
\begin{equation}
\| [\sqrt{-\Delta+m^2}, \chi ] \|_{L^2 \to L^2} \lesssim \| \nabla \chi \|_{L^\infty}.
\label{eq:Stein}
\end{equation} 
Combining this estimate with the conservation of $L^2$-mass, we find that
\begin{equation}
\left | \frac{d M_\chi(t)}{dt} \right | \lesssim \| \nabla \chi \|_{L^\infty} \| u(t) \|_{L^2}^2 \lesssim \| \nabla \chi \|_{L^\infty},
\end{equation}
which is the desired bound, provided that $u(t) \in H^1(\R^3)$ holds. 

By a standard approximation argument and using the local well-posedness of \eqref{eq:pde} in $H^s(\R^3)$ for $s \geq 1/2$ as shown in \cite{Le07}, we conclude that \eqref{eq:heisen} indeed holds for  solutions $u \in C^0([0,T); H^{1/2}(\R^3))$. This completes the proof of Lemma \ref{lem:finite}. \end{proof}

\noindent
Next, we  derive with the following {\em ``tightness''} property.
 
\begin{lemma} \label{lem:tight}
The family of measures $\{ |u(t)|^2 \}_{t \in [0,T)} \subset \mathcal{M}(\R^3)$ is tight. That is, for every $\eps > 0$, there exists some constant $R > 0$ such that
$$
\int_{|x| \geq R} |u(t,x)|^2 \, dx  \leq \eps, \quad \mbox{for $t \in [0,T)$}.
$$ 
\end{lemma}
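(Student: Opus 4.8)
The plan is to read off tightness directly from the finite-speed-of-propagation bound of Lemma~\ref{lem:finite}, crucially using that the blowup time $T$ is finite. First I would fix once and for all a reference profile $\psi \in C^\infty(\R^3)$ with $0 \leq \psi \leq 1$, $\psi \equiv 0$ on $\{ |x| \leq 1 \}$ and $\psi \equiv 1$ on $\{ |x| \geq 2 \}$, and then for $R > 0$ set $\chi_R(x) := \psi(x/R)$. This is a rescaling (not a new, uncontrolled) cutoff, so $\chi_R \in W^{1,\infty}(\R^3)$ with $\| \nabla \chi_R \|_{L^\infty} = R^{-1} \| \nabla \psi \|_{L^\infty}$, and one has the sandwiching $\1_{\{ |x| \geq 2R \}} \leq \chi_R \leq \1_{\{ |x| \geq R \}}$.

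Next I would apply Lemma~\ref{lem:finite} with $\chi = \chi_R$, which gives $\left| \frac{d}{dt} M_{\chi_R}(t) \right| \lesssim \| \nabla \chi_R \|_{L^\infty} \lesssim R^{-1}$ on $[0,T)$, where the implicit constant depends only on $u_0$, $m$ and $\psi$ and is in particular \emph{independent of $R$} (it comes from Calder\'on's commutator constant in \eqref{eq:Stein} and $\| u_0 \|_{L^2}^2$). Integrating in time over the finite interval $[0,t] \subset [0,T)$ and using $\chi_R \geq 0$ together with the sandwiching above yields
$$
\int_{|x| \geq 2R} |u(t,x)|^2 \, dx \;\leq\; M_{\chi_R}(t) \;\leq\; M_{\chi_R}(0) + \int_0^t \left| \tfrac{d}{ds} M_{\chi_R}(s) \right| ds \;\leq\; \int_{|x| \geq R} |u_0|^2 \, dx + \frac{C T}{R},
$$
for all $t \in [0,T)$, with $C = C(u_0, m, \psi)$.

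It then remains to make the right-hand side small. Since $u_0 \in L^2(\R^3)$, dominated convergence gives $\int_{|x| \geq R} |u_0|^2 \to 0$ as $R \to \infty$; hence, given $\eps > 0$, I would choose $R$ so large that $\int_{|x| \geq R} |u_0|^2 \leq \eps/2$ and $C T / R \leq \eps/2$ simultaneously, so that $\int_{|x| \geq 2R} |u(t,x)|^2 \leq \eps$ for every $t \in [0,T)$; relabeling $2R$ as the sought radius completes the argument.

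I do not expect a genuine obstacle here — the statement is essentially an integrated form of Lemma~\ref{lem:finite}. The only two points that need care are: (a) realizing the localizing cutoffs as dilates of a single fixed profile, so that the constant in Lemma~\ref{lem:finite} does not degenerate as $R \to \infty$; and (b) the essential use of $T < +\infty$, without which the term $CT/R$ would be worthless — this is exactly the place where finiteness of the blowup time enters the tightness argument.
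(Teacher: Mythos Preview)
Your proposal is correct and is essentially the same argument as the paper's: both apply Lemma~\ref{lem:finite} to a dilated fixed cutoff $\chi(\cdot/R)$, integrate the bound $|M_\chi'(t)| \lesssim R^{-1}$ over the finite interval $[0,T)$, and then choose $R$ large using $u_0 \in L^2$. The only cosmetic difference is that the paper works with an \emph{interior} cutoff ($\chi \equiv 1$ near the origin, $\chi \equiv 0$ far away) and recovers the exterior mass via $L^2$-conservation, whereas you work directly with an \emph{exterior} cutoff; your version is marginally more direct but otherwise identical in substance.
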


\begin{proof}
Let $\chi \in W^{1,\infty}(\R^3)$ be a cutoff function with $0 \leq \chi \leq 1$ such that $\chi(x) \equiv 1$ for $|x| \leq 1/2$ and $\chi(x) \equiv 0$ for $|x| \geq 1$. For $R > 0$, we  define $\chi_R(x) := \chi(x/R)$ and we consider  
\begin{equation}
M_R(t) := \int_{\R^3} |u(t,x)|^2 \chi_R  ( x) \, dx.
\end{equation}
By Lemma \ref{lem:finite},  we have the uniform bound 
\begin{equation}
\left | \frac{d M_R(t)}{dt} \right | \lesssim \| \nabla \chi_R \|_{L^\infty} \lesssim \frac{1}{R}.
\end{equation}
Integrating this bound and using the fact that $0 < T < +\infty$, we obtain 
\begin{equation}
M_{R}(t) \geq M_R(0) - \frac{CT}{R}, \quad \mbox{for $t \in [0,T)$},
\end{equation} 
where $C >0$ is some fixed constant. Now, let $\eps > 0$ be given. We choose $R > 0$ sufficiently large such that $M_R(0) \geq \int |u_0|^2 - \eps/2$ and $R \geq 2CT\eps^{-1}$ both hold. By $L^2$-mass conservation and using $\chi_R(x) \leq \1_{\{ |x| \leq  R \} }(x)$, we conclude that
\begin{align*}
\int_{|x| \geq R} |u(t,x)|^2 & = \int_{\R^3} |u_0(x)|^2  - \int_{|x| \leq R} |u(t,x)|^2  \leq \int_{\R^3} |u_0(x)|^2 -M_R(t) \leq \eps,
\end{align*}
for all $t \in [0,T)$. This is the claimed tightness bound. \end{proof}

We now turn to the proof of Part $(ii)$ of Theorem \ref{thm:nonrad}. Let $\{ t_n \}_{n=1}^\infty$ be a sequence of times such that $t_n \to T^-$. By $L^2$-mass conservation and Lemma \ref{lem:tight}, the sequence $\{ |u(t_n)|^2 \}_{n=1}^\infty$ is uniformly bounded $L^1(\R^3)$ and forms a tight family of measures on $\R^3$. By Prokhorov's theorem, we deduce (after passing to a subsequence if necessary) that 
\begin{equation*}
\mbox{$|u(t_n)|^2 \weakto \mu$ weakly in $\mathcal{M}(\R^3)$ as $n \to \infty$,} 
\end{equation*}
where $\mu\in \mathcal{M}(\R^3)$ is some positive Borel measure on $\R^3$. Using the tightness property in Lemma \ref{lem:tight} and $\int_{\R^3} |u(t)|^2 = \int_{\R^3} |u_0|^2$, we readily check that $\mu(\R^3) = \int_{\R^3} | u_0|^2$ holds. In particular, we see that $\mu$ is a regular Borel measure, since $\mu$ is finite and positive.  

To show that $\mu \in \mathcal{M}(\R^3)$ is independent of the sequence $t_n \to T^-$, we use Lemma \ref{lem:finite}  again. Indeed, let $\chi \in W^{1,\infty}(\R)$ be given and consider 
\begin{equation}
M_{\chi}(t) := \int_{\R^3} |u(t,x)|^2 \chi(x) \, dx .
\end{equation} 
By Lemma \ref{lem:finite}, we have $M_\chi(t) \to M^*_\chi$ as $t \to T^-$ with some unique limit $M_\chi^* \in \R$. Hence,
\begin{equation} \label{eq:measure}
M_\chi^* = \int_{\R^3} \chi d \mu = \int_{\R^3} \chi d \widetilde{\mu} ,
\end{equation}
whenever $|u(t_n)|^2 \weakto \mu$ and $|u(\tilde{t}_n)|^2 \weakto \widetilde{\mu}$ weakly in $\mathcal{M}(\R^3)$ as  $t_n \to T^-$ and $\tilde{t}_n \to T^-$.  Since the second equation in \eqref{eq:measure} is valid for all $\chi \in W^{1,\infty}(\R^3)$, we deduce that equality  $\mu = \widetilde{\mu}$ holds for the finite and positive Borel measures $\mu \in \mathcal{M}(\R^3)$ and $\tilde{\mu} \in \mathcal{M}(\R^3)$. This completes the proof of Part $(ii)$ of Theorem \ref{thm:nonrad}.

\subsection*{Step 3 (Existence of Blowup Point)}
We finally establish Part $(iii)$ of Theorem \ref{thm:nonrad}, that is we show existence of (at least) one concentration point $x_* \in \R^3$ such that
\begin{equation} \label{ineq:muatom}
\mu( \{ x_* \} ) \geq \Mcrit.
\end{equation}
We recall that $\Mcrit > 0$ is the critical mass, see Appendix \ref{app:Mcrit}. As a first step, we prove the following weaker result.

\begin{lemma} \label{lem:yt}
There exists a function $y : [0,T) \to \R^3$ such that 
\begin{equation}
\liminf_{t \to T^-} \int_{|x-y(t)| \leq R} |u(t,x)|^2 \, dx \geq \Mcrit,
\end{equation} 
for any $R > 0$. 
\end{lemma}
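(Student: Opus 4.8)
The plan is to run a concentration-compactness argument on a rescaled family, in the spirit of the mass-critical NLS proofs of Weinstein, Merle--Tsutsumi and Nawa. First I would use the blowup hypothesis to normalise. Since $\|u(t)\|_{L^2}=\|u_0\|_{L^2}$ is conserved while $\|u(t)\|_{H^{1/2}}\to\infty$, the seminorm $\rho(t):=\| \, |\nabla|^{1/2}u(t)\|_{L^2}$ tends to $+\infty$ as $t\to T^-$, and moreover $\pscal{u(t),\sqrt{-\Delta+m^2}\,u(t)}/\rho(t)^2\to 1$ because $|\xi|\le\sqrt{\xi^2+m^2}\le|\xi|+m$. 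Set $v_t(x):=\rho(t)^{-3}u\big(t,\rho(t)^{-2}x\big)$, so that $\|v_t\|_{L^2}=\|u_0\|_{L^2}$ and $\| \, |\nabla|^{1/2}v_t\|_{L^2}=1$ for every $t$. Writing $P[f]:=\int_{\R^3}(|x|^{-1}\ast|f|^2)|f|^2$ and using the scaling identity $P[v_t]=\rho(t)^{-2}P[u(t)]$ together with energy conservation \eqref{eq:def_energy} (which gives $P[u(t)]=2\pscal{u(t),\sqrt{-\Delta+m^2}\,u(t)}-4E[u_0]$), I get
$$P[v_t]=2\,\rho(t)^{-2}\pscal{u(t),\sqrt{-\Delta+m^2}\,u(t)}-4\,\rho(t)^{-2}E[u_0]\ \longrightarrow\ 2\qquad\text{as } t\to T^-.$$
The mass contribution of $m>0$ is lower order after rescaling, so the argument is uniform in $m\ge 0$.

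Next I would invoke the sharp Gagliardo--Nirenberg/Hardy--Littlewood--Sobolev inequality attached to the ground state $Q$ of \eqref{eq:def_Q}, namely $P[f]\le \frac{2}{\Mcrit}\|f\|_{L^2}^2\, \| \,|\nabla|^{1/2}f\|_{L^2}^2$, with equality exactly on rescaled translates of $Q$; the constant $2/\Mcrit$ comes from testing \eqref{eq:def_Q} against $Q$ together with the Pohozaev identity obtained by differentiating the action along $\lambda\mapsto\lambda^{3/2}Q(\lambda\,\cdot)$ (see Appendix \ref{app:Mcrit}). Fix a sequence $t_n\to T^-$; the family $\{v_{t_n}\}$ is bounded in $H^{1/2}(\R^3)$, so I may apply the lack-of-compactness analysis for bounded sequences in fractional Sobolev spaces developed in Appendix \ref{app:CC_frac_Sobolev}. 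Vanishing is excluded: if $\sup_{y}\int_{B(y,1)}|v_{t_n}|^2\to 0$, then $v_{t_n}\to 0$ in $L^{12/5}(\R^3)$ by the standard vanishing lemma (using boundedness in $H^{1/2}$ and $12/5\in(2,3)$), hence $P[v_{t_n}]\lesssim\|v_{t_n}\|_{L^{12/5}}^4\to 0$ by Hardy--Littlewood--Sobolev, contradicting $P[v_{t_n}]\to 2$. Therefore, up to a subsequence, there is a profile decomposition $v_{t_n}=\sum_{j} V^j(\cdot-x_{n,j})+r_n$ with pairwise orthogonal translations $|x_{n,i}-x_{n,j}|\to\infty$ and remainder negligible in $L^{12/5}$; the normalisation $\| \,|\nabla|^{1/2}v_{t_n}\|_{L^2}=1$ together with $\|v_{t_n}\|_{L^2}=\|u_0\|_{L^2}$ keeps the scales bounded and bounded away from $0$, while the decay of $|x|^{-1}$ makes all interaction terms between distinct profiles and with $r_n$ vanish. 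This yields $\sum_j \| \,|\nabla|^{1/2}V^j\|_{L^2}^2\le 1$, $\sum_j\|V^j\|_{L^2}^2\le\|u_0\|_{L^2}^2$ and $\sum_j P[V^j]=2$. If every $V^j$ had $\|V^j\|_{L^2}^2<\Mcrit$, then $\sum_j P[V^j]\le\frac{2}{\Mcrit}\big(\sup_j\|V^j\|_{L^2}^2\big)\sum_j\| \,|\nabla|^{1/2}V^j\|_{L^2}^2<2$, a contradiction; so there is a profile $V:=V^{j_0}$ with $\|V\|_{L^2}^2\ge\Mcrit$.

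Finally I would transfer this concentration back to $u$. By the compact embedding $H^{1/2}(\R^3)\hookrightarrow\hookrightarrow L^2_{\mathrm{loc}}(\R^3)$ one has $v_{t_n}(\cdot+x_{n,j_0})\to V$ strongly in $L^2(B(0,R))$ for every $R>0$, hence $\int_{B(x_{n,j_0},R)}|v_{t_n}|^2\to\int_{B(0,R)}|V|^2\to\|V\|_{L^2}^2\ge\Mcrit$ as $R\to\infty$. To obtain a single function $y:[0,T)\to\R^3$ valid for the full limit $t\to T^-$ (not just along subsequences), define $\widetilde y(t)$ to be a near-maximiser of $z\mapsto\int_{B(z,1)}|v_t|^2$ and set $y(t):=\rho(t)^{-2}\widetilde y(t)$. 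Since $\int_{B(y(t),R)}|u(t)|^2=\int_{B(\widetilde y(t),\,R\rho(t)^2)}|v_t|^2$ and $\rho(t)^2\to\infty$, for any fixed $R>0$ the ball $B(\widetilde y(t),R\rho(t)^2)$ eventually contains $B(\widetilde y(t),R')$ for every fixed $R'$; so it suffices to show $\liminf_{t\to T^-}\int_{B(\widetilde y(t),R')}|v_t|^2\ge\Mcrit$ for all $R'$. This follows by contradiction from the previous step: a violating sequence $t_n\to T^-$ would still produce a mass-$\ge\Mcrit$ bubble $V$ which, by the near-maximality in the definition of $\widetilde y$, must be centred within a bounded distance of $\widetilde y(t_n)$, contradicting $\int_{B(\widetilde y(t_n),R')}|v_{t_n}|^2<\Mcrit-\delta$ once $R'$ is large.

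The main obstacle is the concentration-compactness step itself for this nonlocal problem: unlike mass-critical NLS, both the pseudodifferential operator $\sqrt{-\Delta+m^2}$ and the long-range Coulomb nonlinearity $|x|^{-1}\ast|u|^2$ require a careful verification that the nonlinear functional $P[\cdot]$ splits along profiles (remainder controlled in $L^{12/5}$ via Hardy--Littlewood--Sobolev, interaction energies between well-separated bubbles decaying by the pointwise decay of $|x|^{-1}$), and one must confirm that the rescaling correctly isolates the $m$-dependent lower-order part so that $P[v_t]\to 2$ for all $m\ge 0$; this is exactly why a self-contained treatment is provided in Appendix \ref{app:CC_frac_Sobolev}. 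A secondary, more routine point is promoting the subsequential statement to the uniform $\liminf_{t\to T^-}$ with a well-defined $y(\cdot)$, which the near-maximiser device handles.
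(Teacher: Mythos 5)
Your rescaling, the computation $\Dcou(|v_t|^2)\to 2$ via energy conservation, the exclusion of vanishing through the Hardy--Littlewood--Sobolev inequality, and the use of a translation-type profile decomposition together with the sharp interpolation inequality $\Dcou(|f|^2)\le \frac{2}{\Mcrit}\|f\|_{L^2}^2\,\|\,|\nabla|^{1/2}f\|_{L^2}^2$ to produce a weak limit (up to translations) of mass at least $\Mcrit$ all follow the same route as the paper; the paper phrases the contradiction through the positivity of the massless energy of each profile and a lower bound $\cE(r^J_k)\gtrsim-\eps^{1/3}$ coming from the subcritical estimate, but this is the same mathematical content as your direct splitting of $\Dcou$.

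The genuine gap is in the last step, where you construct the function $y(t)$. You take $\widetilde y(t)$ to be a near-maximiser of $z\mapsto\int_{B(z,1)}|v_t|^2$, i.e. of the mass in balls of \emph{fixed unit radius at the rescaled level} (radius $\sigma(t)^{-1}$ in the original variables), and then assert that any mass-$\geq\Mcrit$ bubble produced along a violating sequence ``must be centred within a bounded distance of $\widetilde y(t_n)$ by near-maximality.'' This does not follow: near-maximality of the \emph{unit-ball} mass does not locate the bubble carrying the largest \emph{total} mass. For instance, if $v_{t_n}$ consists of a bubble $V$ of total mass $\Mcrit$ whose largest unit-ball mass is $c$, together with a far-away ``decoy'' bump of total mass $c+2\eta<\Mcrit$ but largest unit-ball mass $c+\eta$, then $\widetilde y(t_n)$ sits at the decoy, and $\int_{B(\widetilde y(t_n),R')}|v_{t_n}|^2\le c+2\eta+o(1)<\Mcrit$ for every fixed $R'$; nothing in your constraints excludes such configurations, so the claimed bound $\liminf_t\int_{B(\widetilde y(t),R')}|v_t|^2\ge\Mcrit$ fails as argued. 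The paper avoids this by maximising over balls whose radius $\lambda(t)$ shrinks in the original variables but \emph{grows after rescaling} ($\lambda(t)\to0$ with $\lambda(t)\,\|\,|\nabla|^{1/2}u(t)\|_{L^2}^2\to\infty$, e.g. $\lambda(t)=\sigma(t)^{-1/2}$): it first proves $\liminf_{t\to T^-}\sup_{y\in\R^3}\int_{|x-y|\le\lambda(t)}|u(t,x)|^2\ge\Mcrit$, which is legitimate because at rescaled radius $\sigma(t)\lambda(t)\to\infty$ the sup eventually captures the full mass of the limiting profile, and only then defines $y(t)$ as an exact maximiser (which exists since the ball-mass function is continuous and vanishes at infinity); since $\lambda(t)\le R$ for $t$ close to $T$, the conclusion for fixed $R$ follows. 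Your argument can be repaired by replacing the unit radius with such an intermediate, slowly growing radius, but as written the concluding step does not hold.
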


\begin{remark} \em
 In the context of $L^2$-critical NLS, the analogous result of Lemma \ref{lem:yt} is well-known; see \cite{MeTs90,Na90,We86,HmKe05}. However, the adaptation of the NLS arguments  to the nonlocal pseudo-differential operator $\sqrt{-\Delta+m^2}$ poses some technical difficulties. Therefore we will provide a self-contained proof below.  
 \end{remark}
 

\begin{proof}[Proof of Lemma \ref{lem:yt}]
Let $t_n \to T^-$ be any sequence approaching the blowup time and define the sequence $\bm{v}= \{ v_n \}_{n=1}^\infty \subset H^{1/2}(\R^3)$ by setting 
\begin{equation} \label{def:vn}
v_n(x):= \sigma_n^{-3/2} u(t_n, \sigma_n^{-1} x) , \quad \mbox{where $\sigma_n := \| |\nabla|^{1/2} u(t_n) \|_{L^2}^2$.}
\end{equation}
Notice that $\|  v_n \|_{L^2}=  \|  u_0  \|_{L^2}$ and $\| |\nabla|^{1/2} v_n  \|_{L^2} = 1$ for all $n \in \N$. We then introduce the energy-type functional
\begin{equation}
\mathcal{E}(u) :=  \frac{1}{2} \int_{\R^3} | |\nabla|^{1/2} u|^2  - \frac{1}{4} \Dcou(|u|^2),
\label{eq:def_massless_energy}
\end{equation}
where we denote 
\begin{equation}
\Dcou(|u|^2) := \int_{\R^3} \big ( |x|^{-1} \ast |u|^2 \big ) |u|^2 . 
\end{equation} 
The energy $\cE$ is the same as the original conserved energy $E$ defined in~\eqref{eq:def_energy}, except that the constant $m$ has been set to zero.
Using that $\sigma_n \rightarrow +\infty$ by assumption, an elementary calculation shows that
\begin{equation} \label{E0zero}
\mathcal{E}(v_n) = \sigma_n^{-1} \mathcal{E}(u(t_n)) \rightarrow 0 ,
\end{equation}
since $|\mathcal{E}(u(t_n))| \leq E[u_0] + m \| u_0 \|_{L^2}^2 \lesssim 1$, thanks to conservation of the energy $E$ and $L^2$-mass, as well as the operator estimates $\sqrt{-\Delta+m^2} -m \leq \sqrt{-\Delta} \leq \sqrt{-\Delta + m^2}$. By  \eqref{E0zero} and the normalization constraint $\| |\nabla|^{1/2} v_n \|_{L^2} = 1$ for $n \in \N$, we find that 
\begin{equation} \label{D0zero}
\Dcou(|v_n|^2) \to 2 .
\end{equation}

As a next step, we apply classical compactness arguments (see \cite{Lieb-83,Lions-84a,Lions-84b}) to the bounded sequence $\bm{v} = \{ v_n \}_{n=1}^\infty \subset H^{1/2}(\R^3)$. For the reader's convenience, the adaptation of these arguments to the setting of the fractional Sobolev space $H^{1/2}(\R^3)$ is outlined in Appendix \ref{app:CC_frac_Sobolev}. 
As a first and central ingredient, we consider the ``highest mass of weak limits up to translations and extraction of a subsequence'' of the sequence $\bm{v} = \{v_n\}_{n=1}^\infty \subset H^{1/2}(\R^3)$ which is given by
\begin{equation*}
\mass (\bm{v} ):= \sup\left\{\int_{\R^3}|v|^2\ :\ \exists\{x_k\}\subset\R^3,\ v_{n_k}(\cdot-x_k)\wto v\ \text{weakly in $H^{1/2}(\R^3)$}\right\}.
\label{eq:lambda}
\end{equation*}
Clearly, we have the bounds $0 \leq \mass (\bm{v})) \leq  \int_{\R^3} |u_0|^2$. We now establish the following claim, which will be the heart of the proof of Lemma \ref{lem:yt}.

\begin{claim}
The sequence $\bm{v} = \{ v_n \}_{n=1}^\infty \subset H^{1/2}(\R^3)$ satisfies $\mass (\bm{v} ) \geq \Mcrit$. 
\end{claim}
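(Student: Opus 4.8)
The plan is to combine two ingredients. The first is the sharp Gagliardo--Nirenberg-type inequality attached to the ground state $Q$, namely
\begin{equation*}
\Dcou(|u|^2) \leq \frac{2}{\Mcrit}\, \big\| |\nabla|^{1/2} u \big\|_{L^2}^2 \, \| u \|_{L^2}^2 \qquad \text{for all } u \in H^{1/2}(\R^3),
\end{equation*}
with optimal constant $2/\Mcrit$, which is established in Appendix~\ref{app:Mcrit} (together with the identities $\||\nabla|^{1/2}Q\|_{L^2}^2 = \|Q\|_{L^2}^2$ and $\Dcou(|Q|^2) = 2\|Q\|_{L^2}^2$). The second is the bubble decomposition of bounded sequences in $H^{1/2}(\R^3)$ described in Appendix~\ref{app:CC_frac_Sobolev}, applied to $\bm v = \{v_n\}$; recall that $\|v_n\|_{L^2}^2 = \|u_0\|_{L^2}^2$, $\||\nabla|^{1/2}v_n\|_{L^2}^2 = 1$, and $\Dcou(|v_n|^2) \to 2$ by \eqref{D0zero}. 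The idea is that the decomposition splits $v_n$, up to a subsequence, into spatially diverging translates of fixed profiles plus a remainder that is negligible in $L^{12/5}(\R^3)$ (the natural non-compact topology for the Coulomb nonlinearity); inserting this into the sharp inequality profile by profile, and using that each profile carries $L^2$-mass at most $\mass(\bm v)$ while the profile kinetic energies sum to at most $1$, forces $\mass(\bm v)\geq\Mcrit$.

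In detail, I would first invoke Appendix~\ref{app:CC_frac_Sobolev} to obtain, after passing to a subsequence, profiles $V^j \in H^{1/2}(\R^3)$ and centers $x_n^j \in \R^3$ with $|x_n^j - x_n^k| \to \infty$ for $j \neq k$, together with remainders $r_n^J$ such that $v_n = \sum_{j=1}^J V^j(\cdot - x_n^j) + r_n^J$, with $\limsup_{n\to\infty}\|r_n^J\|_{L^{12/5}} \to 0$ as $J \to \infty$, and with the orthogonality relations
\begin{equation*}
\|v_n\|_{L^2}^2 = \sum_{j=1}^J \|V^j\|_{L^2}^2 + \|r_n^J\|_{L^2}^2 + o_n(1), \qquad \big\||\nabla|^{1/2}v_n\big\|_{L^2}^2 = \sum_{j=1}^J \big\||\nabla|^{1/2}V^j\big\|_{L^2}^2 + \big\||\nabla|^{1/2}r_n^J\big\|_{L^2}^2 + o_n(1).
\end{equation*}
In particular $\sum_{j\geq 1}\||\nabla|^{1/2}V^j\|_{L^2}^2 \leq 1$, and $\|V^j\|_{L^2}^2 \leq \mass(\bm v)$ for each $j$, since $V^j$ is, up to translation and a subsequence, a weak $H^{1/2}$-limit of $\{v_n\}$ and hence admissible in the definition of $\mass(\bm v)$. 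The second step is to show that the Coulomb functional decouples along the profiles,
\begin{equation*}
\Dcou(|v_n|^2) = \sum_{j=1}^J \Dcou(|V^j|^2) + o_n(1) + o_J(1),
\end{equation*}
for which I would use the Hardy--Littlewood--Sobolev bound $\Dcou(|f|^2) \lesssim \|f\|_{L^{12/5}}^4$ and the embedding $H^{1/2}(\R^3) \hookrightarrow L^{12/5}(\R^3)$: the cross terms between $V^j(\cdot - x_n^j)$ and $V^k(\cdot - x_n^k)$ with $j\neq k$ vanish as $n\to\infty$ because the centers diverge (this requires a short argument, since the kernel $|x|^{-1}$ is long range, obtained by cutting the $x$- and $y$-integrations at a large ball around one center), while the contribution of $r_n^J$ is controlled by $\|r_n^J\|_{L^{12/5}}$, which is uniformly small in $n$ once $J$ is large. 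Letting $n\to\infty$ and then $J\to\infty$ gives $2 = \sum_{j\geq 1}\Dcou(|V^j|^2)$; in particular at least one profile is nontrivial.

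Finally, applying the sharp inequality to each profile and using the two bounds above,
\begin{equation*}
2 = \sum_{j\geq 1}\Dcou(|V^j|^2) \leq \frac{2}{\Mcrit}\, \mass(\bm v) \sum_{j\geq 1}\big\||\nabla|^{1/2}V^j\big\|_{L^2}^2 \leq \frac{2}{\Mcrit}\, \mass(\bm v),
\end{equation*}
which yields $\mass(\bm v) \geq \Mcrit$, proving the claim. I expect the main obstacle to be the decoupling of the nonlocal functional $\Dcou$ along the bubble decomposition, and more broadly the setup of the profile decomposition in the fractional space $H^{1/2}(\R^3)$ in the $L^{12/5}$-topology — which is precisely why a self-contained treatment is deferred to Appendix~\ref{app:CC_frac_Sobolev}. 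Note that the use of the \emph{sharp} constant $2/\Mcrit$ is essential: a non-optimal constant would only give a lower bound on $\mass(\bm v)$ strictly below $\Mcrit$.
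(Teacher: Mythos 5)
Your argument is correct and rests on the same ingredients as the paper's proof: the profile decomposition of Lemma \ref{lem:splitting}, the sharp constant $2/\Mcrit$ from Appendix \ref{app:Mcrit}, the observation that each profile satisfies $\int_{\R^3}|v^j|^2\leq \mass(\bm{v})$ because it is a weak limit of translates of a subsequence, and the control of the remainder via $\mass(\bm{r}^J)\leq\eps$ together with Lemma \ref{lem:subcritical_bound}. The only difference is organizational: you sum the sharp inequality over the profiles and conclude directly from $\Dcou(|v_n|^2)\to 2$, whereas the paper argues by contradiction, assuming $\mass(\bm{v})<\Mcrit$ so that every profile has strictly positive energy $\cE(v^j)>0$, which forces the remainder to carry negative energy and contradicts the $O(\eps^{1/3})$ bound -- both versions hinge on exactly the same decoupling of the kinetic and Coulomb terms along the decomposition.
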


First, we easily rule out that $\mass ( \bm{v}  )$ vanishes by a simple contradiction argument. Indeed, from Lemma \ref{lem:vanishing} in Appendix~\ref{app:CC_frac_Sobolev}, we know that $\mass( \bm{v})= 0$ if and only if $v_n\to0$ strongly in $L^p(\R^3)$ for all $2<p<3$. By the Hardy--Littlewood--Sobolev inequality, this implies
$$\Dcou(|v_n|^2) \lesssim \|v_n \|^4_{L^{12/5}} \to 0,$$
 which contradicts \eqref{D0zero}. Hence the case $\mass (\bm{v}  )=0$ cannot occur.

To complete the proof of {\bf Claim 1},  it remains to rule out that 
\begin{equation} \label{ineq:lsub}
\mass ( \bm{v} ) < \Mcrit.
\end{equation}
We argue by contradiction and suppose that \eqref{ineq:lsub} is true. To reach the desired contradiction, we make use of a detailed profile decomposition of the bounded sequence $\bm{v} = \{ v_n \}_{n=1}^\infty \subset H^{1/2}(\R^3)$. That is, we write the sequence $v_n$ (after passing to a subsequence if necessary) as a sum of finitely many ``bumps''  receding from each other, plus some error term, which can be made arbitrarily small in an appropriate sense. In the Sobolev space $H^1(\R^d)$, results of this kind are well-known~\cite{Struwe-84,BreCor-85,Lions-87,Gerard-98}. Lemma \ref{lem:splitting} in Appendix~\ref{app:CC_frac_Sobolev} below generalizes these results to the case of the fractional Sobolev space $H^{1/2}(\R^3)$. 

More precisely, by Lemma \ref{lem:splitting}, the following holds:  For every $\eps>0$ fixed, there exist an integer $J \in \mathbb{N}$ and bounded sequences $\bm{v}^1 = \{ v_k^1 \}_{k=1}^\infty,\ldots, \bm{v}^J = \{ v^J_k \}_{k=1}^\infty$ in $H^{1/2}(\R^3)$ and $\bm{r}^J = \{ r^J_k\}_{k=1}^\infty$ in $H^{1/2}(\R^3)$ and sequences $\{ x_k^1\}_{k=1}^\infty,\ldots,\{ x_k^J\}_{k=1}^\infty$ in $\R^3$, as well as a sequence of real positive numbers $R_k\to +\ii$ such that the following holds for some subsequence $\{ v_{n_k} \}$:

\begin{enumerate}
 \item[$(i)$] For every $j=1,...,J$ and $k\geq1$, we have $\mathrm{supp} (v_k^j) \subset B(0,R_k)$ and $v^j_k \weakto v^j$ weakly in $H^{1/2}(\R^3)$ and $v^j_k \to v^j$ strongly in $L^p(\R^3)$ for all $2 \leq p < 3$; 
 
\smallskip

\item[$(ii)$] For every $j=1,...,J$ and $k\geq1$, we have $\mathrm{supp} ( r_k^{J}) \subset \R^3\setminus\cup_{j=1}^JB(x_k^j,2R_k)$ and  
$$\mass (\bm{r}^J)\leq \eps;$$

\smallskip

\item[$(iii)$] $\displaystyle\lim_{k\to\ii}\bigg\|v_{n_k}-\sum_{j=1}^Jv^j_k(\cdot-x_k^J)-r_k^J\bigg\|_{H^{1/2}(\R^3)}=0$;

\smallskip

\item[$(iv)$] $\displaystyle\lim_{k\to\ii}\bigg(\pscal{v_{n_k},\sqrt{-\Delta}\,v_{n_k}}-\sum_{j=1}^J\pscal{v^j_{k},\sqrt{-\Delta}\,v^j_{k}}-\pscal{r^J_{k},\sqrt{-\Delta}\,r_{k}^J}\bigg)=0$;

\smallskip

\item[$(v)$] $\displaystyle|x_k^j-x_k^{j'}|\geq 5R_k$ for all $k\geq1$ and $j\neq j'$; 

\smallskip

\item[$(vi)$] $\displaystyle \sup_{j=1,...,J} \left ( \int_{\R^d}|v^j|^2 \right ) \geq \mass (\bm{v})-\eps$.
\end{enumerate}

Up to the correct choice of $J \in \N$, the number $\eps >0$ can be made a small as desired, without changing the functions $v^j$.
Moreover, we can assume in the above decomposition that $v^j\neq0$ for all $j=1,...,J$, since otherwise the corresponding $v_k^j$ can be put into the error term $r_k^J$. Since $\mass (\bm{v}) > 0$ by assumption and choosing $\eps > 0$ sufficiently small, we see from $(vi)$ that there must be at least one $v^j$; or, saying differently, we see that $J\geq1$ holds. Also notice that, by the support properties stated in $(i)$ and $(ii)$, we have $v_{n_k}(\cdot+x_k^j)\wto v^j$ weakly in $H^{1/2}(\R^3)$ for every $j=1,...,J$. Therefore we deduce
$$ \int_{\R^3}|v^j|^2\leq \mass ( \bm{v})<\Mcrit, \quad \mbox{for all $j =1,\ldots, J,$}$$
provided that our assumption \eqref{ineq:lsub} holds.

Now we apply the decomposition result stated above to our problem. First, we claim that
\begin{equation}
\cE(v_{n_k})=\sum_{j=1}^J \cE(v_{k}^j)+\cE(r_{k}^J)+o(1)_{k\to\ii},
\label{eq:splitting_energy} 
\end{equation}
where we recall that $\cE=\cE(\cdot)$ is the translation-invariant massless energy defined in \eqref{eq:def_massless_energy}. The splitting of the kinetic energy is exactly stated in $(iv)$, whereas the potential energy term can be estimated as
\begin{align*}
|\Dcou(|f|^2)-\Dcou(|g|^2)|&=|\Dcou(|f|^2-|g|^2,|f|^2+|g|^2)|\\
&\lesssim \norm{f-g}_{L^{12/5}}\norm{f+g}_{L^{12/5}}\norm{|f|^2+|g|^2}_{L^{6/5}},
\end{align*}
using the Hardy--Littlewood--Sobolev inequality. From $(iii)$ we thus deduce that
$$\Dcou(|v_{n_k}|^2)=\Dcou \left(\bigg|\sum_{j=1}^Jv_k^j(\cdot-x^j_k)+r^J_k\bigg|^2\right)+o(1)_{k\to\ii}.$$
Finally, using the support properties of the $v^j_k$'s and of $r_k^J$, we easily see that 
$$\Dcou \left(\bigg|\sum_{j=1}^Jv_k^j(\cdot-x^j_k)+r^J_k\bigg|^2\right)=\sum_{j=1}^J \Dcou(|v_k^j|^2)+\Dcou(|r^J_k|^2)+o(1)_{k\to\ii},$$
whence \eqref{eq:splitting_energy} follows.

By the strong convergence of $v_k^j$ in $L^{12/5}(\R^3)$ as $k\to\ii$ for every $j=1,...,J$, we have in fact that $\Dcou(|v_k^j|^2)\to \Dcou(|v^j|^2)$ holds. Combining this with Fatou's lemma for the kinetic energy, we deduce that
\begin{equation}
 \liminf_{k\to\ii}\cE(v_k^j)\geq \cE(v^j), \quad \forall j = 1, \ldots, J.
\label{eq:wlsc_1rst_piece} 
\end{equation}
Hence we obtain
\begin{equation}
0=\lim_{k\to\ii}\cE(v_{n_k})\geq \sum_{j=1}^J \cE(v^j)+\liminf_{k\to\ii}\cE(r_{k}^J).
\label{eq:lower_bound_splitting_energy} 
\end{equation}

From Appendix \ref{app:Mcrit}, we recall that the critical mass $\Mcrit >0$ provides the best constant in the inequality 
$$\frac{\Dcou(|f|^2)}4\leq \frac{\int_{\R^3}|f|^2}{2\Mcrit}\int_{\R^3}||\nabla|^{1/2}f|^2 ,$$
which implies, for all $j =1,\ldots,J$, the lower bound
$$\cE(v^j)\geq \frac12 \left(1-\frac{\int_{\R^3}|v^j|^2}{\Mcrit}\right)\int_{\R^3}||\nabla|^{1/2}v^j|^2>0,$$
since $v^j\neq0$ and $\int_{\R^3}|v^j|^2<\Mcrit$. Recalling now \eqref{eq:splitting_energy}, we obtain that
\begin{equation}
\liminf_{k\to\ii}\cE(r_k^J)\leq -\sum_{j=1}^J\cE(v^j) \leq -\cE(v^1)<0.
\label{eq:upper_bound_energy} 
\end{equation}
Next, from Lemma~\ref{lem:subcritical_bound} in Appendix \ref{app:CC_frac_Sobolev}, we invoke the estimate
$$\limsup_{n\to\ii}\| f_n \|_{L^{8/3}}^{8/3}\lesssim  \left(\limsup_{n\to\ii} \| f_n \|_{H^{1/2}}^2\right) \, \mass (\bm{f} )^{1/3}$$
for any fixed bounded sequence in $\bm{f} = \{ f_n \}_{n=1}^\infty \subset H^{1/2}(\R^3)$. On the other hand, the H\"older and Hardy--Littlewood--Sobolev inequalities imply that 
\begin{equation}
\Dcou(|r_k^J|^2)\lesssim \norm{r_k^J}^4_{L^{12/5}}\lesssim \,\norm{r_k^J}^{4/3}_{L^{2}}\norm{r_k^J}^{8/3}_{L^{8/3}}.
\end{equation}
Hence we deduce that
$$\liminf_{k\to\ii}\cE(r_k^J)\geq -C \left(\limsup_{k\to\ii}\norm{r_k^J}^{4/3}_{L^{2}}\right)\left(\limsup_{k\to\ii}\norm{r_k^J}^{2}_{H^{1/2}}\right)\,\mass\big(\bm{r}^J)^{1/3}.$$
Note that, by $(iii)$ and $(iv)$ in the profile decomposition of $\bm{v}= \{ v_n \}_{n=1}^\infty$, we find  
$$\limsup_{k\to\ii}\norm{r_k^J}_{L^{2}}\leq \limsup_{k\to\ii}\norm{v_{n_k}}_{L^{2}} \lesssim \norm{u_0}_{L^{2}} \lesssim 1 $$
and
$$\limsup_{k\to\ii}\norm{r_k^J}^{2}_{H^{1/2}}\leq \limsup_{k\to\ii}\norm{v_{n_k}}^{2}_{H^{1/2}} \lesssim 1+\norm{u_0}_{L^{2}}^2 \lesssim 1.$$
Using $(ii)$, we thus arrive at the following estimate
\begin{equation}
\liminf_{k\to\ii}\cE(r_k^J)\geq -C \eps^{1/3},
\label{eq:final_bound_energy}
\end{equation}
where the constant $C > 0$ only depends on $\| u_0 \|_{L^2}$. By increasing $J\in \N$ if necessary (which does not change the limiting functions $v^j$), we can assume that $\eps >0$ is as small as desired. But then \eqref{eq:final_bound_energy} contradicts \eqref{eq:upper_bound_energy}. Thus assumption \eqref{ineq:lsub} cannot hold; and this completes the proof of {\bf Claim 1}.

\medskip

Now we are ready to finish the proof of Lemma \ref{lem:yt} by the following standard arguments, which also apply in the context of $L^2$-critical NLS; see, e.\,g., \cite{MeTs90,Na90,We86,HmKe05}. Recall that $\bm{v} = \{ v_n \}_{n=1}^\infty$ was defined in \eqref{def:vn} and that $t_n \to T^-$. Note that $\mass(\bm{v}) \geq \Mcrit$ by {\bf Claim 1}. Hence we can find, for any $\eps > 0$, a sequence $\{ x_k \}_{k=1}^\infty$ in $\R^3$ such that (after passing to a subsequence if necessary) we have  $v_{n_k}(\cdot-x^1_k)\wto v$ weakly in $H^{1/2}(\R^3)$ and
\begin{equation} \label{eq:vmass}
\int_{\R^3} |v|^2 \geq \Mcrit-\eps .
\end{equation}
Let $A > 0$ and $\eps > 0$ now be fixed. Since $v_{n_k}(\cdot-x_k)\to v$ strongly in $L^{2}_{\rm loc}(\R^3)$ by Rellich's local compactness and undoing the rescaling in \eqref{def:vn}, we deduce that
\begin{equation}
\lim_{k \to \infty} \int_{|x-y_k|\leq \sigma_{n_k}^{-1} A} |u(t_{n_k}, x)|^2 = \int_{|x| \leq A} |v|^2  ,
\end{equation}
for any $A > 0$ fixed and the sequence of translations $y_k = -\sigma_{n_k} x_k$. Next, let $\lambda=\lambda(t)$ be any function such that 
$$
\mbox{$\lambda(t) \to 0$ and $\displaystyle\lambda(t) \int_{\R^3} | |\nabla|^{1/2} u(t) |^2 \to +\infty$ as $t \to T^-$,}
$$
e.\,g., we can choose $\lambda(t) =\left(\| |\nabla|^{1/2} u(t) \|_{L^2}^2\right)^{-1/2}$. In particular, we have $\lambda(t_n) \geq \sigma_n^{-1} A$ for  $n \geq 1$ sufficiently large. By eliminating the sequence $\{ y_n \}_{n=1}^\infty$ through taking the supremum over $y \in \R^3$, we conclude that
\begin{equation*}
\lim_{k\to\ii} \sup_{y \in \R^3} \int_{|x-y| \leq \lambda(t_{n_k})} |u(t_{n_k},x)|^2   \geq \int_{|x| \leq A} |v|^2 .
\end{equation*}
Since $A > 0$ and $t_n\to T^-$ can be chosen arbitrarily, we obtain from \eqref{eq:vmass} and, by sending $\eps \to 0$, the lower bound
\begin{equation}
\liminf_{t\to T^-} \sup_{y \in \R^3} \int_{|x-y| \leq \lambda(t)} |u(t,x)|^2   \geq \Mcrit. 
\end{equation}
Notice that, for any $t \in [0,T)$ fixed, the function $y \mapsto \sup_{y \in \R^3} \int_{|x-y|\leq \lambda(t)} |u(t,x)|^2$ is continuous and vanishes as $|y| \to +\infty$. Hence there exists $y(t) \in \R^3$ such that
\begin{equation}
\liminf_{t \to T^-} \int_{|x-y(t)| \leq \lambda(t)} |u(t,x)|^2   \geq \Mcrit.
\end{equation}
Furthermore, for any $R > 0$ given, the inclusion $B(x-y(t), \lambda(t)) \subseteq B(x-y(t),R)$ holds for $|T-t|>0$ sufficiently small, since $\lambda(t) \to 0$. The proof of Lemma \ref{lem:yt} is now complete. \end{proof}

Now, we use Lemma \ref{lem:tight} to derive the following \emph{a priori} bound for the function $y : [0,T) \to \R^3$, which was provided by Lemma \ref{lem:yt}. 

\begin{lemma} \label{lem:fini}
Let $y : [0,T) \to \R^3$ satisfy the conclusion of Lemma \ref{lem:yt}. Then we have
$$
\sup_{t \in [t_0, T)} |y(t)| \lesssim 1 ,
$$
for some $t_0 \in [0,T)$ sufficiently close to $T$.
\end{lemma}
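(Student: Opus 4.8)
The plan is to combine the mass–concentration statement of Lemma~\ref{lem:yt} with the tightness of the family $\{|u(t)|^2\}_{t\in[0,T)}$ established in Lemma~\ref{lem:tight}, and to rule out by contradiction the possibility that $y(t)$ escapes to infinity as $t\to T^-$. The underlying picture is that Lemma~\ref{lem:yt} forces an amount of mass at least $\Mcrit$ to sit inside any fixed-radius ball around $y(t)$ as $t\to T^-$, whereas Lemma~\ref{lem:tight} confines almost all of the conserved total mass to a single fixed ball $B(0,R_0)$; these two facts become incompatible as soon as $|y(t)|$ is appreciably larger than $R_0$.

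Concretely, I would first apply Lemma~\ref{lem:yt} with $R=1$: since $\liminf_{t\to T^-}\int_{|x-y(t)|\leq1}|u(t,x)|^2\,dx\geq\Mcrit$, there exists $t_0\in[0,T)$ such that
$$\int_{|x-y(t)|\leq1}|u(t,x)|^2\,dx\geq\frac{\Mcrit}{2}\qquad\text{for all }t\in[t_0,T).$$
Next I would apply Lemma~\ref{lem:tight} with $\eps=\Mcrit/4$ to obtain a radius $R_0>0$ such that
$$\int_{|x|\geq R_0}|u(t,x)|^2\,dx\leq\frac{\Mcrit}{4}\qquad\text{for all }t\in[0,T).$$

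Then I argue by contradiction: if $|y(t)|>R_0+1$ for some $t\in[t_0,T)$, the ball $B(y(t),1)$ is contained in $\{|x|\geq R_0\}$, so the two displayed inequalities give $\Mcrit/2\leq\int_{|x-y(t)|\leq1}|u(t,x)|^2\,dx\leq\int_{|x|\geq R_0}|u(t,x)|^2\,dx\leq\Mcrit/4$, which is absurd. Hence $|y(t)|\leq R_0+1$ for every $t\in[t_0,T)$, i.e.\ $\sup_{t\in[t_0,T)}|y(t)|\lesssim1$, the implicit constant depending only on $u_0$ and $m$ through the tightness radius $R_0$ and through $\Mcrit$.

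There is no serious obstacle here; the only points deserving (mild) attention are choosing the thresholds so that the concentration lower bound strictly exceeds the tail upper bound (any such splitting of $\Mcrit$ works), and observing that since Lemma~\ref{lem:yt} only yields a $\liminf$, the bound can be asserted merely on a left-neighborhood $[t_0,T)$ of the blowup time — which is exactly the form of the conclusion.
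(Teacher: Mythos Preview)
Your proof is correct and follows essentially the same approach as the paper: combine the tightness from Lemma~\ref{lem:tight} with the mass concentration from Lemma~\ref{lem:yt} to derive a contradiction if $|y(t)|$ is too large. The only cosmetic differences are that the paper uses the same radius $R_*$ for both the tightness cutoff and the concentration ball (whereas you use $R=1$ and $R_0$), and the paper phrases the contradiction via a sequence $t_n\to T^-$ with $|y(t_n)|\to\infty$ rather than directly extracting $t_0$ from the $\liminf$; neither difference is material.
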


\begin{proof}
By Lemma \ref{lem:tight}, there exists $R_* > 0$ sufficiently large that 
\begin{equation} \label{ineq:tight}
\int_{|x| \geq R_*} |u(t,x)|^2 \leq \frac{1}{2} \Mcrit  \quad \mbox{for all $t \in [0,T)$}.
\end{equation}
Suppose now that $|y(t_n)| \to +\infty$ along some sequence $t_n \to T^-$. Without loss of generality, we can assume that $|y(t_n)| \geq 2R_*$ for all $n \in \N$, which implies that $\{ | x-y(t_n) | \leq R_* \} \subseteq \{ |x| \geq R_* \}$. Hence, by \eqref{ineq:tight}, we obtain
\begin{equation*} 
 \int_{|x-y(t_n)| \leq R_*} |u(t_n, x)|^2  \leq \int_{|x| \geq R_*} |u(t_n,x)|^2  \leq \frac{1}{2} \Mcrit.
\end{equation*} 
However, by assumption on $y(t)$, we also find
$$
\liminf_{n \to \infty} \int_{|x-y(t_n)| \leq R_*} |u(t_n,x)|^2 \geq \Mcrit,
$$
which is the desired contradiction. \end{proof}

Finally, we conclude the proof of Part $(iii)$ of Theorem \ref{thm:nonrad} as follows. Let $y : [0,T) \to \R^3$ be given by Lemma \ref{lem:yt}. Thanks to Lemma \ref{lem:fini}, we can assume that $y(t_n) \to x_*$ for some $x_* \in \R^3$ and some sequence $t_n \to T^-$. Let $R > 0$ be given. Since $|u(t_n)|^2 \weakto \mu$ weakly in $\mathcal{M}(\R^3)$, we deduce that
\begin{align*}
 \mu( \{ |x - x_*| \leq R \} ) & \geq \limsup_{n \to \infty} \int_{|x-x_*| \leq R} |u(t_n,x)|^2 \geq \\ & \geq \liminf_{n \to \ii} \int_{|x-x_*| \leq R} |u(t_n,x)|^2 \geq \Mcrit .
\end{align*}
Here the last inequality readily follows from Lemma \ref{lem:yt} and the fact that $y(t_n) \to x_*$.  Since $R > 0$ was arbitrary, this shows that $\mu(\{ x_*\}) \geq \Mcrit$ holds for the finite measure $\mu \in \mathcal{M}(\R^3)$. The proof of Theorem \ref{thm:nonrad} is  now complete. \hfill $\blacksquare$

\section{Proof of Theorem \ref{thm:rad}}\label{sec:proof_radial}

Suppose that the initial datum $u_0 = u_0(|x|) \in H^{1/2}(\R^3)$ is radially symmetric and that its corresponding radial solution $u \in C^0([0,T); H^{1/2}(\R^3))$ of \eqref{eq:pde} blows up at finite time $0 < T < +\infty$. Note that, by uniqueness of the solution, we deduce that $u=u(t,|x|)$ is radially symmetric for all $t \in [0,T)$. 

Let $\zeta \in C^\infty(\R^3)$ be a radial and smooth cutoff-function satisfying $0 \leq \zeta \leq 1$ and
\begin{equation}
\zeta(|x|) \equiv 0 \quad \mbox{for $|x| \leq 1$}, \quad \zeta(|x|) \equiv 1 \quad \mbox{for $|x| \geq 2$}. 
\end{equation}
For $R> 0$ given, we define the rescaled version $\zeta_R := \zeta(\cdot/R)$ and we consider the function $u_R : [0,T) \times \R^3 \to \C$ given by
\begin{equation}
u_R(t,x) := \zeta_R(x) u(t,x) .
\end{equation}
A calculation shows that $u_R \in C^0([0,T); H^{1/2}(\R^3))$ satisfies 
\begin{equation} \label{eq:ur}
i \partial_t u_R = \sqrt{-\Delta +m^2} \, u_R + V_u u_R + F_R ,
\end{equation}
where 
\begin{equation}
V_u := - |x|^{-1} \ast |u|^2, \quad F_R:= [\zeta_R,\sqrt{-\Delta+m^2}] u .
\end{equation}
Recall that $[A,B] \equiv AB-BA$ denotes the commutator of $A$ and $B$.  By Duhamel's principle, we deduce from \eqref{eq:ur} that
\begin{equation} \label{eq:Duhamel}
u_R(t) = U(t) u_R(0) - i \int_0^t U(t-s) \bigg(V_u(s) u_R(s) + F_R(s)\bigg) \, ds,
\end{equation}
where $U(t) = e^{-it \sqrt{-\Delta +m^2}}$ denotes the free propagator generated by $\sqrt{-\Delta+m^2}$. Next, by $L^2$-mass conservation for $u=u(t,x)$ and using the commutator estimate $[\zeta_R, \sqrt{-\Delta+m^2} ] \leq C \| \nabla \zeta_R \|_\infty$, we deduce 
\begin{equation} \label{ineq:F}
\| F_R \|_{L^\infty_t L^2_x} \lesssim \| \nabla \zeta_R \|_{L^\infty}  \| u_0 \|_{L^2} \lesssim \frac{1}{R} .
\end{equation}
As for the other term $V_u(s) u_R(s)$, we first use Newton's theorem to derive the following pointwise estimate on $V_u$,
\begin{equation}
V_u(t,x)=\int_{\R^3}\frac{|u(t,y)|^2}{|x-y|}dy
=\int_{\R^3}\frac{|u(t,y)|^2}{\max(|x|,|y|)}dy
\leq \frac{\displaystyle\int_{\R^3}|u(t,y)|^2\,dy}{|x|}
=\frac{\displaystyle\int_{\R^3}|u_0|^2}{|x|},
\label{eq:Newton}
\end{equation}
by radiality of $u(t)$ and mass conservation. Using this bound and the fact that $V_u u_R = V_u \zeta_{R/2} u_R$ holds, thanks to the support properties of $\zeta_R$, we deduce that
\begin{equation} \label{ineq:VR}
\| V_u u_R\|_{L^\infty_t L^2_x} \leq \| V_u \zeta_{R/2} \|_{L^\infty_t L^2_x} \| u_R \|_{L^\infty_t L^2_x} \leq \left ( 2\,\frac{\int_{\R^3} |u_0|^2}{R} \right ) \int_{\R^3} | u_0|^2 \lesssim \frac{1}{R}.
\end{equation}
Combining now \eqref{ineq:F} and \eqref{ineq:VR}, we can pass to the limit $t \to T^-$ in \eqref{eq:Duhamel} and conclude that $u_R(t)$ converges strongly in $L^2(\R^3)$ as $t \to T^-$. From this fact and that $u(t) \weakto u_*$ weakly in $L^2(\R^3)$ by Theorem \ref{thm:nonrad}, we obtain that
\begin{equation} \label{cv:L2}
\mbox{$u(t) \to u_*$ strongly in $L^2( \{ |x| \geq R \})$ as $t \to T^-$},
\end{equation}
for any $R >0$ fixed. This completes the proof of Part $(i)$ in Theorem \ref{thm:rad}.

To prove Part $(ii)$ of Theorem \ref{thm:rad}, we deduce from the general result in Theorem \ref{thm:nonrad} and the strong convergence of $u(t)$ in $L^2( \{ |x| \geq R \})$ that the limiting measure satisfies
\begin{equation}
\mu = M \delta_{x=0} + |u_*|^2
\end{equation}
with $M= \mu(\{ 0 \}) \geq \Mcrit$. 
This completes the proof of Part $(ii)$ of Theorem \ref{thm:rad}.

Under the additional assumption that $x \, u_0 \in H^{1/2}(\R^3)$ holds, we can use the virial estimate in \cite[Eqn.~(1.9)]{FrLe07} to conclude that
\begin{equation}
\sum_{j=1}^3 \left \langle u(t), x_j \sqrt{-\Delta+m^2} x_j u(t) \right \rangle \leq 2E[u_0] t^2 + C_1 t + C_2 \lesssim 1 , \quad \mbox{for $t \in [0,T)$}.
\end{equation}
Using this estimate, we conclude that $x u(t,\cdot) \weakto x u_*$ weakly in $H^{1/2}(\R^)$ as $t \to T^-$. From this, it is easy to establish that $(i')$ holds. The proof of Theorem \ref{thm:rad} is now complete.~\hfill $\blacksquare$

\appendix 
\section{Definition of the Critical Mass}
\label{app:Mcrit}

From \cite{LiYa87,Le07} we recall the interpolation estimate
\begin{equation} \label{ineq:GN}
\int_{\R^3} \big ( |x|^{-1} \ast |f|^2 \big ) |f|^2 \leq  C_{\mathrm{opt}} \left ( \int_{\R^3} | |\nabla|^{1/2} f |^2 \right ) \left ( \int_{\R^3} |f|^2 \right ) ,
\end{equation}
for all $f \in H^{1/2}(\R^3)$, where $C_{\mathrm{opt}} > 0$ denotes the optimal constant. The critical mass $\Mcrit > 0$ is now defined by
\begin{equation}
\Mcrit := \frac{2}{C_{\mathrm{opt}}} = \inf_{f \in H^{1/2}, f \not \equiv 0} \frac{2 \left ( \int_{\R^3} | |\nabla|^{1/2} f |^2 \right ) \left ( \int_{\R^3} |f|^2 \right )}{\int_{\R^3} \big ( |x|^{-1} \ast |f|^2 \big ) |f|^2}   .
\end{equation}
As a direct consequence, we obtain the following bound for the energy
\begin{equation}
E[u] \geq \frac{1}{2} \left ( 1- \frac{ \int_{\R^3} |u|^2 }{\Mcrit} \right ) \int_{\R^3} | |\nabla|^{1/2} u|^2 .
\end{equation}
In conjunction with the local well-posedness and the conservation of $E[u]$ and $M[u] = \int |u|^2$ for the evolution equation \eqref{eq:pde}, this bound readily yields the global well-posedness criterion stated in \eqref{eq:glo}. Furthermore, it is not difficult to see that $\Mcrit > 0$ satisfies \eqref{eq:energy_critical_mass}.

Following arguments in \cite{LiYa87, Le07}, we can deduce that there exists a positive, radially symmetric minimizer $Q= Q(|x|) > 0$ in $H^{1/2}(\R^3)$ for inequality \eqref{ineq:GN}. Any such optimizer $Q$ is also referred to as a {\em ground state} for inequality \eqref{ineq:GN}. After a suitable rescaling $Q \mapsto a Q(b \cdot)$ with some $a > 0$ and $b > 0$, any such ground state $Q \in H^{1/2}(\R^3)$ is found to satisfy
\begin{equation} \label{eq:Qgr}
\sqrt{-\Delta} \, Q + Q - (|x|^{-1} * |Q|^2 ) Q = 0.
\end{equation}
We have the following fact.

\begin{lemma}\label{lem:highest_mass}
If $Q \in H^{1/2}(\R^3)$ with $Q \not \equiv 0$ satisfies equation \eqref{eq:Qgr} and optimizes inequality \eqref{ineq:GN}, then  $\int_{\R^3} |Q|^2 = \Mcrit$ holds. 
\end{lemma}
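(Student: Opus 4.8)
The plan is to exploit the Pohozaev-type (virial) identities satisfied by any solution $Q$ of \eqref{eq:Qgr} in order to pin down the three quantities $\int |Q|^2$, $\int ||\nabla|^{1/2}Q|^2$ and $\Dcou(|Q|^2)=\int(|x|^{-1}\ast|Q|^2)|Q|^2$ up to a single scaling parameter, and then to substitute these relations into the definition of $\Mcrit$. Concretely, I would first test the equation \eqref{eq:Qgr} against $Q$ itself, which gives the algebraic relation
\begin{equation*}
\int_{\R^3} ||\nabla|^{1/2}Q|^2 + \int_{\R^3}|Q|^2 = \Dcou(|Q|^2).
\end{equation*}
Next I would derive the scaling (Pohozaev) identity by testing against the generator of dilations, i.e.\ computing $\frac{d}{d\lambda}\big|_{\lambda=1}$ of the action evaluated on $\lambda^{3/2}Q(\lambda\cdot)$, using that $|\nabla|^{1/2}$ has homogeneity $1$, that $\int|Q|^2$ is scale invariant, and that the convolution term scales like $\lambda$. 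Since $Q$ is a critical point of the action $\mathcal A[f]=\tfrac12\int||\nabla|^{1/2}f|^2+\tfrac12\int|f|^2-\tfrac14\Dcou(|f|^2)$ (whose Euler--Lagrange equation is exactly \eqref{eq:Qgr}), this derivative must vanish, yielding a second linear relation among the three quantities. Together with the first relation this forces, say,
\begin{equation*}
\int_{\R^3}|Q|^2 = \tfrac12\,\Dcou(|Q|^2) - \tfrac12\int_{\R^3}||\nabla|^{1/2}Q|^2
\end{equation*}
and a companion formula; the precise bookkeeping is routine.

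The second ingredient is that $Q$, being an optimizer of \eqref{ineq:GN}, realizes equality in that inequality with the optimal constant $C_{\mathrm{opt}}=2/\Mcrit$, i.e.
\begin{equation*}
\Dcou(|Q|^2) = \frac{2}{\Mcrit}\left(\int_{\R^3}||\nabla|^{1/2}Q|^2\right)\left(\int_{\R^3}|Q|^2\right).
\end{equation*}
Combining this equality with the two virial relations from the first paragraph, I would solve the resulting small linear/algebraic system for $\int_{\R^3}|Q|^2$. All three "unknowns" are positive (since $Q\not\equiv 0$), so there is no ambiguity in signs, and the system is non-degenerate; the output is precisely $\int_{\R^3}|Q|^2 = \Mcrit$. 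An alternative, essentially equivalent route is to observe directly that the quotient defining $\Mcrit$ is scale invariant, evaluate it at $Q$, and use the Pohozaev identity to simplify the numerator and denominator until the $Q$-dependence cancels and only $\Mcrit$ remains; I would probably present whichever of these is shorter.

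The step I expect to require the most care is the rigorous derivation of the Pohozaev identity for the \emph{nonlocal} operator $|\nabla|^{1/2}$: one must justify differentiating $\int||\nabla|^{1/2}(\lambda^{3/2}Q(\lambda\cdot))|^2\,dx = \lambda \int ||\nabla|^{1/2}Q|^2\,dx$ and, more importantly, that $Q\in H^{1/2}$ has enough regularity and decay (which follows from the references \cite{LiYa87,Le07} cited just above, together with elliptic regularity for \eqref{eq:Qgr}) for the pairing $\langle \sqrt{-\Delta}\,Q, x\cdot\nabla Q\rangle$ to be well defined and to equal $-\tfrac12\int||\nabla|^{1/2}Q|^2$. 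Once the homogeneities are correctly accounted for, the remaining computation is purely algebraic. I would state the needed regularity of $Q$ as a consequence of the cited works and not reprove it.
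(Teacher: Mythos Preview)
Your approach is essentially the same as the paper's: derive the Pohozaev identity $2\int||\nabla|^{1/2}Q|^2=\Dcou(|Q|^2)$ (the paper obtains it by multiplying \eqref{eq:Qgr} with $\overline{Q}+x\cdot\nabla\overline{Q}$, which is the infinitesimal version of your dilation argument), combine it with the optimizer equality $\Dcou(|Q|^2)=C_{\mathrm{opt}}\big(\int||\nabla|^{1/2}Q|^2\big)\big(\int|Q|^2\big)$, and read off $\int|Q|^2=2/C_{\mathrm{opt}}=\Mcrit$; the paper likewise justifies the Pohozaev step via a bootstrap to $Q\in H^s$ for all $s\geq 1/2$ and decay $|Q|,|\nabla Q|\lesssim\langle x\rangle^{-4}$. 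One small remark: your first relation from testing \eqref{eq:Qgr} against $Q$ is superfluous---Pohozaev plus the optimizer equality already determine $\int|Q|^2$ without it.
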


\begin{proof}
A simple bootstrap argument using \eqref{eq:Qgr} shows that $Q \in H^s(\R^3)$ for all $s \geq 1/2$. Moreover, using the pointwise decay estimate for the resolvent $(\sqrt{-\Delta} + 1) \leq \langle x \rangle^{-4}$ in $\R^3$, we can deduce the bounds $|Q(x)|, |\nabla Q(x)| \lesssim \langle x \rangle^{-4}$. Hence we can multiply \eqref{eq:Qgr} with $\overline{Q}+x \cdot \nabla \overline{Q}$ and integrate by parts. Doing so and after taking the real part, we obtain the ``Pohozaev'' identity
$$
2 \int_{\R^3} ||\nabla|^{1/2} Q|^2 -  \int_{\R^3} \big (|x|^{-1} \ast |Q|^2 \big ) |Q|^2 = 0.
$$
Since $Q \in H^{1/2}(\R^3)$ also optimizes \eqref{ineq:GN}, we deduce  that $\int_{\R^3} |Q|^2 = 2/C_{\mathrm{opt}}=\Mcrit$ holds. \end{proof}

\section{Localization and Lack of Compactness in $H^s(\R^d)$}
\label{app:CC_frac_Sobolev}

The study of locally compact sequences (e.\,g., bounded sequences in $H^1(\R^d)$) which lack global compactness on $\R^d$ (due to translations) is a very classical subject. In this appendix, we classify such loss of global compactness on $\R^d$ in the setting of fractional Sobolev spaces $H^s(\R^d)$ for $0<s\leq1$. Most of the following results are well known to experts. Here our presentation mainly follows ideas of Lieb~\cite{Lieb-83} and Lions~\cite{Lions-84a,Lions-84b} developed for $H^1(\R^d)$, which we combine with localization formulas for fractional Laplacians that we previously introduced in~\cite{LeLe10}. Of course, the results presented below can be  readily  generalized to $H^s(\R^d)$ for any $s> 0$, by a simple induction argument on $[s]$.

In the following, let $d\geq1$ be a fixed space dimension.

\subsection{Commutator bounds and localization formulas}
We start with some commutator estimates for the pseudo-differential operators $(1-\Delta)^{\frac{s}{2}}$ and $(-\Delta)^\frac{s}{2}$. In fact, there are multiple useful bounds in the literature for commutators taking the form $[(a-\Delta)^{\frac{s}{2}},\chi]$, where $\chi$ a sufficiently smooth function on $\R^d$. Here, we are interested in the following commutator bound for $0<s \leq 1$.

\begin{lemma}[Commutator bounds]\label{lem:commutator_bound}
Let $0<s\leq 1$. There exists a constant $C >0$ such that, for all $\chi \in \dot{W}^{1,\infty}(\R^d)$, we have the commutator estimate
\begin{equation}
\norm{\left[(1-\Delta)^{\tfrac{s}2}\,,\, \chi\right]}\leq C\norm{\nabla\chi}_{L^\ii(\R^d)},
\label{eq:commutator_bound}
\end{equation}
where the norm on the left side is the usual operator norm on $L^2(\R^d)$.
\end{lemma}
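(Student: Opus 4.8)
The plan is to reduce the estimate, via a resolvent (Balakrishnan) representation of $(1-\Delta)^{s/2}$, to a single uniform bound on a sandwiched resolvent commutator, and then to isolate the genuinely hard case, the borderline exponent $s=1$, which is Calder\'on's theorem. Write $A:=1-\Delta$. For $0<s<2$ one has the scalar identity $\lambda^{s/2}=c_s\int_0^\infty t^{s/2-1}\,\lambda(\lambda+t)^{-1}\,dt$ for $\lambda\ge1$, with $c_s=\pi^{-1}\sin(\pi s/2)$, hence by the spectral theorem
\[
(1-\Delta)^{s/2}=c_s\int_0^\infty t^{s/2-1}\,A(A+t)^{-1}\,dt .
\]
By a routine approximation (replacing $\chi$ by a bounded truncation, e.g.\ smooth and compactly supported, which does not increase $\|\nabla\chi\|_{L^\infty}$ nor affect the final bound) we may commute with $\chi$ termwise. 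Using $A(A+t)^{-1}=1-t(A+t)^{-1}$ together with the identity $[(A+t)^{-1},\chi]=-(A+t)^{-1}[-\Delta,\chi](A+t)^{-1}$, the commutator collapses to
\[
[(1-\Delta)^{s/2},\chi]=c_s\int_0^\infty t^{s/2}\,(A+t)^{-1}[-\Delta,\chi](A+t)^{-1}\,dt .
\]

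The key step is the pointwise-in-$t$ estimate $\big\|(A+t)^{-1}[-\Delta,\chi](A+t)^{-1}\big\|_{L^2\to L^2}\lesssim\|\nabla\chi\|_{L^\infty}\,(1+t)^{-3/2}$. To prove it I write $[-\Delta,\chi]$ in \emph{divergence form},
\[
[-\Delta,\chi]=-\sum_{j=1}^d\big(\partial_j\,M_{\partial_j\chi}+M_{\partial_j\chi}\,\partial_j\big),
\]
$M_g$ denoting multiplication by $g$; crucially only $\nabla\chi$ enters. I then move each derivative onto the adjacent resolvent, with which it commutes, and use the elementary multiplier bounds $\|(A+t)^{-1}\|\le(1+t)^{-1}$ and $\|\partial_j(A+t)^{-1}\|\le\tfrac12(1+t)^{-1/2}$ (the latter because $1+t+|\xi|^2\ge2\sqrt{1+t}\,|\xi|$); this bounds each of the two resulting terms by $\lesssim\|\nabla\chi\|_{L^\infty}(1+t)^{-3/2}$. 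Inserting this in the integral representation gives
\[
\big\|[(1-\Delta)^{s/2},\chi]\big\|\lesssim\|\nabla\chi\|_{L^\infty}\int_0^\infty t^{s/2}(1+t)^{-3/2}\,dt ,
\]
and the $t$-integral converges precisely when $0<s<1$, which settles that range.

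The main obstacle is the endpoint $s=1$: there the integral above diverges logarithmically at $t=\infty$, and no rearrangement of the resolvent integral closes this gap. The required bound $\|[\sqrt{1-\Delta},\chi]\|_{L^2\to L^2}\lesssim\|\nabla\chi\|_{L^\infty}$ (equivalently for $\sqrt{-\Delta}$) is exactly Calder\'on's classical commutator estimate, which I would invoke directly (see \cite{Calderon-65,St93}); a self-contained proof is possible through a Littlewood--Paley/paraproduct decomposition of $\chi\,(1-\Delta)^{1/2}f$, but it is not short and genuinely requires Calder\'on--Zygmund technology rather than the soft integral estimates above. Granting the $s=1$ case, the whole range $0<s\le1$ can then be obtained uniformly by the factorisation $(1-\Delta)^{s/2}=(1-\Delta)^{(s-1)/2}(1-\Delta)^{1/2}$, which gives
\[
[(1-\Delta)^{s/2},\chi]=\big[(1-\Delta)^{(s-1)/2},\chi\big](1-\Delta)^{1/2}+(1-\Delta)^{(s-1)/2}\big[(1-\Delta)^{1/2},\chi\big] ;
\]
the second term is controlled by $\|(1-\Delta)^{(s-1)/2}\|\le1$ and the Calder\'on estimate, while the first is handled by the resolvent computation above applied to the negative power $(1-\Delta)^{(s-1)/2}$ (now the relevant integral $\int_0^\infty t^{(s-1)/2}(1+t)^{-1}\,dt$ converges since $(s-1)/2\in[-\tfrac12,0)$), using additionally $\|(A+t)^{-1}(1-\Delta)^{1/2}\|\lesssim(1+t)^{-1/2}$ and $\|\partial_j(A+t)^{-1}(1-\Delta)^{1/2}\|\le1$. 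The only non-soft input in the argument is thus the $s=1$ Calder\'on endpoint.
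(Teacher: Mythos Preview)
Your argument is correct and coincides with the paper's own proof: both use the resolvent (Balakrishnan) representation of $(1-\Delta)^{s/2}$, reduce to $(A+t)^{-1}[-\Delta,\chi](A+t)^{-1}$, write $[-\Delta,\chi]$ in divergence form involving only $\nabla\chi$, and bound by $\|\nabla\chi\|_{L^\infty}\int_0^\infty t^{s/2}(1+t)^{-3/2}\,dt$, which converges precisely for $0<s<1$, deferring the endpoint $s=1$ to Calder\'on's theorem. Your final factorization paragraph is an optional add-on (the lemma allows $C$ to depend on $s$), but it is a correct observation---the $\sin(\pi(s-1)/2)$ prefactor in the representation of the negative power exactly cancels the divergence of $\int_0^\infty t^{(s-1)/2}(1+t)^{-1}\,dt$ as $s\to1^-$, so one indeed gets a bound uniform in $s\in(0,1]$.
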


\begin{remark} \em
For $s=1$, the estimate~\eqref{eq:commutator_bound} is due to Calder\'on~\cite{Calderon-65} by singular integral operator (SIO) and complex variable methods; see also \cite{St93} for a modern exposition of this subject matter. Below we provide a simple proof when $0<s<1$ without resorting to SIO methods. 
\end{remark}

\begin{proof}[Proof of Lemma~\ref{lem:commutator_bound}]
We follow some arguments in \cite{LeLe10}. First, we recall the well-known formula
\begin{equation}
x^s=\frac{\sin(\pi s)}\pi\int_0^\ii \frac{x}{x+t}\;t^{s-1}\,dt .
\label{eq:integral_formula} 
\end{equation}
By this formula and the spectral theorem applied to $p=-i\nabla$, we obtain
\begin{align*}
\left[(1-\Delta)^{\tfrac{s}2}\,,\, \chi\right]&=\frac{\sin(\pi s/2)}\pi\int_0^\ii \frac{1}{1+|p|^2+t}\big[|p|^2,\chi\big]\frac{1}{1+|p|^2+t}\, t^{\frac{s}2}\,dt\\
&=-i\;\frac{\sin(\pi s/2)}\pi\int_0^\ii \!\!\frac{1}{1+|p|^2+t}\big(p\cdot(\nabla\chi)+(\nabla\chi)\cdot p\big)\frac{1}{1+|p|^2+t}\, t^{\frac{s}2}\,dt.
\end{align*}
Next, we observe
\begin{equation*}
\norm{\frac{p}{1+|p|^2+t}\cdot(\nabla\chi)\frac{1}{1+|p|^2+t}}\leq \norm{\frac{|p|}{1+|p|^2+t}}\norm{\nabla\chi}\norm{\frac{1}{1+|p|^2+t}} =\frac{\norm{\nabla\chi}_{L^\ii(\R^d)}}{2(1+t)^{\frac32}},
\end{equation*}
which gives the estimate
$$\norm{\left[(1-\Delta)^{\tfrac{s}2}\,,\, \chi\right]}\leq \frac{\sin(\pi s/2)}\pi \norm{\nabla\chi}_{L^\ii(\R^d)} \int_0^\ii\frac{t^{\frac{s}2}}{(1+t)^{\frac32}}\,dt .$$
This completes the proof of~\eqref{eq:commutator_bound} for $0<s<1$. As we remarked above, the desired bound for $s=1$ can be found in \cite{Calderon-65,St93}.
\end{proof}

\begin{remark} \label{rem:comm} \em
Writing $\nabla\chi=|p|^{(1-s)/2-\eps}|p|^{(s-1)/2+\eps}\nabla\chi$ and using the Hardy--Littlewood--Sobolev inequality, our proof can obviously be generalized to show that 
\begin{equation}
\norm{\left[(1-\Delta)^{\frac{s}2}\,,\, \chi\right]}\leq C\norm{\nabla\chi}_{L^p(\R^d)}
\end{equation}
for all $d/(1-s)<p\leq\ii$.

When $(1-\Delta)^{s/2}$ is replaced with $(-\Delta)^{s/2}$, it is known~\cite{Calderon-65,CoiMey-75} for $s=1$ that 
\begin{equation}
\norm{\left[(-\Delta)^{\frac{1}2}\,,\, \chi\right]}\leq C\norm{\nabla\chi}_{L^\ii(\R^d)}.
\end{equation}
For $0<s<1$, our proof of Lemma~\ref{lem:commutator_bound} above easily shows that $\big[(-\Delta)^{\frac{s}{2}}\,,\, \chi\big]$ is a bounded operator when, for instance, $\nabla\chi\in L^{d/(1-s)-\eps}(\R^d)\cap L^{d/(1-s)+\eps}(\R^d)$.
\end{remark}

Next, we discuss bounds for double commutators of the form $[\chi,[\chi,(1-\Delta)^{s}]]$, which naturally appear when localizing a fractional kinetic energy. The following \emph{localization formula} is especially useful when dealing with an infinite partition of unity. Its proof is again a simple calculation based on the integral representation~\eqref{eq:integral_formula}, see~\cite{LeLe10}.

\begin{lemma}[Localization in fractional Sobolev spaces~\cite{LeLe10}]\label{lem:localization}
Let $\chi\in W^{1,\ii}(\R^d)$ be a real-valued function and suppose $0<s<1$. 
Then we have the localization formula
\begin{align}
\frac{[\chi\,,\,[\chi\,,\,(1-\Delta)^{s}]]}2&=\frac{\chi^2(1-\Delta)^{s}+ (1-\Delta)^{s}\chi^2}2-\chi (1-\Delta)^{s}\chi\nonumber\\
&=-\frac{\sin(\pi s)}\pi\;\int_0^\ii \frac{1}{t+1-\Delta}|\nabla\chi|^2\frac{1}{t+1-\Delta}t^s\,dt+L_\chi,
\label{eq:localization}
\end{align}
where 
$$L_\chi=\frac{\sin(\pi s)}\pi\;\int_0^\ii \frac{1}{t+1-\Delta}[-\Delta,\chi]\frac{1}{t+1-\Delta}[\chi,-\Delta]\frac{1}{t+1-\Delta}\;t^s\,dt$$ 
is a nonnegative operator satisfying 
\begin{equation}
0\leq L_\chi\leq 4s\norm{\nabla\chi}^2_{L^\ii(\R^d)}.
\label{eq:bound_L_chi} 
\end{equation}
\end{lemma}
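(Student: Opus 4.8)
The plan is to express everything through the resolvent $R_t:=(t+1-\Delta)^{-1}$ via the integral representation~\eqref{eq:integral_formula}, and then carry out the double commutator with $\chi$ by elementary resolvent identities. First, the first equality in~\eqref{eq:localization} is pure algebra: for a self-adjoint $A$ and a real multiplication operator $\chi$ one has $[\chi,[\chi,A]]=\chi^2A+A\chi^2-2\chi A\chi$, and taking $A=(1-\Delta)^s$ and dividing by $2$ gives it. For the second equality I would write $\frac{x}{x+t}=1-\frac{t}{x+t}$ in~\eqref{eq:integral_formula} and apply functional calculus to $x=1-\Delta$, whose spectrum lies in $[1,\infty)$, so that $R_t$ is bounded with $\|R_t\|\le(t+1)^{-1}$. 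The constant term commutes with $\chi$ and hence disappears from the double commutator, so that $[\chi,[\chi,(1-\Delta)^s]]=-\frac{\sin(\pi s)}{\pi}\int_0^\infty[\chi,[\chi,R_t]]\,t^{s}\,dt$. One should check the (routine) interchange of this operator integral with the two commutators, most safely first on $\mathcal S(\R^d)$ and then by density.

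Next I would compute the integrand. Using $[\chi,R_t]=R_t[t+1-\Delta,\chi]R_t=R_t[-\Delta,\chi]R_t$ (the constant $t+1$ commuting with $\chi$), the Leibniz rule $[\chi,XYZ]=[\chi,X]YZ+X[\chi,Y]Z+XY[\chi,Z]$, and the one-line identity $[\chi,[-\Delta,\chi]]=2|\nabla\chi|^2$, I obtain $[\chi,[\chi,R_t]]=2R_t[-\Delta,\chi]R_t[-\Delta,\chi]R_t+2R_t|\nabla\chi|^2R_t$. Substituting this back, dividing by $2$, and using $[-\Delta,\chi]=-[\chi,-\Delta]$ to recognize $-\frac{\sin(\pi s)}{\pi}\int_0^\infty R_t[-\Delta,\chi]R_t[-\Delta,\chi]R_t\,t^s\,dt=L_\chi$ yields exactly the localization formula~\eqref{eq:localization}.

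It remains to prove the two-sided bound~\eqref{eq:bound_L_chi} on $L_\chi$. For nonnegativity, note that since $\chi$ is real and $-\Delta$ self-adjoint, $B:=[-\Delta,\chi]$ is skew-adjoint, i.e. $[\chi,-\Delta]=B^*$; symmetrizing, $R_tBR_tB^*R_t=R_t^{1/2}C_tC_t^*R_t^{1/2}\ge0$ with $C_t:=R_t^{1/2}BR_t^{1/2}$, and since $\sin(\pi s)>0$ for $0<s<1$ we get $L_\chi\ge0$. For the upper bound I would estimate $R_t^{1/2}C_tC_t^*R_t^{1/2}\le\|C_t\|^2R_t\le\|C_t\|^2(t+1)^{-1}$ and bound $\|C_t\|$ by writing $[-\Delta,\chi]=A^*-A$ with $A:=(\nabla\chi)\cdot\nabla$ in divergence form; this representation is essential, since $\chi$ is only $W^{1,\infty}$ and one must never let a bare $\Delta\chi$ appear. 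From $\|\nabla R_t^{1/2}\|\le1$ (the symbol being $|\xi|/\sqrt{t+1+|\xi|^2}\le1$) and $\|R_t^{1/2}\|=(t+1)^{-1/2}$ one gets $\|AR_t^{1/2}\|\le\|\nabla\chi\|_{L^\infty}$, hence $\|C_t\|\le2\|\nabla\chi\|_{L^\infty}(t+1)^{-1/2}$, and therefore $0\le L_\chi\le4\|\nabla\chi\|_{L^\infty}^2\,\frac{\sin(\pi s)}{\pi}\int_0^\infty t^s(t+1)^{-2}\,dt\cdot I=4s\,\|\nabla\chi\|_{L^\infty}^2\,I$, using the Beta-function identity $\frac{\sin(\pi s)}{\pi}\int_0^\infty t^s(t+1)^{-2}\,dt=s$ for $0<s<1$.

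The computation is essentially bookkeeping; the two points requiring care are the domain/convergence justification when commuting $\chi$ past the resolvent integral in the second step, and — more importantly — arranging every operator in the last two steps so that only $\nabla\chi$ ever enters (never $\Delta\chi$), which is exactly what the divergence-form splitting $[-\Delta,\chi]=A^*-A$ and the identity $[\chi,[-\Delta,\chi]]=2|\nabla\chi|^2$ accomplish. This is precisely the calculation carried out in~\cite{LeLe10}.
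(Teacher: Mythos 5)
Your proposal is correct and follows essentially the same route the paper indicates (and carries out in \cite{LeLe10}): the integral representation \eqref{eq:integral_formula} combined with resolvent commutator identities, the identity $[\chi,[-\Delta,\chi]]=2|\nabla\chi|^2$, the divergence-form writing $[-\Delta,\chi]=-\nabla\cdot(\nabla\chi)-(\nabla\chi)\cdot\nabla$ so that only $\nabla\chi$ appears, and the Beta integral $\frac{\sin(\pi s)}{\pi}\int_0^\infty t^s(1+t)^{-2}\,dt=s$ for the bound \eqref{eq:bound_L_chi}. All the algebra and operator estimates check out, so no gap to report.
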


If we are given a smooth partition of unity $(\chi_k)_{k \in \Z}$ such that $\sum_{k \in \Z} (\chi_k)^2\equiv1$ on $\R^d$, we obtain from~\eqref{eq:localization} that
\begin{align}
(1-\Delta)^{s}-\sum_k\chi_k(1-\Delta)^{s}\chi_k&\geq -\frac{\sin(\pi s)}\pi\int_0^\ii \frac{1}{t+1-\Delta}\sum_k|\nabla\chi_k|^2\frac{1}{t+1-\Delta}t^s\,dt\nonumber\\
&\geq -s \norm{\sum_k|\nabla\chi_k|^2}_{L^\ii(\R^d)}.\label{eq:frac_IMS}
\end{align}
In the second line, we have used that, for any $f \in L^\infty(\R^d)$, 
\begin{equation*}
\norm{\int_0^\ii \frac{1}{t+1-\Delta}f\frac{1}{t+1-\Delta}t^s\,dt}\leq \norm{f}_{L^\ii(\R^d)}\int_0^\ii\frac{t^s\,dt}{(1+t)^2}=\frac{\pi s}{\sin(\pi s)}\norm{f}_{L^\ii(\R^d)}.
\end{equation*}
The bound \eqref{eq:bound_L_chi} on $L_\chi$ is obtained in a similar manner, using that $[-\Delta,\chi]=-\nabla\cdot(\nabla\chi)-(\nabla\chi)\cdot\nabla$ holds.
Note that inequality \eqref{eq:frac_IMS} generalizes the well-known {\em IMS localization formula} (see \cite{CycFroKirSim-87}) given by
$$(-\Delta)-\sum_k\chi_k(-\Delta)\chi_k=-\sum_k|\nabla\chi_k|^2 . $$
Note that the bound \eqref{eq:frac_IMS} is indeed optimal in the limit $s\to1$.

\begin{remark} \em
Lemma~\ref{lem:localization} implies the following double commutator bound
\begin{equation}
\norm{[\chi,[\chi,(1-\Delta)^{s}]]}\leq 8s\norm{\nabla\chi}_{L^\ii(\R^d)}^2.
\label{eq:double_commutator_bound}
\end{equation}
Estimates of this form have been proved first by Coifman and Meyer in \cite{CoiMey-75}. In the zero mass case, one can easily show using our method that $[\chi,[\chi,(-\Delta)^{s}]]$ is bounded when, for instance, $\nabla\chi\in L^{d/(1-s)-\eps}(\R^d)\cap L^{d/(1-s)+\eps}(\R^d)$. 
\end{remark}

\subsection{Lack of compactness for bounded sequences in $H^s(\R^d)$}
Let $0 < s \leq 1$ be given. Suppose that $\bm{u} = \{u_n\}_{n=1}^\infty$ bounded sequence in $H^s(\R^d)$. We define
\begin{equation}
\mass \big(\bm{u}) := \sup\left\{  \int_{\R^d} |u|^2  :  \exists\{x_k\}\subset\R^3,\ u_{n_k}(\cdot-x_k)\wto u\ \text{weakly in $H^{s}(\R^d)$}\right\} ,
\end{equation}
which (intuitively speaking) corresponds to the \emph{``highest $L^2$-mass of weak limits of $\bm{u}$ up to translations and extraction of a subsequence.''} We remark that the definition of $\mass(\bm{u})$ is inspired by a seminal paper of Lieb~\cite{Lieb-83}.  Note that, by means of Rellich's compactness theorem, it can be verified that 
\begin{equation}
\mass ( \bm{u} )=\lim_{R\to\ii}\limsup_{n\to\ii} \left ( \sup_{y\in \R^d}\int_{|x-y| \leq R}|u_n(x)|^2 \right ),
\label{eq:formula_m_Lions}
\end{equation}
a formula that is closer to ideas of Lions~\cite{Lions-84a,Lions-84b}, which employs the sequence of L\'evy concentration functions $\{Q_nÊ\}_{n=1}^\infty$ given by 
$$
Q_n(R) := \sup_{y \in \R^d} \int_{|x-y|\leq R } |u_n(x)|^2 , \quad \mbox{for $R \in [0,\infty)$}.
$$
The purpose of $\mass (\bm{u} )$ is to detect the largest ``bump'' of $L^2$-mass in the sequence $\bm{u}=\{u_n\}_{n=1}^\infty$. Note that it is possible that such largest ``bump'' escapes to spatial infinity in the case when $|x_{k}|\to\ii$. 

The following result provides a useful link between $\mass (\bm{u} )$ and subcritical $L^p$ norms. For $s=1$, this is a classical result due to Lions; see \cite[Lemma I.1]{Lions-84b}. The generalization to $0 < s <1$ can be found in \cite[Lemma 2.3]{Lopes-00} and \cite[Lemma 7.2]{LeLe10}.

\begin{lemma}[A subcritical estimate involving $\mass (\bm{u})$]\label{lem:subcritical_bound}
For every bounded sequence $\bm{u}=\{u_n\}$ in $H^s(\R^d)$, it holds 
\begin{equation}
\limsup_{n\to\ii}\int_{\R^d}|u_n|^{2+\tfrac{4s}d}\leq C\;\mass(\bm{u})^{\tfrac{2s}{d}}\;\limsup_{n\to\ii}\norm{u_n}_{H^s(\R^d)}^2
\label{eq:subcritical_bound_m}
\end{equation}
where $C$ only depends on $0<s\leq1$ and $d\geq1$.
\end{lemma}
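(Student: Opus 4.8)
The plan is to reduce the estimate to the case where all the mass sits in a single cube, and then interpolate. First I would fix a tiling of $\R^d$ by half-open unit cubes $\{ Q_\ell \}_{\ell \in \Z^d}$, and write $\int_{\R^d} |u_n|^{2+4s/d} = \sum_\ell \int_{Q_\ell} |u_n|^{2+4s/d}$. On each cube I apply the Sobolev embedding $H^s(Q_\ell) \hookrightarrow L^{2+4s/d}(Q_\ell)$ (this exponent is subcritical since $2 + 4s/d < 2^*_s = 2d/(d-2s)$ for $s \leq 1$, $d \geq 1$, and one also uses that $2+4s/d$ is an admissible exponent even in low dimensions where $2^*_s = \infty$), combined with the elementary interpolation inequality
\begin{equation*}
\| f \|_{L^{2+4s/d}(Q_\ell)}^{2+4s/d} \leq C \, \| f \|_{L^2(Q_\ell)}^{4s/d} \, \| f \|_{H^s(Q_\ell)}^2 .
\end{equation*}
The point of routing through this interpolated form is that the factor $\| u_n \|_{L^2(Q_\ell)}^{4s/d}$ is uniformly bounded by $\big( \sup_\ell \| u_n \|_{L^2(Q_\ell)}^2 \big)^{2s/d}$, which can be pulled out of the sum, while $\sum_\ell \| u_n \|_{H^s(Q_\ell)}^2 \lesssim \| u_n \|_{H^s(\R^d)}^2$ up to finite overlap (here one uses a smooth partition of unity subordinate to slightly enlarged cubes together with the commutator bound of Lemma \ref{lem:commutator_bound} to control $\| \chi_\ell u_n \|_{H^s}$ in terms of $\| u_n \|_{H^s}$ locally and get the almost-orthogonality $\sum_\ell \| \chi_\ell u_n \|_{H^s}^2 \lesssim \| u_n \|_{H^s}^2$; this is exactly where the nonlocality of $(1-\Delta)^{s/2}$ must be handled carefully and is the only non-routine input). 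This yields
\begin{equation*}
\int_{\R^d} |u_n|^{2+4s/d} \leq C \Big( \sup_{\ell} \int_{Q_\ell} |u_n|^2 \Big)^{2s/d} \| u_n \|_{H^s(\R^d)}^2 .
\end{equation*}

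It then remains to pass to the $\limsup$ and identify $\limsup_n \sup_\ell \int_{Q_\ell} |u_n|^2$ with (a quantity controlled by) $\mass(\bm{u})$. By \eqref{eq:formula_m_Lions}, which expresses $\mass(\bm{u}) = \lim_{R\to\infty} \limsup_n \sup_y \int_{|x-y|\leq R} |u_n|^2$, and since any unit cube is contained in a ball of fixed radius $R_0 = \sqrt{d}/2$, we have $\sup_\ell \int_{Q_\ell} |u_n|^2 \leq \sup_y \int_{|x-y| \leq R_0} |u_n|^2$, hence $\limsup_n \sup_\ell \int_{Q_\ell} |u_n|^2 \leq \mass(\bm{u})$ (using monotonicity of the $R$-sup in $R$ and that $\mass(\bm{u})$ is the limit as $R \to \infty$ of a nondecreasing quantity whose value at $R_0$ already dominates the cube sup). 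Combining with the displayed bound and taking $\limsup_n$ of both sides gives \eqref{eq:subcritical_bound_m}.

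The main obstacle is the almost-orthogonality of the $H^s$-norm under the partition of unity: unlike the case $s=1$ where the IMS formula gives an exact local splitting of $\int ||\nabla|^{\cdot} u|^2$, for fractional $s$ one only has the localization identity of Lemma \ref{lem:localization} with the nonnegative remainder $L_\chi$ and the negative error term bounded by $s \| \sum_\ell |\nabla\chi_\ell|^2 \|_{L^\infty}$ as in \eqref{eq:frac_IMS}. For a partition of unity on unit cubes one has $\| \sum_\ell |\nabla\chi_\ell|^2 \|_{L^\infty} \lesssim 1$, so \eqref{eq:frac_IMS} gives precisely $\sum_\ell \pscal{\chi_\ell u_n, (1-\Delta)^s \chi_\ell u_n} \leq \pscal{u_n, (1-\Delta)^s u_n} + C \| u_n \|_{L^2}^2 \lesssim \| u_n \|_{H^s}^2$, which is the bound needed; this is a direct citation of the machinery in Appendix \ref{app:CC_frac_Sobolev}, so in fact the only real work is bookkeeping. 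An alternative, and arguably cleaner, route avoiding cubes entirely is to interpolate globally: $\| u_n \|_{L^{2+4s/d}}^{2+4s/d} \leq \| u_n \|_{L^2}^{4s/d} \cdot (\text{something})$ does not immediately produce the $\mass$ factor, so the localized argument above seems unavoidable; I would present it via the unit-cube decomposition as the shortest path, citing \eqref{eq:formula_m_Lions} and \eqref{eq:frac_IMS}.
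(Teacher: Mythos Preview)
Your proposal is correct and follows essentially the same route as the paper's proof: localize via a partition of unity on unit cubes, interpolate on each piece between $L^2$ and $L^{p^*}$ to produce the factor $\big(\sup_z \int_{C_z'}|u_n|^2\big)^{2s/d}$, sum the localized $H^s$-pieces using the fractional IMS bound \eqref{eq:frac_IMS}, and then identify the cube-sup with $\mass(\bm u)$ via \eqref{eq:formula_m_Lions}. The only presentational difference is that the paper works from the outset with a smooth quadratic partition $\sum_z \chi_z^2 \equiv 1$ (writing $\int |u|^q = \sum_z \int \chi_z^2 |u|^q$ and applying H\"older and Sobolev directly to $\chi_z u$), which avoids the small bookkeeping step of passing from the intrinsic norm $\|u_n\|_{H^s(Q_\ell)}$ to $\|\chi_\ell u_n\|_{H^s(\R^d)}$; but this is cosmetic, and your identification of \eqref{eq:frac_IMS} as the sole non-routine input is exactly right.
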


\begin{proof}
Here we follow \cite{Lions-84a} and \cite{LeLe10}. We pick a smooth partition of unity $\{\chi_z\}_{z\in\Z^d}$ such that $\sum_{k \in \Z} \chi_z^2 \equiv 1$ on $\R^d$, where $\chi_z$ has its support in the cube $C_z':=\prod_{j=1}^d[z_j-1,z_j+2)$ and equals 1 on the cube $C_z:=\prod_{j=1}^d[z_j,z_j+1)$. We also assume that $\sup_{z\in\Z^d}\norm{\nabla\chi_z}$ is finite, which obviously implies that $\sum_{z\in\Z^d}|\nabla\chi_z|^2\in L^\ii(\R^d)$. We now choose $2<q<\ii$ and $0<\theta<1$ such that $1/q=\theta/2+(1-\theta)/p^*$ and $(1-\theta)q=2$, which yields that $q=2+{4s}/d$. By H\"older's and Sobolev's inequality, we conclude
\begin{align*}
\int_{\R^d}|u|^q=\sum_{z\in\Z^d}\int_{\R^d}\chi_k^2|u|^q&\leq \sum_{z\in\Z^d}\norm{\1_{C'_z}u}_{L^2(\R^d)}^{\theta q}\norm{\chi_ku}_{L^{p^*}(\R^d)}^{(1-\theta)q}\\
&\leq C \sup_{z\in\Z^d}\norm{\1_{C'_z}u}_{L^2(\R^d)}^{\theta q}\pscal{u,\left(\sum_{z\in\Z^d}\chi_k(1-\Delta)^{s}\chi_k\right)u}
\end{align*}
for any $u\in H^{s}(\R^d)$.
Using the localization formula~\eqref{eq:frac_IMS}, we obtain the estimate
\begin{equation}
\int_{\R^d}|u|^{2+\tfrac{4s}d}\leq C\left(\sup_{z\in\Z^d}\int_{C_z'}|u|^2\right)^{\tfrac{2s}{d}}\norm{u}_{H^s(\R^d)}^2.
\label{eq:estim_vanishing}
\end{equation}
This yields~\eqref{eq:subcritical_bound_m}, using~\eqref{eq:formula_m_Lions}.
\end{proof}

The following consequence of Lemma~\ref{lem:subcritical_bound} allows to characterize when $\mass (\bm{u})=0$ holds, which is equivalent to saying that $u_n(\cdot-x_n)\wto0$ for all $\{x_n\}_{n=1}^\infty \subset\R^d$. 

\begin{lemma}[Vanishing]\label{lem:vanishing}
Let $0 < s \leq 1$. Suppose that $\bm{u}=\{u_n\}_{n=1}^\infty$ is a bounded sequence in $H^s(\R^d)$. Then the following statements are equivalent.

\begin{enumerate}

\item[$(i)$] $\mass (\bm{u})=0$;

\smallskip

\item[$(ii)$]  $\displaystyle\lim_{n\to\ii}\sup_{y\in\R^d}\int_{|x-y|\leq R}|u_n(x)|^2 \,dx=0$ for all $R > 0$;

\smallskip

\item[$(iii)$] $u_n\to0$ strongly in $L^p(\R^d)$ for all $2<p<p^*$, where
$p^*=2d/(d-2s)$ if $d>2s$ and $p^*=+\ii$ otherwise.
\end{enumerate}
\end{lemma}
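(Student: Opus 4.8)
The plan is to prove the chain of implications $(i) \Rightarrow (ii) \Rightarrow (iii) \Rightarrow (i)$, or rather to observe that the ring of equivalences can be closed most cheaply via $(iii) \Rightarrow (i)$ and $(ii) \Leftrightarrow (i)$, using formula \eqref{eq:formula_m_Lions} as the bridge between $(i)$ and $(ii)$. First I would recall that \eqref{eq:formula_m_Lions} reads $\mass(\bm{u}) = \lim_{R\to\ii}\limsup_{n\to\ii} Q_n(R)$ where $Q_n(R) = \sup_{y}\int_{|x-y|\leq R}|u_n|^2$, so that $(i)$ says this double limit is zero. Since $Q_n(R)$ is nondecreasing in $R$, $\mass(\bm{u})=0$ forces $\limsup_{n\to\ii} Q_n(R) = 0$ for every fixed $R>0$, which is precisely $(ii)$; conversely $(ii)$ trivially implies the double limit vanishes. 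So $(i) \Leftrightarrow (ii)$ is immediate from \eqref{eq:formula_m_Lions}, and the only content is relating these to the strong $L^p$ convergence in $(iii)$.

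For $(i) \Rightarrow (iii)$ I would invoke Lemma~\ref{lem:subcritical_bound}: if $\mass(\bm{u})=0$ then $\limsup_{n\to\ii}\|u_n\|_{L^{2+4s/d}}^{2+4s/d} \leq C\,\mass(\bm{u})^{2s/d}\,\limsup_n\|u_n\|_{H^s}^2 = 0$, so $u_n\to 0$ strongly in $L^{q_0}$ with $q_0 = 2+4s/d$. Then for any $p$ with $2<p<p^*$, interpolate: if $p\leq q_0$ write $\|u_n\|_{L^p} \leq \|u_n\|_{L^2}^{1-\theta}\|u_n\|_{L^{q_0}}^{\theta}$ with the obvious $\theta\in(0,1)$ and use boundedness of $\|u_n\|_{L^2}$ (from the $H^s$ bound); if $q_0 < p < p^*$, interpolate instead between $L^{q_0}$ and $L^{p^*}$, using that $\|u_n\|_{L^{p^*}}$ is bounded by the Sobolev embedding $H^s(\R^d)\hookrightarrow L^{p^*}(\R^d)$ (when $d>2s$; when $d\leq 2s$ every $L^p$, $p<\ii$, is controlled by interpolating $L^2$ against a high-exponent Sobolev norm). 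Either way $\|u_n\|_{L^p}\to 0$.

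Finally $(iii) \Rightarrow (i)$ is a contrapositive argument: suppose $\mass(\bm{u}) > 0$. Then by definition of $\mass(\bm{u})$ there is a sequence $\{x_k\}$ and a subsequence with $u_{n_k}(\cdot - x_k)\weakto v$ weakly in $H^s(\R^d)$ and $\int|v|^2 = \mass(\bm{u}) > 0$ (the supremum is attained by a standard diagonal/weak-compactness argument, or one simply picks $v$ with $\int|v|^2 > 0$). By Rellich's theorem the translates converge strongly in $L^p_{\rm loc}$ for $2\le p<p^*$, hence $v\not\equiv 0$ forces $\liminf_k\|u_{n_k}(\cdot-x_k)\|_{L^p(B_R)} = \|v\|_{L^p(B_R)} > 0$ for $R$ large, and by translation invariance of the $L^p$ norm this contradicts $u_n\to 0$ in $L^p(\R^d)$. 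This closes the loop. The only mildly delicate point is making sure the Sobolev embedding is available in the borderline dimensions $d\le 2s$ and that the interpolation covers the whole range $2<p<p^*$; there is no real obstacle, just bookkeeping with Hölder exponents.
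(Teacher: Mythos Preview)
Your proposal is correct and matches the paper's proof essentially step for step: $(i)\Leftrightarrow(ii)$ via \eqref{eq:formula_m_Lions}, $(ii)\Rightarrow(iii)$ via Lemma~\ref{lem:subcritical_bound} plus interpolation with the Sobolev embedding, and $(iii)\Rightarrow(i)$ by translation invariance of the $L^p$ norm. The only minor difference is that for $(iii)\Rightarrow(i)$ the paper skips Rellich entirely and just observes that $u_n\to 0$ in $L^p$ forces $u_{n_k}(\cdot-x_k)\to 0$ in $L^p$ and hence any weak $H^s$-limit (which lies in $L^p$) must vanish; your contrapositive via local compactness is a slight over-elaboration but perfectly fine.
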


\begin{proof}
The equivalence $(i) \Leftrightarrow (ii)$ follows from~\eqref{eq:formula_m_Lions} and it is obvious that $(iii)$ implies $(i)$, since $u_n \to 0$ strongly in $L^p(\R^d)$ implies that $u_n(\cdot-x_n) \to 0$ strongly in $L^p(\R^d)$ for any sequence $\{x_n\}_{n=1}^\infty \subset\R^d$. The implication $(ii) \Rightarrow (iii)$ follows from Lemma~\ref{lem:subcritical_bound}: We have $u_n\to0$ in $L^{2+4s/d}(\R^d)$, hence in $L^p(\R^d)$ for all $2<p<p^*$ by the Sobolev inequality and by interpolation.
\end{proof}

The following lemma provides an easy way to extract the locally compact part of a weakly convergent sequence $\bm{u} = \{ u_n \}_{n=1}^\infty$ in $H^s(\R^d)$ and to isolate it from the rest.

\begin{lemma}[Extracting the locally convergent part]\label{lem:dichotomy}
Let $0 < s \leq 1$. Suppose that $\bm{u} = \{u_n\}_{n=1}^\infty \subset H^s(\R^d)$ satisfies $u_n\wto u$ weakly in $H^s(\R^d)$. Consider any sequence of positive real numbers $R_k\to\ii$.
Then there exists two bounded sequences $\bm{u'} = \{u'_k\}$ and $\bm{r}=\{r_k\}$ in $H^s(\R^d)$ and a subsequence $\{u_{n_k}\}$ such that:
\begin{enumerate}
 \item[$(i)$] $\mathrm{supp}(u'_k) \subset B(0,R_k)$ for all $k \in \N$ and $u'_k \wto u$ weakly in $H^s(\R^d)$ and $$
 \mbox{$u_k' \to u$ strongly in $L^p(\R^d)$}
 $$
 for all $2\leq p<p^*$;

\smallskip

\item[$(ii)$] $\mathrm{supp}(r_k)  \subset \R^d \setminus B(0,2R_k)$ and 
$$\mass (\bm{r} ) \leq \mass (\{u_{n_k} \}_{k=1}^\infty)\leq \mass (\bm{u} );$$

\smallskip

\item[$(iii)$] $\lim_{k\to\ii}\norm{u_{n_k}-u'_k-r_k}_{H^s(\R^d)}=0$;

\smallskip

\item[$(iv)$] $\displaystyle\lim_{k\to\ii}\bigg(\pscal{u_{n_k},(a-\Delta)^{s}u_{n_k}}-\pscal{u'_{k},(a-\Delta)^{s}u'_{k}}-\pscal{r_{k},(a-\Delta)^{s}r_{k}}\bigg)=0$ for any $a\geq0$.
\end{enumerate}
\end{lemma}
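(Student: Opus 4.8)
The plan is to decompose $u_n$ with a smooth partition of unity adapted to the scale $R_k$, leaving a \emph{buffer annulus} that escapes to infinity so that the inner and outer pieces genuinely decouple. Fix once and for all three smooth nonnegative functions $\chi_0,\eta_0,\psi_0$ on $\R^d$ with $\chi_0^2+\eta_0^2+\psi_0^2\equiv1$, where $\chi_0\equiv1$ on $B(0,1/2)$ and $\mathrm{supp}\,\chi_0\subset B(0,1)$, $\psi_0\equiv1$ on $\R^d\setminus B(0,4)$ and $\mathrm{supp}\,\psi_0\subset\R^d\setminus B(0,2)$ (so $\chi_0\psi_0\equiv0$ and $\eta_0$ lives in $\{1/2\le|x|\le4\}$). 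Set $\chi_k:=\chi_0(\cdot/R_k)$, $\eta_k:=\eta_0(\cdot/R_k)$, $\psi_k:=\psi_0(\cdot/R_k)$, so that $\|\nabla\chi_k\|_{L^\ii}+\|\nabla\eta_k\|_{L^\ii}+\|\nabla\psi_k\|_{L^\ii}\lesssim R_k^{-1}$ and the ``transition'' functions $\chi_k(1-\chi_k)$, $\eta_k^2$, $\psi_k(1-\psi_k)$ are all supported in the buffer $A_k:=\{R_k/2\le|x|\le4R_k\}$. Finally define $u_k':=\chi_k u_{n_k}$ and $r_k:=\psi_k u_{n_k}$; the support statements in $(i)$ and $(ii)$ then hold by construction.

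The heart of the matter is the choice of the subsequence, and here I would first isolate the following elementary fact: \emph{if $(g_n)$ is bounded in $L^2(\R^d)$ and $\rho_k\to\ii$, then some subsequence satisfies $\int_{\{\rho_k\le|x|\le8\rho_k\}}|g_{n_k}|^2\to0$.} (Extract once so that $\rho\mapsto\int_{|x|\le\rho}|g_n|^2$ converges along all dyadic radii to a bounded nondecreasing limit $\phi$; then $\phi(2^{j+1})-\phi(2^j)\to0$, so $\limsup_n\int_{\{\rho_k\le|x|\le8\rho_k\}}|g_n|^2$ is dominated by a dyadic increment of $\phi$ that vanishes as $k\to\ii$, and one more diagonalisation in $k$ concludes.) This is exactly where the hypothesis that $\{u_n\}$ is bounded in $H^s$ is used: applying the fact with $g_n:=(1-\Delta)^{s/2}u_n$ yields a subsequence with $\|(1-\Delta)^{s/2}u_{n_k}\|_{L^2(A_k)}\to0$ --- a statement that does \emph{not} follow from Rellich's theorem, since $\{(1-\Delta)^{s/2}u_n\}$ is only $L^2$-bounded and enjoys no local compactness. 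Passing to a further subsequence (using that $u_n\to u$ in $L^2_{\mathrm{loc}}$ by Rellich) I would also arrange $\|u_{n_k}-u\|_{L^2(B(0,2R_k))}\to0$; since both requirements merely ask $n_k$ to be large for the given index $k$, this is legitimate.

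Granting the extraction, $(i)$--$(iii)$ follow mechanically. For $(i)$, write $u_k'-u=(\chi_k-1)u+\chi_k(u_{n_k}-u)$: the first term $\to0$ in $H^s$ by the commutator bound of Lemma~\ref{lem:commutator_bound} and $u\in H^s$, and the second is $H^s$-bounded, supported in $B(0,R_k)$, and $\to0$ in $L^2$ by the extraction, hence $\to0$ in $L^p$ for $2\le p<p^*$ after interpolating with $H^s\hookrightarrow L^{p^*}$; together this gives $u_k'\weakto u$ in $H^s$ and $u_k'\to u$ in $L^p$. For $(ii)$, since $|r_k|\le|u_{n_k}|$ pointwise, the formula \eqref{eq:formula_m_Lions} for $\mass$ immediately yields $\mass(\bm r)\le\mass(\{u_{n_k}\})\le\mass(\bm u)$. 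For $(iii)$, we have $u_{n_k}-u_k'-r_k=-\chi_k(1-\chi_k)u_{n_k}+\eta_k^2u_{n_k}-\psi_k(1-\psi_k)u_{n_k}$; each summand is of the form $\vartheta_k u_{n_k}$ with $\vartheta_k$ supported in $A_k$, $\|\vartheta_k\|_{L^\ii}\lesssim1$, $\|\nabla\vartheta_k\|_{L^\ii}\lesssim R_k^{-1}$, and from $(1-\Delta)^{s/2}(\vartheta_k u_{n_k})=\vartheta_k(1-\Delta)^{s/2}u_{n_k}+[(1-\Delta)^{s/2},\vartheta_k]u_{n_k}$ the commutator contributes $O(R_k^{-1})$ (Lemma~\ref{lem:commutator_bound}) and the first term has $L^2$-norm $\le\|(1-\Delta)^{s/2}u_{n_k}\|_{L^2(A_k)}\to0$; hence the error $\to0$ in $H^s$.

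It remains to verify $(iv)$. Applying the localisation formula of Lemma~\ref{lem:localization} --- and its straightforward analogue for $(a-\Delta)^s$ with $a\ge0$ --- to the partition $\chi_k^2+\eta_k^2+\psi_k^2\equiv1$, one gets $\pscal{u_{n_k},(a-\Delta)^s u_{n_k}}=\sum_{\vartheta\in\{\chi_k,\eta_k,\psi_k\}}\pscal{\vartheta u_{n_k},(a-\Delta)^s\vartheta u_{n_k}}+O(R_k^{-2})$, the error being controlled by $\lesssim_a(\|\nabla\chi_k\|_{L^\ii}^2+\|\nabla\eta_k\|_{L^\ii}^2+\|\nabla\psi_k\|_{L^\ii}^2)\|u_{n_k}\|_{L^2}^2$. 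The $\eta_k$-term equals $\|(a-\Delta)^{s/2}(\eta_k u_{n_k})\|_{L^2}^2$, which tends to $0$: indeed $\eta_k u_{n_k}\to0$ in $H^s$ exactly as in $(iii)$, and the Fourier multiplier $(a-\Delta)^{s/2}(1-\Delta)^{-s/2}$ is bounded on $L^2$, so the same subsequence works for every $a\ge0$. The only genuinely delicate step is thus the extraction that kills the buffer contribution $\|(1-\Delta)^{s/2}u_{n_k}\|_{L^2(A_k)}$; everything afterwards is bookkeeping with the commutator and localisation estimates already established, together with Rellich's theorem and \eqref{eq:formula_m_Lions}.
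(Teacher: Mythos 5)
Your proposal is correct and follows essentially the same route as the paper: the same cutoff decomposition $u'_k=\chi_{R_k}u_{n_k}$, $r_k=\psi_{R_k}u_{n_k}$, the commutator bound of Lemma~\ref{lem:commutator_bound} for $(iii)$, and the localization formula of Lemma~\ref{lem:localization} (with the $\max(a,1)^s$ comparison) for $(iv)$. Your dyadic-annulus extraction killing $\|(1-\Delta)^{s/2}u_{n_k}\|_{L^2(A_k)}$ together with Rellich on growing balls is just an explicit rendering of the paper's ``two concentration functions as in Lions'' step, so the arguments coincide in substance.
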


Our assertion $(iii)$ together with the support properties of $u'_k$ and $r_k$ imply that $\1_{B(0,R_k)}u_k\to u$, that $\1_{B(0,2R_k)\setminus B(0,R_k)}u_k\to 0$, and that $\1_{\R^d\setminus B(0,2R_k)}u_k-r_k\to 0$ strongly in $L^2(\R^d)$. Also, we have
\begin{equation}
\lim_{k\to\ii}\left(\int_{\R^d}|u_{n_k}|^2-\int_{\R^d}|r_{k}|^2\right)=\int_{\R^d}|u'|^2.
\label{eq:limit_mass} 
\end{equation}

\begin{proof}[Proof of Lemma~\ref{lem:dichotomy}] 
Using two concentration functions as in \cite{Lions-84a}, we find a subsequence $\{u_{n_k}\}$ such that
\begin{equation}
\lim_{k\to\ii}\int_{B(0,R_k/2)}|u_{n_k}|^2=\int_{\R^d}|u|^2
\label{eq:concentration1} 
\end{equation}
\begin{equation}
\lim_{k\to\ii}\int_{B(0,4R_k)\setminus B(0,R_k/2)}|u_{n_k}|^2+|(1-\Delta)^{\frac{s}{2}} u_{n_k}|^2=0.
\label{eq:concentration2} 
\end{equation}
Let $0\leq\chi\leq1$ be a smooth localization function which is equal to 1 on the ball $B(0,1/2)$ and vanishes outside the ball $B(0,1)$. Similarly, let $0\leq\eta\leq1$ be a smooth function which is equal to 1 outside the ball $B(0,4)$ and vanishes in $B(0,2)$. Define then $\chi_R:=\chi(\cdot/R)$ and $\eta_R:=\eta(\cdot/R)$ for $R >0$. 

We now introduce $u'_k:=\chi_{R_k}u_{n_k}$ and $r_k:=\eta_{R_k}u_{n_k}$. Using~\eqref{eq:commutator_bound}, we easily see that $u'_k$ and $r_k$ are bounded in $H^s(\R^d)$, and that $u'_k\to u$ strongly in $L^2(\R^d)$ and weakly in $H^s(\R^d)$. The strong convergence in $L^2(\R^d)$ follows from \eqref{eq:concentration1}. We have
\begin{align*}
\norm{u_{n_k}-u'_k-r_k}_{H^{s}(\R^d)}&= \norm{(1-\Delta)^{s/2}(1-\chi_{R_k}-\eta_{R_k})u_{n_k}}_{L^2(\R^d)}\\
&\leq \norm{(1-\chi_{R_k}-\eta_{R_k})(1-\Delta)^{s/2}u_{n_k}}_{L^2(\R^d)}\\
&\qquad\qquad\qquad+\norm{\big[(1-\Delta)^{s/2},1-\chi_{R_k}-\eta_{R_k}\big]u_{n_k}}_{L^2(\R^d)}\\
&\leq \norm{(1-\chi_{R_k}-\eta_{R_k})(1-\Delta)^{s/2}u_{n_k}}_{L^2(\R^d)}+\frac{C}{(R_k)^2}\norm{u_{n_k}}_{L^2(\R^d)} 
\end{align*}
where in the last line we have used the commutator bound~\eqref{eq:commutator_bound}.
The function $1-\chi_{R_k}-\eta_{R_k}$ has its support in the annulus $B(0,4R_k)\setminus B(0,R_k/2)$, and hence we get $u_{n_k}-u'_k-r_k\to0$ in $H^{s}(\R^d)$, by \eqref{eq:concentration2}. 

Suppose now that $f(x)$ is a sufficiently smooth function on $\R^d$. For $a>0$, it follows from~\eqref{eq:commutator_bound} that
\begin{equation}
\norm{\left[f_R,(a-\Delta)^s\right]}=O(R^{-1})
\label{eq:commutator_bound_scaling1}
\end{equation}
with, as before, $f_R(x)=f(x/R)$ with $R>0$. In the case when $a=0$, we have, by Remark \ref{rem:comm} and scaling, the following estimate
\begin{equation}
\norm{\left[f_R,(-\Delta)^s\right]}=R^{-s}\norm{\left[f,(-\Delta)^s\right]}=O(R^{-s}).
\label{eq:commutator_bound_scaling2}
\end{equation}
Similarly, we get
\begin{equation}
\norm{\left[f_R,\left[f_R,(-\Delta)^s\right]\right]}=O(R^{-2s}).
\label{eq:doublecommutator_bound_scaling2}
\end{equation}
Using now \eqref{eq:localization} when $a>0$ and~\eqref{eq:doublecommutator_bound_scaling2} when $a=0$, we find that
\begin{multline*}
\lim_{k\to0}\bigg\|(a-\Delta)^s-\chi_{R_k}(a-\Delta)^s\chi_{R_k}-\eta_{R_k}(a-\Delta)^s\eta_{R_k}\\
-\sqrt{1-\chi_{R_k}^2-\eta_{R_k}^2}(a-\Delta)^s\sqrt{1-\chi_{R_k}^2-\eta_{R_k}^2}\bigg\|=0.
\end{multline*}
Therefore,
\begin{multline*}
\pscal{u_{n_k},(a-\Delta)^{s}u_{n_k}}-\pscal{u'_{k},(a-\Delta)^{s}u'_{k}}-\pscal{r_{k},(a-\Delta)^{s}r_{k}}\\
=\norm{(a-\Delta)^{s/2}\sqrt{1-\chi_{R_k}^2-\eta_{R_k}^2}u_{n_k}}^2+o(1).
\end{multline*}
Next, we note that
\begin{equation*}
\norm{(a-\Delta)^{s/2}\sqrt{1-\chi_{R_k}^2-\eta_{R_k}^2}u_{n_k}}^2\leq \max(a,1)^s\norm{(1-\Delta)^{s/2}\sqrt{1-\chi_{R_k}^2-\eta_{R_k}^2}u_{n_k}}^2.
\end{equation*}
The term on the right side tends to zero by \eqref{eq:concentration2}, since $1-\chi_{R_k}^2-\eta_{R_k}^2$ has its support in the annulus $B(0,4R_k)\setminus B(0,R_k/2)$, and by the commutator estimate \eqref{eq:commutator_bound_scaling1}. The proof of Lemma~\ref{lem:dichotomy} is now complete. \end{proof}

From Lemma \ref{lem:dichotomy}, we deduce the following decomposition result for bounded sequences in $H^s(\R^d)$.

\begin{lemma}[Splitting of bounded sequences  in $H^s(\R^d)$]\label{lem:splitting}
Let $0 < s \leq 1$. Suppose that $\bm{u} = \{u_n\}_{n=1}^\infty$ is a bounded sequence in $H^s(\R^d)$. Then there exists a family of sequences $\{ \bm{u}^j \}_{j\geq 1} =  \{ u^j_k \}_{j \geq 1, k \geq 1}$ in $H^s(\R^d)$ such that the following holds. For any fixed $\eps>0$, the following exist:
\begin{itemize}
\item an integer $J\geq0$;
\item $J$ bounded sequences $\bm{u}^j=\{u^j_k\}_{k\geq1}$ in $H^s(\R^d)$ with $j=1,...,J$;
\item a bounded sequence $\bm{r}^J = \{r_k^{J}\}_{k\geq1}$ in $H^s(\R^d)$;
\item $J$ sequences of vectors $\{x^j_k\}_{k\geq1}$ in $R^d$, with $j=1,...,J$;
\item a sequence of positive real numbers $R_k\to +\ii$;
\end{itemize}
and a subsequence $\{u_{n_k}\}$ of $\bm{u} = \{ u_n \}_{n=1}^\infty$ such that:

\begin{enumerate}
 \item [$(i)$] For every $j=1,...,J$ and $k\geq1$, we have $\mathrm{supp}(u^j_k) \subset B(0,R_k)$ and  $u^j_k \weakto u^j$ in $H^s(\R^d)$ and $u^j_k \to u^j$ strongly in $L^p(\R^d)$ for all $2\leq p<p^*$;

\smallskip

\item[$(ii)$] For every $j=1,...,J$ and $k\geq1$, we have $\mathrm{supp} ( r_k^{J}) \subset \R^d\setminus\cup_{j=1}^JB(x_k^j,2R_k)$ and
$$\mass ( \bm{r}^J ) \leq \eps;$$

\smallskip

\item[$(iii)$] $\displaystyle\lim_{k\to\ii}\norm{u_{n_k}-\sum_{j=1}^Ju^j_k(\cdot-x_k^j)-r_k^J}_{H^s(\R^d)}=0$;

\smallskip

\item[$(iv)$] $\displaystyle\lim_{k\to\ii}\bigg(\pscal{u_{n_k},(a-\Delta)^{s}u_{n_k}}-\sum_{j=1}^J\pscal{u^j_{k},(a-\Delta)^{s}u^j_{k}}-\pscal{r^J_{k},(a-\Delta)^{s}r_{k}^J}\bigg)=0$ for any $a\geq0$;

\smallskip

\item[$(v)$] $\displaystyle|x_k^j-x_k^{j'}|\geq 5R_k$ for all $k\geq1$ and $j\neq j'$.
\end{enumerate}
\end{lemma}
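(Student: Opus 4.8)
The plan is to deduce Lemma~\ref{lem:splitting} by iterating Lemma~\ref{lem:dichotomy}, peeling off one ``bump'' at a time. Set $C_0:=\sup_n\norm{u_n}^2_{H^s}<\ii$ and recall $\mass(\bm w)\leq\limsup_n\norm{w_n}^2_{L^2}$ for any bounded sequence. Put $\bm r^0:=\bm u$. Given a bounded sequence $\bm r^{j-1}=\{r^{j-1}_k\}$, if $\mass(\bm r^{j-1})\leq\eps$ we stop and set $J:=j-1$; otherwise, by definition of $\mass$ there are translations $\{x^j_k\}$ and a subsequence along which $r^{j-1}_{n_k}(\cdot-x^j_k)\wto u^j$ weakly in $H^s$ with $\int|u^j|^2>\eps$, so in particular $u^j\neq0$. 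I then apply Lemma~\ref{lem:dichotomy} to the weakly convergent sequence $\{r^{j-1}_{n_k}(\cdot-x^j_k)\}$, with a cut radius $R^{(j)}_k\to+\ii$ to be fixed later, obtaining a localized piece $u^j_k$ (supported in $B(0,R^{(j)}_k)$, with $u^j_k\wto u^j$ and $u^j_k\to u^j$ strongly in $L^p$ for $2\leq p<p^*$) and a far-away remainder; shifting everything back by $+x^j_k$ defines $\bm r^j$. Since $r^{j-1}_k$ already vanishes near $x^1_k,\dots,x^{j-1}_k$ on balls of radius $2R^{(i)}_k$, the output of Lemma~\ref{lem:dichotomy} is a multiplier applied to this sequence and hence inherits those vanishing properties, giving $\mathrm{supp}(r^j_k)\subset\R^d\setminus\bigcup_{i\leq j}B(x^i_k,2R^{(i)}_k)$.

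The iteration terminates after finitely many steps: formula~\eqref{eq:limit_mass} applied at the $j$-th step gives $\lim_k\big(\int|r^{j-1}_{n_k}|^2-\int|r^j_k|^2\big)=\int|u^j|^2>\eps$, hence $\limsup_k\int|r^j_k|^2\leq C_0-j\eps$, and since $\mass(\bm r^j)\leq\limsup_k\int|r^j_k|^2$ this forces $\mass(\bm r^j)\leq\eps$ for some $j\leq\lceil C_0/\eps\rceil$. To make the family $\{\bm u^j\}_{j\geq1}$ independent of $\eps$ as in the statement, one instead runs the peeling as long as $\mass(\bm r^{j-1})>0$ — possibly countably often, with $\sum_j\int|u^j|^2\leq C_0$ and, by the same mass-budget estimate, $\mass(\bm r^j)\to0$ — and only afterwards, given $\eps$, truncates at the first $J$ with $\mass(\bm r^J)\leq\eps$; a diagonal subsequence handles the countably many extractions.

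It remains to fix the cut radii. A nonzero weak limit cannot concentrate on a region where $r^{j-1}_k$ vanishes, and that region contains $\bigcup_{i<j}B(x^i_k,2R^{(i)}_k)$ with $R^{(i)}_k\to\ii$; arguing by contradiction along a subsequence one obtains $|x^i_k-x^j_k|\to+\ii$ for all $i\neq j$. Because $J$ is finite, one may then choose a single sequence $R_k\to+\ii$ that is simultaneously admissible as the cut radius at every step $j=1,\dots,J$ (the cutting in Lemma~\ref{lem:dichotomy} is valid for any radius going to infinity slowly enough) and satisfies $5R_k\leq\min_{i\neq j}|x^i_k-x^j_k|$. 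Running the iteration with $R^{(j)}_k\equiv R_k$ then yields $(i)$, $(ii)$, $(v)$ at once (the balls $B(x^j_k,2R_k)$ being pairwise disjoint by $(v)$). For $(iii)$, telescoping $r^{j-1}_{n_k}-u^j_k(\cdot-x^j_k)-r^j_k\to0$ in $H^s$ over $j=1,\dots,J$ gives $u_{n_k}-\sum_{j}u^j_k(\cdot-x^j_k)-r^J_k\to0$ in $H^s$. For $(iv)$, one telescopes part $(iv)$ of Lemma~\ref{lem:dichotomy}, using that $(a-\Delta)^s$ commutes with translations so that $\pscal{u^j_k(\cdot-x^j_k),(a-\Delta)^su^j_k(\cdot-x^j_k)}=\pscal{u^j_k,(a-\Delta)^su^j_k}$.

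The main obstacle is the interplay of the three objects being built — the cut radii $R_k$, the translations $x^j_k$, and the nested subsequences — which must be produced in a consistent order: the $x^j_k$ depend on the subsequences, the usable range of $R_k$ depends on the growth of $\min_{i\neq j}|x^i_k-x^j_k|$, and the admissibility of the cuts of Lemma~\ref{lem:dichotomy} forces $R_k$ to grow slowly. Setting up a single diagonal extraction that makes $R_k\to\ii$ while keeping $5R_k\leq\min_{i\neq j}|x^i_k-x^j_k|$ and preserving every convergence of Lemma~\ref{lem:dichotomy} is the only genuinely delicate point; the algebraic identities $(iii)$–$(iv)$ and the support bookkeeping are then routine.
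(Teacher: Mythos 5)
Your route is the same as the paper's: peel off one bump at a time by iterating Lemma~\ref{lem:dichotomy}, control the number of steps by an $L^2$-mass budget, obtain $(ii)$ because the remainder produced by Lemma~\ref{lem:dichotomy} is a multiplier applied to the previous remainder, obtain $(v)$ from the vanishing of the remainders on the earlier balls, and telescope for $(iii)$--$(iv)$. The genuine gap lies in the step you add to make the family $\{\bm{u}^j\}_{j\geq1}$ independent of $\eps$ (this independence is part of the statement and is what Step 3 of the main proof uses when it says that $\eps$ can be decreased ``without changing the functions $v^j$''). You propose to run the peeling as long as $\mass(\bm{r}^{j-1})>0$, extracting at each step an arbitrary nonzero weak limit, and you assert that $\mass(\bm{r}^j)\to0$ ``by the same mass-budget estimate''. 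But that estimate used the threshold $\int_{\R^d}|u^j|^2>\eps$; once the threshold is dropped, the budget only gives $\sum_j\int_{\R^d}|u^j|^2<\ii$, hence $\int_{\R^d}|u^j|^2\to0$, which says nothing about $\mass(\bm{r}^j)$: nothing prevents the procedure from extracting smaller and smaller bumps while $\mass(\bm{r}^j)$ stays bounded away from zero. The missing ingredient is a near-maximal selection rule, which is exactly what the paper imposes in \eqref{eq:choice_mass} (each new profile captures at least half of the current $\mass$); combined with the mass identity \eqref{eq:limit_mass}, the convergence of the series then forces $\mass(\bm{r}^J)\to0$, and for a given $\eps$ one simply truncates at the first $J$ with $\mass(\bm{r}^J)\leq\eps$, the functions $u^j$ being unchanged as $\eps$ decreases.

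Concerning the point you yourself flag as delicate: as written, ``choose a single $R_k$ with $5R_k\leq\min_{i\neq j}|x^i_k-x^j_k|$ and run the iteration with $R^{(j)}_k\equiv R_k$'' is circular, because the translations $x^j_k$ (hence the admissible $R_k$) are only known after the iteration has been run, while re-running the cuts with new radii changes the remainders $\bm{r}^j$ and therefore, a priori, the subsequences and translations extracted at later steps. This can be repaired, for instance by a two-pass argument in which the profiles, translations and their mutual divergence are identified first with provisional radii, and the final cutoffs are then performed with one common, slowly growing $R_k$ after a further extraction making the relevant annuli carry vanishing $H^s$-mass; but your proposal does not carry this out. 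To be fair, the paper leaves this same bookkeeping to the reader, so I would count it as an unfinished step rather than a wrong one; the selection-rule issue of the previous paragraph is the place where your argument, as stated, actually fails to deliver the statement.
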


Note that in $(iv)$ one can use the strong convergence of $u^j_k\to u^j$ to replace $u^j_k(\cdot-x_k^j)$ with $u^j(\cdot-x_k^j)$ in $(iv)$, at the expense of replacing the $H^s(\R^d)$ norm by any $L^p(\R^d)$ norm with $2\leq p<p^*$. Similarly, we can use Fatou's lemma to replace $(iv)$ with
$$\liminf_{k\to\ii}\bigg\{\pscal{u_{n_k},(a-\Delta)^{s}u_{n_k}} - \pscal{r^J_{k},(a-\Delta)^{s}r_{k}^J}\bigg\}\geq \sum_{j=1}^J\pscal{u^j,(a-\Delta)^{s}u^j}.$$
It is therefore possible to state a weaker result, which does not involve the functions $u_k^j$. However, strong convergence in $H^s(\R^d)$ is sometimes useful in practice and we prefer the previous version.

Results in the spirit of Lemma \ref{lem:splitting} are ubiquitous in the literature (see, e.g., \cite{Struwe-84,BreCor-85,Lions-87,Gerard-98} and~\cite{BahGer-99,MerVeg-98,Keraani-01} in the time-dependent case). The proof is based on Lemma \ref{lem:dichotomy} and it proceeds induction.

Let us briefly sketch the arguments here. If $\mass (\bm{u}) = 0$, there is nothing to prove by Lemma~\ref{lem:vanishing}. If $\mass (\bm{u}))>0$, we choose a first limit $u^1$ up to translations and extraction of a subsequence (note that the mass of $u^1$ can be chosen as close to $\mass(\bm{u})$ as needed). Next we use Lemma~\ref{lem:dichotomy} to construct $u^1_k$ and $r^1_k$. Then we apply the whole process again to the sequence $\bm{r}^1 = \{r^1_k\}_{k=1}^\infty$, and so on. At each step, we can choose the limit $u^{J+1}$ such as to have, for instance,
\begin{equation}
\frac{\mass(\bm{r^J})}{2} \leq\int_{\R^d}|u^{J+1}|^2\leq \mass(\bm{r}^J).
\label{eq:choice_mass} 
\end{equation}
The family of sequences given by $\bm{u}^j = \{u^j\}_{j\geq 1}$ satisfies, for every $J$, the relation
$$\lim_{k\to\ii}\left(\int_{\R^d}|u_{n_k}|^2-\int_{\R^d}|r_{k}^J|^2\right)=\sum_{j=1}^J\int_{\R^d}|u^j|^2,$$
similarly to~\eqref{eq:limit_mass}. In particular, this implies that
$$\sum_{j=1}^J\int_{\R^d}|u^j|^2\leq \limsup_{n\to\ii}\int_{\R^d}|u_n|^2 < +\infty.$$
Thus the series $\sum_{j}\int_{\R^d}|u^j|^2$ must be convergent, and hence $\lim_{j\to\ii}\int_{\R^d}|u^j|^2=0$. By~\eqref{eq:choice_mass}, this proves the assertion that $\mass(\bm{r}^J)$ can be made arbitrarily small. The other technical details are left to the reader.

\bibliographystyle{siam}
\bibliography{MyBib}

\end{document}